\documentclass[reqno,11pt]{amsart}
\usepackage{amssymb}
\usepackage{amsmath}
\usepackage{amsthm}
\usepackage{mathrsfs}
\usepackage{mathtools}
\usepackage{enumitem}
\usepackage{amsfonts}
\usepackage{xcolor}
\usepackage{graphicx}
\usepackage{tikz}
\usepackage{comment}
\usepackage{microtype}
\usepackage[
    colorlinks = true, 
    linkcolor={blue},
	citecolor={blue},
	urlcolor={black!30!blue}
]{hyperref}

\title[Admissible Fourier lengths and KAM reducibility]
{Admissible Fourier Lengths, KAM Reducibility, and Spectral Applications}

\date{\today}

\author[X.\ Wang]{Xueyin Wang}
\address{[X.\ Wang] Department of Mathematics, Texas A\&M University, College Station, TX 77843, USA}
\email{\href{mailto:xueyin@tamu.edu}{xueyin@tamu.edu}}

\author[J.\ You]{Jiangong You}
\address{[J.\ You] Chern Institute of Mathematics and LPMC, Nankai University, Tianjin 300071, China}
\email{\href{mailto:jyou@nankai.edu.cn}{jyou@nankai.edu.cn}}

\theoremstyle{plain}
\newtheorem{theorem}{Theorem}[section]
\newtheorem{corollary}[theorem]{Corollary}
\newtheorem{lemma}[theorem]{Lemma}
\newtheorem{proposition}[theorem]{Proposition}

\theoremstyle{definition}
\newtheorem{definition}[theorem]{Definition}

\newtheorem{problem}[theorem]{Problem}

\numberwithin{equation}{section}

\begin{document}
\begin{abstract}
We develop a perturbative KAM reducibility theory for one-frequency $\mathrm{SL}(2,\mathbb{R})$ cocycles based on an admissible Fourier length $\ell$. The regularity relevant to the iteration is measured by positive adapted Fourier width rather than ordinary smoothness in the Euclidean length $|n|$. The same length governs Fourier decay, truncation and resonance scales, and the arithmetic condition controlling the small divisors. This framework contains the classical analytic and Gevrey settings, while non-monotone choices of $\ell$ allow classical nowhere differentiable Weierstrass-type perturbations and continuous perturbations outside every positive H\"older class. As spectral applications, we obtain purely absolutely continuous spectrum for every phase and $1/2$-H\"older continuity of the integrated density of states for the associated quasiperiodic Schr\"odinger operators. The Aubry dual has pure point spectrum for Lebesgue almost every dual phase, with eigenfunctions exponentially localized in the metric induced by $\ell$. We also construct nowhere differentiable quasiperiodic potentials with purely absolutely continuous Cantor spectrum.
\end{abstract}

\maketitle
\setcounter{tocdepth}{1}
\tableofcontents

\section{Introduction}
KAM reducibility is a basic mechanism connecting quasiperiodic dynamics with the spectral theory of quasiperiodic Schr\"odinger operators. In the perturbative analytic setting, reducibility of the Schr\"odinger cocycle produces Bloch waves and leads to purely absolutely continuous spectrum. When exact reducibility is obstructed by infinitely many resonances, quantitative almost reducibility can provide the estimates needed for the same spectral conclusion.  Beyond the analytic setting \cite{MR470318,MR2969277,MR1167299,MR2846380}, perturbative reducibility and spectral results have been extended to Gevrey, ultradifferentiable, and sufficiently smooth classes, with arithmetic conditions adapted to the corresponding regularity; see, for example, \cite{MR3936094,MR4273633,MR4257810,MR4477429}.

Classical KAM reducibility is usually organized by ordinary regularity. Since Fourier decay is measured by the Euclidean length $|n|$, ordinary smoothness appears to be the determining regularity governing whether the KAM iteration can be closed. From the viewpoint developed here, this is only part of the picture. What is essential is regularity measured in an appropriate Fourier algebra, together with control over the small divisors in that same algebra.

We describe such an algebra by an admissible Fourier length $\ell$. Membership in $\mathcal{A}_{\sigma}^{\ell}$ for some $\sigma>0$ means exponential Fourier decay with respect to $\ell$, with $\sigma$ measuring the adapted Fourier width. The usual analytic, Gevrey, and ultradifferentiable classes correspond to monotone choices $\ell(n)=\Lambda(|n|)$. The essential point is that $\ell$ need not be monotone. A function may therefore be very rough, or even nowhere differentiable, in the ordinary sense while having positive width in the adapted Fourier algebra. The classical Weierstrass function is a basic example: its principal Fourier coefficients occur at the modes $\pm b^{m}$ and decay exponentially in their adapted length, although the function itself can be nowhere differentiable.

The main conceptual point is therefore that the regularity relevant to the KAM iteration is positive adapted Fourier width rather than ordinary smoothness. The subadditivity of $\ell$ controls the modes created by multiplication and conjugation, its ball growth controls truncation and resonance counting, and the same length is used in the arithmetic condition for the small divisors. Thus the essential input is the matched pair of positive adapted Fourier width and subexponential arithmetic loss in the $\ell$-scale. 

This viewpoint allows KAM reducibility to persist far below the classical smoothness threshold. In particular, it applies to nowhere differentiable Weierstrass-type cocycles and to a class of continuous cocycles outside every positive H\"older class. For the associated quasiperiodic Schr\"odinger operators, we obtain purely absolutely continuous spectrum for every phase, $1/2$-H\"older continuity of the integrated density of states, purely absolutely continuous Cantor spectrum, and pure point spectrum for the Aubry dual with localization measured by the admissible Fourier length. This is in sharp contrast with the generic continuous case, where the zero Lyapunov set has zero Lebesgue measure and absolutely continuous spectrum is absent \cite{MR2181094}.

We now introduce the operators and cocycles under consideration. Let $\alpha\in\mathbb{R}\setminus\mathbb{Q}$, $x\in\mathbb{T}=\mathbb{R}/\mathbb{Z}$, and $V\in C^{0}(\mathbb{T},\mathbb{R})$. We consider the quasiperiodic Schr\"odinger operator
\begin{equation*}
	[H_{x,\alpha,V}u]_{n}=u_{n+1}+u_{n-1}+V(x+n\alpha)u_{n},\qquad n\in\mathbb{Z},
\end{equation*}
and its associated cocycle
\begin{equation*}
	S_{E}^{V}(x)=\begin{pmatrix}E-V(x)&-1\\1&0\end{pmatrix}\in\mathrm{SL}(2,\mathbb{R}).
\end{equation*}
A cocycle $(\alpha,A)$ is reducible if there exist $B:2\mathbb{T}\to\mathrm{SL}(2,\mathbb{R})$ and a constant $A_{*}\in\mathrm{SL}(2,\mathbb{R})$ such that
\begin{equation*}
	B(x+\alpha)^{-1}A(x)B(x)=A_{*}.
\end{equation*}
It is almost reducible in a Banach topology if its conjugacy class contains cocycles arbitrarily close to constants in that topology.

\subsection{Admissible Fourier lengths and Fourier structures}

To make the preceding viewpoint precise, an admissible Fourier length is a symmetric, positive, and subadditive function on the integer Fourier lattice,
\begin{equation*}
    \ell: \mathbb{Z}\to[0,\infty),\quad \ell(0)=0,\quad \ell(-n)=\ell(n),\quad \ell(n_{1}+n_{2})\leqslant\ell(n_{1})+\ell(n_{2}).
\end{equation*}
Its balls satisfy the power-subexponential growth estimate
\begin{equation}\label{eq:intro-ball-growth}
    Q_{\ell}(N)\coloneqq\#\{n\in\mathbb{Z}:\ell(n)\leqslant N\}\leqslant C_{\ell}\exp(c_{\ell}N^{\rho}),\qquad 0<\rho<1.
\end{equation}
The integer lattice is the natural domain because the perturbations, the small divisors, and the resonance labels are one-periodic. For conjugacies defined on $2\mathbb{T}$, we use the canonical extension $\bar{\ell}$ on $\frac{1}{2}\mathbb{Z}$ constructed in \hyperref[lem:half-lattice-extension]{Lemma~\ref*{lem:half-lattice-extension}} and suppress the bar in the notation. 

For $\sigma>0$, the associated one-periodic Fourier algebra is
\begin{equation}\label{eq:intro-adapted-algebra}
	\mathcal{A}_{\sigma}^{\ell}=\bigg\{f(x)=\sum_{n\in\mathbb{Z}}\widehat{f}(n)e^{2\pi i n x}:\|f\|_{\sigma,\ell}\coloneqq\sum_{n\in\mathbb{Z}}|\widehat{f}(n)|e^{\sigma\ell(n)}<\infty\bigg\}.
\end{equation}
Subadditivity makes $\mathcal{A}_{\sigma}^{\ell}$ a Banach algebra. The matched arithmetic condition is
\begin{equation}\label{eq:intro-adapted-diophantine}
	\|n\alpha\|_{\mathbb{T}}\geqslant\gamma\exp\big(-\tau\ell(n)^{\rho}\big),\qquad n\in\mathbb{Z}\setminus\{0\}.
\end{equation}
The ball growth and the small-divisor loss are therefore subexponential in the same adapted scale and can be absorbed by a reduction of adapted width.

This common scale is essential. A standard Diophantine condition gives
\begin{equation}\label{eq:intro-standard-diophantine}
\|n\alpha\|_{\mathbb{T}}\geqslant\gamma |n|^{-\tau},
\end{equation}
and controls the divisor in terms of $|n|$, while the Fourier coefficient is measured in $\ell(n)$. For a rough adapted length, $|n|^{\tau}$ need not be subexponential in $\ell(n)$ and cannot in general be absorbed in an infinite KAM iteration with positive limiting width.

The Euclidean choice $\ell(n)=|n|$ recovers the analytic Wiener algebra, and suitable monotone choices recover Gevrey classes. The principal non-monotone example is the classical Weierstrass function
\begin{equation*}
W_{a,b}(x)=\sum_{m\geqslant 0}a^{m}\cos(2\pi b^{m}x),\qquad b\in\mathbb{N},\qquad b\geqslant 2,\qquad b^{-1}<a<1.
\end{equation*}
It is H\"older continuous with exponent
\begin{equation*}
\beta=-\frac{\log a}{\log b}\in(0,1),
\end{equation*}
and is nowhere differentiable by Hardy's theorem \cite{MR1501044}.

For a fixed $b\geqslant 2$, define
\begin{equation*}
\ell_{b}(n)=\inf\bigg\{\sum_{m\geqslant 0}(m+1)|k_{m}|:n=\sum_{m\geqslant 0}k_{m}b^{m},\quad k_{m}\in\mathbb{Z}\bigg\}.
\end{equation*}
We write $\mathcal{A}_{\sigma}^{(b)}=\mathcal{A}_{\sigma}^{\ell_b}$ and whenever this notation is used on $2\mathbb{T}$, the canonical half-lattice extension is understood. Then $\ell_{b}(b^{m})\leqslant m+1$ and
\begin{equation*}
Q_{b}(N)\coloneqq\#\{n\in\mathbb{Z}:\ell_{b}(n)\leqslant N\}\leqslant e^{4\sqrt{N}}.
\end{equation*}
Moreover,
\begin{equation*}
\sum_{m\geqslant 0}a^{m}e^{\sigma\ell_{b}(b^{m})}\leqslant e^{\sigma}\sum_{m\geqslant 0}(ae^{\sigma})^{m}<\infty
\end{equation*}
whenever $ae^{\sigma}<1$, so $W_{a,b}\in\mathcal{A}_{\sigma}^{\ell_{b}}$. On the indices $n=\pm b^{m}$, the standard condition permits the inverse loss
\begin{equation*}
|b^{m}|^{\tau}=\exp(\tau m\log b),
\end{equation*}
which cannot be absorbed by an arbitrarily small loss of adapted width. By contrast, the adapted condition
\begin{equation*}
\|n\alpha\|_{\mathbb{T}}\geqslant\gamma e^{-\tau\sqrt{\ell_{b}(n)}},\qquad \tau>4,
\end{equation*}
has only subexponential loss in $\ell_{b}(n)$.

More lacunary choices yield continuous sampling functions outside every positive H\"older class, as shown in \hyperref[cor:continuous-nonholder]{Corollary~\ref*{cor:continuous-nonholder}}. Strict sparsity is not required. For any admissible length $\ell$, all Fourier modes may occur as long as
\begin{equation*}
\sum_{n\in\mathbb{Z}}|\widehat{V}(n)|e^{\sigma\ell(n)}<\infty
\end{equation*}
for some $\sigma>0$. Thus one may add Fourier coefficients at modes of large adapted length, provided that they are sufficiently small. The sparse series in \hyperref[prop:sparse-class]{Proposition~\ref*{prop:sparse-class}} gives a useful class of examples.

\subsection{Main results}

At the core of the paper is a quantitative KAM reducibility theorem for general $\mathrm{SL}(2,\mathbb{R})$ cocycles in adapted Fourier algebras. Its specialization to Schr\"odinger cocycles yields the spectral results stated below. The precise hypotheses are given in \hyperref[def:admissible]{Definition~\ref*{def:admissible}}.

\begin{theorem}[KAM almost reducibility in an adapted Fourier algebra]
\label{thm:intro-kam}
    Let $\ell$ be an admissible Fourier length of exponent $0<\rho<1$ and ball-growth constant $c_{\ell}$. Let $\tau>c_{\ell}$, $\gamma>0$, and $\alpha\in\mathrm{DC}_{\ell}(\gamma,\tau)$. Fix a compact subset $\mathcal{K}\subseteq\mathrm{SL}(2,\mathbb{R})$ and widths $0<\sigma_{*}<\sigma_{0}$. There exists
    \begin{equation*}
        \varepsilon_{0}=\varepsilon_{0}(\ell,\rho,\tau,\gamma,\sigma_{0},\sigma_{*},\mathcal{K})>0
    \end{equation*}
    such that, whenever $A\in\mathcal{K}$ and $F\in\mathcal{A}_{\sigma_{0}}^{\ell}(\mathbb{T},\mathrm{sl}(2,\mathbb{R}))$ satisfies $\|F\|_{\sigma_{0}}<\varepsilon_{0}$, the cocycle $(\alpha,Ae^{F})$ is almost reducible. More precisely, there are $B_{j}\in \mathcal{A}_{\sigma_{j}}^{\ell}(2\mathbb{T},\mathrm{SL}(2,\mathbb{R}))$, constants $A_{j}\in\mathrm{SL}(2,\mathbb{R})$, widths $\sigma_{j}\downarrow\sigma_{*}$, and $F_{j}\in\mathcal{A}_{\sigma_{j}}^{\ell}(\mathbb{T},\mathrm{sl}(2,\mathbb{R}))$ such that
    \begin{equation*}
        B_{j}(x+\alpha)^{-1}Ae^{F(x)}B_{j}(x)=A_{j}e^{F_{j}(x)}, \qquad \|F_{j}\|_{\sigma_{j}}\longrightarrow 0.
    \end{equation*}
    If only finitely many KAM steps are resonant, then the cocycle is reducible.

    If $\mathcal{A}=Ae^{F}$ is not uniformly hyperbolic, then the Lyapunov exponent $L(\alpha,\mathcal{A})$ vanishes. Moreover, the map $A\mapsto L(\alpha,A)$ is locally $1/2$-H\"older continuous at $\mathcal{A}$ in the $C^{0}$ topology.
\end{theorem}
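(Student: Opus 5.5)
The proof follows the classical Eliasson/Hou--You scheme for perturbative almost reducibility, carried out in the Banach algebras $\mathcal{A}_{\sigma}^{\ell}$. Subadditivity of $\ell$ replaces the triangle inequality for $|n|$, and the matched condition \eqref{eq:intro-adapted-diophantine} replaces the usual Diophantine condition.

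\textbf{Scales.} I fix widths $\sigma_{j}=\sigma_{*}+(\sigma_{0}-\sigma_{*})2^{-j}$, so $\sigma_{j}-\sigma_{j+1}\sim 2^{-j}$. I choose errors $\varepsilon_{j+1}=\varepsilon_{j}^{1+\kappa}$, or super-exponentially small in $j$, together with truncation lengths $N_{j}$. The lengths are tied by $\varepsilon_{j}\approx e^{-(\sigma_{j}-\sigma_{j+1})N_{j}/2}$, so that the tail $\sum_{\ell(n)>N_{j}}$ of a function in $\mathcal{A}_{\sigma_{j}}^{\ell}$ is $O(\varepsilon_{j}^{2})$ in $\mathcal{A}_{\sigma_{j+1}}^{\ell}$.

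\textbf{One KAM step.} At step $j$ the cocycle is $A_{j}e^{F_{j}}$, and I diagonalize $A_{j}$ with eigenvalues $e^{\pm 2\pi i\rho_{j}}$. The homological equation $Y(x+\alpha)A_{j}-A_{j}Y(x)=-A_{j}T_{N_{j}}F_{j}$ is solved mode by mode. The off-diagonal divisors are $\|2\rho_{j}-n\alpha\|$ and the diagonal ones are $\|n\alpha\|$, for $\ell(n)\le N_{j}$.

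\emph{Nonresonant case.} Suppose $\|2\rho_{j}-n\alpha\|\ge\varepsilon_{j}^{1/15}$ for all $0<\ell(n)\le N_{j}$. Using \eqref{eq:intro-adapted-diophantine}, each divisor is at least $\min(\varepsilon_{j}^{1/15},\gamma e^{-\tau N_{j}^{\rho}})$. Since $\rho<1$, the loss $e^{\tau N_{j}^{\rho}}$ is subexponential and is absorbed into $e^{(\sigma_{j}-\sigma_{j+1})N_{j}/10}$ for $j$ large, that is, for $\varepsilon_{0}$ small. So $\|Y\|_{\sigma_{j+1}}\lesssim\varepsilon_{j}^{1-1/15}$. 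Conjugating by $e^{Y}$ leaves a new error of size $\varepsilon_{j}^{2-2/15}$ plus the truncation tail, hence $\le\varepsilon_{j+1}$. The Banach algebra property handles all products and exponentials.

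\emph{Resonant case.} Suppose there is $n_{*}$ with $\ell(n_{*})\le N_{j}$ and $\|2\rho_{j}-n_{*}\alpha\|<\varepsilon_{j}^{1/15}$. A gap argument using \eqref{eq:intro-adapted-diophantine} and the triangle inequality for $\ell$ shows that any second resonance $m$ satisfies $\ell(m-n_{*})\gtrsim$ large, so $n_{*}$ is essentially unique. I then remove it with the rotation $R_{n_{*}x/2}$, defined on $2\mathbb{T}$; this is why the half-lattice extension of Lemma~\ref{lem:half-lattice-extension} is needed. By subadditivity its norm in $\mathcal{A}^{\ell}_{\sigma}$ is at most $e^{C\sigma\ell(n_{*})}\le e^{C\sigma N_{j}}$. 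After it, the new constant has rotation number close to $0$ and one again has a nonresonant step. The growth $e^{CN_{j}}$ of the conjugacy is harmless against $\varepsilon_{j+1}$ because $N_{j}\sim|\log\varepsilon_{j}|$ with a smaller constant. The count $Q_{\ell}(N)\le C_{\ell}e^{c_{\ell}N^{\rho}}$ and the hypothesis $\tau>c_{\ell}$ enter here and in the sum over modes, where the factor $Q_{\ell}(N_{j})$ is again subexponential. Composing all steps gives $B_{j}$, $A_{j}$, $F_{j}$ with $\|F_{j}\|_{\sigma_{j}}\to0$. If only finitely many steps are resonant, then the $B_{j}$ converge in $\mathcal{A}_{\sigma_{*}}^{\ell}$, which gives reducibility.

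\textbf{Lyapunov exponent and H\"older continuity.} If $\mathcal{A}$ is not uniformly hyperbolic, the constants $A_{j}$ must be elliptic, parabolic, or only very weakly hyperbolic with $|\mathrm{tr}A_{j}|-2\lesssim\varepsilon_{j}^{c}$. The reason is that a definitely hyperbolic $A_{j}$ with a small perturbation is uniformly hyperbolic, a property invariant under conjugation. Hence $L(\alpha,\mathcal{A})=L(\alpha,A_{j}e^{F_{j}})\le\log\|A_{j}\|+C\|F_{j}\|\to0$; one can also use the bound on $\|B_{j}\|$ against the decay of $\|F_{j}\|$ together with the tempered growth. For $1/2$-H\"older continuity I follow Avila--Jitomirskaya or Amor. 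Given a $C^{0}$ perturbation $\mathcal{A}'$ with $\|\mathcal{A}'-\mathcal{A}\|_{C^{0}}=\delta$, I choose $j$ with $\varepsilon_{j}\sim\delta$. Then $(\alpha,\mathcal{A}')$ is conjugate by $B_{j}$ to $A_{j}+O(\varepsilon_{j}+\|B_{j}\|_{C^{0}}^{2}\delta)$. The key estimate is $\|B_{j}\|_{C^{0}}^{2}\lesssim\varepsilon_{j}^{-\text{small}}$. For a near-parabolic constant matrix, $L$ of a $\eta$-perturbation is $O(\eta^{1/2})$, which yields $|L(\alpha,\mathcal{A}')|\lesssim\delta^{1/2}$.

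\textbf{Main obstacle.} The main obstacle is the resonant step. One has to control $\|B_{j}\|$ through the resonance rotation, using only subadditivity rather than monotonicity of $\ell$, and choose $N_{j}$ so that the conjugacy growth, the loss $Q_{\ell}(N_{j})$, and the loss $\gamma^{-1}e^{\tau N_{j}^{\rho}}$ are all beaten by $\varepsilon_{j}$. This bookkeeping also supplies the bound $\|B_{j}\|^{2}\lesssim\varepsilon_{j}^{-o(1)}$ that the $1/2$-H\"older estimate needs.
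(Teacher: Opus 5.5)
The resonant step, as you set it up, does not close. You assert that the factor $e^{\sigma\ell(n_*)}\le e^{\sigma N_j}$ is harmless because ``$N_j\sim|\log\varepsilon_j|$ with a smaller constant''. This is the factor by which conjugation with $R_{n_*x/2}$ can inflate $\mathcal{A}^{\ell}_{\sigma}$-norms.

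Your own coupling $\varepsilon_j\approx e^{-(\sigma_j-\sigma_{j+1})N_j/2}$ gives $N_j\approx 2|\log\varepsilon_j|/(\sigma_j-\sigma_{j+1})$. So $e^{\sigma_{j+1}\ell(n_*)}$ can be as large as $\varepsilon_j^{-2\sigma_{j+1}/(\sigma_j-\sigma_{j+1})}$, an unbounded negative power of $\varepsilon_j$. The rotation shifts off-diagonal modes by $\mp n_*$, so whatever survives your truncated first-order step is multiplied by this factor. That includes the quadratic error $O(\varepsilon_j^{2-2/15})$, present also at low modes (e.g.\ a constant off-diagonal term lands on mode $-n_*$), and the tail. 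The new perturbation is then not small.

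You prove uniqueness of $n_*$, but its real role is to extend the elimination range. By Lemma~\ref{uniquen}, $n_*$ is the only resonance in the much larger ball $\ell(n)\le\widetilde N=\big(|\log\varepsilon|/(20\tau)\big)^{1/\rho}-N\gg N$. This is a second use of $\rho<1$. The nonlinear elimination of Lemma~\ref{elimination} then removes \emph{exactly}, not to first order, every mode with $\ell(n)\le\widetilde N$ except the constant diagonal term and the resonant monomial at $n_*$. After rotation that monomial becomes a constant absorbed into $A_+$. The only remainder comes from $\mathcal{R}_{\widetilde N}\widetilde F^{\mathrm{re}}$, which after rotation is at most $2\varepsilon^{9/10}e^{\sigma_+N-(\sigma-\sigma_+)\widetilde N}<\varepsilon^{100}$. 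This is what dictates $\varepsilon<c\exp(-C(\sigma-\sigma_+)^{-2\rho/(1-\rho)})$. Likewise, the subexponential bound on $B_j$ is a $C^0$ bound from the normalizing matrix, $\|P\|\lesssim\|2\varrho\|_{\mathbb{T}}^{-1/2}\lesssim e^{\tau N^{\rho}/2}$, where the rotation costs nothing. Your $\mathcal{A}_\sigma$ bound $e^{C\sigma N_j}$ would not give $\varepsilon_j^{-o(1)}$.

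Your H\"older argument also loses. With $\|B_j\|_{C^0}^2\le\exp(C|\log\delta|^{(1+\rho)/2})$ it gives $|L(\alpha,\mathcal{A}')|\lesssim(\|B_j\|_{C^0}^2\delta)^{1/2}=\delta^{1/2-o(1)}$, not $\delta^{1/2}$. The missing input is $\|B_j\|_{C^0}^2|\zeta_j|\le 8\|A\|$ for the nilpotent entry of the Schur form $U_j^{-1}A_jU_j$. It holds because after a resonant step $A_{j+1}$ is $10\varepsilon_j^{1/10}$-close to $\pm\mathrm{I}$, so the tiny $\zeta$ compensates the growth of $\|B_{j+1}\|_{C^0}$. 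Combine it with the rescaling $\mathrm{diag}(d_j,d_j^{-1})$, $d_j=\|B_j\|_{C^0}\delta^{1/4}$, for $j$ with $C_0\varepsilon_j^{1/4}<\delta<\exp(-4C_\rho|\log\varepsilon_{j-1}|^{(1+\rho)/2})$. Then the conjugate of $\mathcal{A}$ is within $C_A\delta^{1/2}$ of a constant unitary matrix, and the conjugate of $\mathcal{A}'-\mathcal{A}$ has norm at most $4\delta^{1/2}$. This gives $L(\alpha,\mathcal{A}')\le\log(1+C_A\delta^{1/2})$ with no loss.

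The same device, or the linear growth $\|\mathcal{A}_s\|_{C^0}\le C(s+1)$ it yields, is needed for $L=0$. Your bound $\log\|A_j\|+C\|F_j\|$ does not tend to $0$, since $\|A_j\|$ need not approach $1$.

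Finally, consider halving widths $\sigma_j-\sigma_{j+1}\sim2^{-j}$ with convergence no better than quadratic. The absorption $\tau N_j^{\rho}\ll(\sigma_j-\sigma_{j+1})N_j$ then fails for large $j$ when $\rho>1/2$, and shrinking $\varepsilon_0$ does not help because the ratio grows geometrically in $j$. The paper instead uses $\sigma_j-\sigma_{j+1}\sim j^{-2}$ with $\varepsilon_{j+1}=4\varepsilon_j^3$.
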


The quantitative form in \hyperref[KAM]{Theorem~\ref*{KAM}} provides suitable control of the conjugacies at every scale. Together with the eigenvalue estimates, this yields the vanishing and $1/2$-H\"older continuity of the Lyapunov exponent and provides the input for the spectral statements.

\begin{theorem}[Schr\"odinger reducibility and spectral consequences]
\label{thm:intro-schrodinger}
    Assume the arithmetic hypotheses of \hyperref[thm:intro-kam]{Theorem~\ref*{thm:intro-kam}}. For every $\sigma_{0}>0$ there is $\varepsilon_{*}>0$ such that the following holds. If $V\in\mathcal{A}_{\sigma_{0}}^{\ell}(\mathbb{T},\mathbb{R})$ and $\|V\|_{\sigma_{0}}<\varepsilon_{*}$, then:
    \begin{enumerate}
        \item \label{item:intro-SOAR}For every $E\in\Sigma_{\alpha,V}$, the cocycle $(\alpha,S_{E}^{V})$ is almost reducible;

        \item \label{item:SOAC}$H_{x,\alpha,V}$ has purely absolutely continuous spectrum for every $x\in\mathbb{T}$;

        \item \label{item:SOEL}For every $x\in\mathbb{T}$ and for the universal spectral measure $\mu_{x}=\mu_{\delta_{0}}+\mu_{\delta_{1}}$, $(\alpha,S_{E}^{V})$ is reducible to an elliptic constant for $\mu_{x}$-almost every $E$;

        \item \label{item:SOIDS} There exists $C>0$ such that, for all $E_{1},E_{2}\in\mathbb{R}$, the integrated density of states satisfies
        \begin{equation*}
            |N_{\alpha,V}(E_{2})-N_{\alpha,V}(E_{1})|\leqslant C|E_{2}-E_{1}|^{\frac{1}{2}}.
        \end{equation*}
    \end{enumerate}
\end{theorem}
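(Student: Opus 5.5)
We derive Theorem~\ref{thm:intro-schrodinger} from Theorem~\ref{thm:intro-kam}, following the route of Eliasson and Avila for the analytic case. The Fourier length enters only through the quantitative KAM estimates.

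\textbf{Reduction to the KAM theorem.} Write $S_{E}^{V}=S_{E}^{0}e^{F_{E}}$ with
\begin{equation*}
F_{E}=\log\big((S_{E}^{0})^{-1}S_{E}^{V}\big),\qquad (S_{E}^{0})^{-1}S_{E}^{V}=\begin{pmatrix}1&0\\ V&1\end{pmatrix}.
\end{equation*}
This matrix is unipotent, so its logarithm is explicit: $F_{E}=\begin{pmatrix}0&0\\ V&0\end{pmatrix}$, and $\|F_{E}\|_{\sigma_{0}}=\|V\|_{\sigma_{0}}$. The spectrum satisfies $\Sigma_{\alpha,V}\subseteq[-2-\|V\|_{C^0},2+\|V\|_{C^0}]\subseteq[-3,3]$. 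We take $\mathcal{K}=\{S_{E}^{0}:|E|\leqslant 3\}$, which is compact. Applying Theorem~\ref{thm:intro-kam} with, say, $\sigma_{*}=\sigma_{0}/2$ gives $\varepsilon_{*}=\varepsilon_{0}$, uniformly in $E$. This proves \eqref{item:intro-SOAR}. For $E\in\Sigma_{\alpha,V}$ the cocycle is not uniformly hyperbolic, so the same theorem gives zero Lyapunov exponent on the spectrum.

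\textbf{Quantitative resonance bookkeeping and the IDS.} From the quantitative Theorem~\ref{KAM} we extract, for each $E$, the sequence of resonant steps $n_{j}$. These are resonance labels with $\ell(n_{j})$ growing at a prescribed rate. They come with the bounds $\|B_{j}\|_{C^0}\lesssim e^{C\sigma\ell(n_{j})}$ and a rotation number satisfying $2\rho(E)\approx\langle n_{j}\alpha\rangle$ modulo $\mathbb{Z}$, up to errors controlled by $\varepsilon_{j}$. Since $N_{\alpha,V}=1-2\rho$, Hölder continuity of the IDS is a statement about $\rho$.

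For $E_{1}<E_{2}$ we choose the step $j$ at which $|E_{2}-E_{1}|$ is comparable to $\varepsilon_{j}$. We then compare the conjugated cocycles, which are $A_{j}(E)e^{F_{j}}$ with $A_{j}$ depending smoothly on $E$. For a nearly constant $\mathrm{SL}(2,\mathbb{R})$ matrix near a parabolic one, the rotation number of $A_{j}(E)$ moves at most like $|E_{2}-E_{1}|^{1/2}$. This is the square-root behaviour at a band edge, computed from the trace. The conjugation $B_{j}$ changes $\rho$ only by the fixed integer shift $\langle n_{j}\alpha\rangle/2$, which is the same for $E_{1}$ and $E_{2}$ when they lie in the same resonance regime.

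The main obstacle is the case where $E_{1}$ and $E_{2}$ fall into different resonance regimes. There we plan to use the $1/2$-Hölder continuity of $L$ from Theorem~\ref{thm:intro-kam}, transported via the Thouless formula and the harmonic conjugate, to get $1/2$-Hölder continuity of $N$ directly. Failing that, we would use the standard interpolation in which the $C^0$ bound on $B_{j}$ enters only through $e^{C\sigma\ell(n_{j})}$. By the choice of scales, $\varepsilon_{j}$ is much smaller than $e^{-C'\sigma\ell(n_{j})}$, so this factor is absorbed into the square root. This is \eqref{item:SOIDS}.

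\textbf{Absolutely continuous spectrum and elliptic reducibility.} We use Avila's criterion: the spectral measure is absolutely continuous on the set of energies where the transfer matrices stay bounded along a subsequence, together with a bound on the measure of the exceptional set. We split $\Sigma_{\alpha,V}$ into three parts:
\begin{itemize}
\item $\mathcal{R}$, the energies where the cocycle is reducible to an elliptic constant, where $\sup_{n}\|A_{n}\|<\infty$;
\item the energies reducible to a parabolic constant or with $2\rho\in\alpha\mathbb{Z}+\mathbb{Z}$, a countable set;
\item the energies with infinitely many resonances.
\end{itemize}
For the last set we cover it at step $j$ by intervals around energies with $2\rho=\langle n\alpha\rangle$. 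Each interval has length about $\varepsilon_{j}^{1/2}$ by the Hölder estimate. On it the transfer matrices grow at most like $\|B_{j}\|^{2}$ up to time $\varepsilon_{j}^{-1}$, which feeds into the Avila--Jitomirskaya/Last-type estimate $\mu(J)\lesssim |J|\sup_{k\leqslant |J|^{-1}}\|A_{k}\|^{2}$. Summing over the at most $Q_{\ell}(\ell(n_{j}))\leqslant C_{\ell}e^{c_{\ell}\ell(n_{j})^{\rho}}$ resonant labels, and using $\tau>c_{\ell}$ and $\rho<1$, the resulting series is summable. So $\mu_{x}$ gives zero mass to this set, and it has zero singular part. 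Point masses on the countable set are excluded by the absence of $\ell^{2}$ solutions at parabolic reducible energies, since there the solutions are bounded or linearly growing. Hence $\mu_{x}$ is absolutely continuous and supported on $\mathcal{R}$, which gives \eqref{item:SOAC} and \eqref{item:SOEL}. The key point is that the resonance-counting loss is subexponential in the same $\ell$-scale as the decay of $\varepsilon_{j}$, so it is absorbed.
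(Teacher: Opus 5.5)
You have a genuine gap in the absolutely continuous part, at the step where you cover the infinitely resonant energies. You claim that the energies resonant at step $j$ with a given label lie in an interval of length about $\varepsilon_{j}^{1/2}$ ``by the H\"older estimate''. That estimate points the wrong way. A $\frac12$-H\"older \emph{upper} bound on $N_{\alpha,V}$ (or on $\varrho$) says that short energy intervals have short $N$-images. It gives no control on how long the energy preimage of a short window $\{E:\|2\varrho(E)-\tilde n\alpha\|_{\mathbb{T}}\leqslant 2\varepsilon_{j}^{1/10}\}$ can be, since $N$ may be very flat there.

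What is missing is a \emph{lower} bound on the growth of $N$ along the spectrum. This is Lemma~\ref{idslower}: $N(E+\epsilon)-N(E-\epsilon)\geqslant c\,\epsilon^{3/2}|\log\epsilon|^{-1}$ for $E\in\Sigma_{\alpha,V}$. It follows from the Thouless formula, $L\equiv 0$ on $\Sigma_{\alpha,V}$, and the upper H\"older bound.

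This lower bound also fixes the scales. The energy intervals must be long enough that their $N$-images exceed the label windows. The paper takes half-length $\varepsilon_{j}^{1/20}$, so each image has length at least $8\varepsilon_{j}^{1/10}$. Each window then meets $O(1)$ intervals, giving at most $12\,Q_{\ell}(2N_{j})$ intervals in total (Lemma~\ref{coverK}). With your half-length $\varepsilon_{j}^{1/2}$, the lower bound only limits the number of intervals per label to roughly $\varepsilon_{j}^{-13/20}$, and summability is lost. At half-length $\varepsilon_{j}^{1/20}$, Lemma~\ref{Avilamu} needs transfer-matrix control only up to time $C\varepsilon_{j}^{-1/20}$. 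That control follows from $\|A_{j+1}-\chi_{j+1}\mathrm{I}\|\leqslant 10\varepsilon_{j}^{1/10}$ at a resonant step (Lemma~\ref{rsgrowth}).

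There are also two smaller bookkeeping issues. First, the labels to count are $\tilde n_{j}=n_{j}+\deg B_{j}$, with $\ell(\tilde n_{j})\leqslant 2N_{j}$ by the degree estimate of Theorem~\ref{KAM}, not $n_{j}$ alone. Second, summability comes from $\rho<1$, via $c_{\ell}(2N_{j})^{\rho}\ll|\log\varepsilon_{j}|$, not from $\tau>c_{\ell}$.

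For (4), your fallback is the paper's route, and it is simpler without a harmonic conjugate. Apply Theorem~\ref{LEholder} to the $\mathrm{SL}(2,\mathbb{C})$ perturbation $S^{V}_{E+i\eta}$ of $S^{V}_{E}$, which has size $\eta$. Then use $L(E+i\eta)-L(E)=\frac12\int\log\big(1+\eta^{2}(E-E')^{-2}\big)\,\mathrm{d}N_{\alpha,V}(E')\geqslant\frac{\log 2}{2}\big(N_{\alpha,V}(E+\eta)-N_{\alpha,V}(E-\eta)\big)$. Your primary plan for (4) is not supported as written. The conjugacies are built energy by energy, with resonance choices that jump in $E$, so $A_{j}(E)$ has no smooth $E$-dependence to exploit.

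Your reduction in (1) and your decomposition of $\Sigma_{\alpha,V}$ with Borel--Cantelli agree with the paper.
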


\begin{corollary}[Pure point spectrum of the Aubry dual operator]
\label{cor:intro-dual}
    Under the assumptions and the smallness condition of \hyperref[thm:intro-schrodinger]{Theorem~\ref*{thm:intro-schrodinger}}, define
\begin{equation*}
    [\widehat{H}_{\alpha,\theta,V}u]_{n} = \sum_{k\in\mathbb{Z}}\widehat{V}(k)u_{n-k} + 2\cos 2\pi(\theta+n\alpha)u_{n}.
\end{equation*}
Then, for Lebesgue almost every dual phase $\theta\in\mathbb{T}$, the operator $\widehat{H}_{\alpha,\theta,V}$ has pure point spectrum. More precisely, it admits an orthonormal eigenbasis $\{u^{(j)}\}_{j\geqslant 1}$ such that, for every $0<\sigma<\frac{\sigma_{0}}{2}$,
\begin{equation*}
    \sum_{n\in\mathbb{Z}}|u_{n}^{(j)}|e^{\sigma\ell(n)}<\infty.
\end{equation*}
Equivalently, the eigenfunctions are exponentially localized with respect to the adapted lattice metric $d_{\ell}(m,n)=\ell(m-n)$. Moreover, if $V\notin C^{r}(\mathbb{T},\mathbb{R})$ for some $0<r<1$, then for every $s>r$ and every $j\geqslant 1$,
\begin{equation*}
    \sum_{n\in\mathbb{Z}}(1+|n|)^{s}|u_{n}^{(j)}|=\infty.
\end{equation*}
In particular, if $V$ belongs to no positive H\"older class, then none of these eigenfunctions satisfies a positive polynomially weighted $\ell^{1}$ estimate.
\end{corollary}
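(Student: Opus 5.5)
The plan follows the standard Aubry-duality route from reducibility to localization (Puig's argument, in the form of Avila–Jitomirskaya and Avila–You–Zhou), run in the adapted algebra $\mathcal{A}_{\sigma}^{\ell}$ instead of the analytic class.

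\emph{Step 1: Bloch waves from reducibility.} By \hyperref[thm:intro-schrodinger]{Theorem~\ref*{thm:intro-schrodinger}}\eqref{item:SOEL}, for $\mu_x$-almost every $E\in\Sigma_{\alpha,V}$ there is $B\in\mathcal{A}^{\ell}_{\sigma'}(2\mathbb{T},\mathrm{SL}(2,\mathbb{R}))$, for every $\sigma'<\sigma_0$ after the width losses, with
\[
B(x+\alpha)^{-1}S_E^V(x)B(x)=R_{\phi},
\]
a rotation. Diagonalizing $R_\phi$ over $\mathbb{C}$ gives a nonzero $\psi\in\mathcal{A}^{\ell}_{\sigma'}(2\mathbb{T},\mathbb{C})$ with
\[
\psi(x+\alpha)+\psi(x-\alpha)+V(x)\psi(x)=E\psi(x)
\]
up to the phase factor $e^{2\pi i\phi}$. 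When $2\phi\in\alpha\mathbb{Z}+\mathbb{Z}$, the half-lattice extension $\bar\ell$ of Lemma~\ref*{lem:half-lattice-extension} lets one shift $\psi$ by $e^{\pi i k x}$ to a one-periodic function without leaving the algebra. Set $\theta=\phi$, modulo the usual normalization of the rotation number. The Fourier coefficients $u_n=\widehat{\psi}(n)$ then satisfy $\widehat{H}_{\alpha,\theta,V}u=Eu$, and
\[
\sum_{n\in\mathbb{Z}}|u_n|e^{\sigma\ell(n)}<\infty .
\]
The factor $\sigma_0/2$ arises from the half-lattice extension (the bound $\bar\ell(n/2)$ versus $\ell(n)$) together with the KAM loss $\sigma_*$ taken close to $\sigma_0$. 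I will verify this bookkeeping, but I expect Lemma~\ref*{lem:half-lattice-extension} to supply it.

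\emph{Step 2: completeness for a.e.\ $\theta$.} This is the main obstacle. One needs, for Lebesgue-a.e.\ $\theta$, that these eigenvectors span $\ell^2$. I would follow the Avila–Jitomirskaya / Gordon–Jitomirskaya–Last–Simon argument.
\begin{itemize}
\item Purely a.c.\ spectrum (Theorem~\ref*{thm:intro-schrodinger}\eqref{item:SOAC}) together with the reducibility in \eqref{item:SOEL} gives that the map $E\mapsto\theta=\rho(E)$, the rotation number, pushes the density-of-states measure $dN$ to Lebesgue measure on $\mathbb{T}$.
\item Aubry duality identifies the integrated density of states of the dual family with $N_{\alpha,V}$.
\item Hence for a.e.\ $\theta$ the dual operator has eigenvalues $E_{\theta}^{(k)}$ produced by the phases $\theta+k\alpha$. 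Their eigenvectors are the shifts $u_{n-k}$. The remaining issue is whether they exhaust the spectral measure of $\widehat{H}_{\alpha,\theta,V}$.
\item Completeness follows from the identity $\int_{\mathbb{T}}\sum_k|\langle\delta_0,u^{(k)}\rangle|^2\,d\theta=1$, using normalization $\|u\|_{\ell^2}=1$ and the change of variables $dN\leftrightarrow d\theta$. The averaged spectral measure of $\delta_0$ equals $dN$, the spectral measure of $\delta_0$ is bounded by $1$, and the integrated point part already has mass $1$, so the point part is everything for a.e.\ $\theta$. Shift covariance extends this from $\delta_0$ to every $\delta_n$.
\end{itemize}
The delicate points are measurability of $E\mapsto\psi_E$ and the use of the $1/2$-Hölder bound on the IDS to discard the countable set of resonant rotation numbers. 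Both should follow from the quantitative KAM of Theorem~\ref*{KAM}.

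\emph{Step 3: non-polynomial decay.} Suppose $\sum_n(1+|n|)^s|u_n|<\infty$ for some $s>r$. Then $\psi\in C^{s}$ whenever $s<1$, and $\psi\in C^1$ when $s\ge1$, in which case we may lower $s$ to some $s'\in(r,1)$. Where $\psi\neq0$, solve for $V$:
\[
V=E-\frac{\psi(\cdot+\alpha)+\psi(\cdot-\alpha)}{\psi}.
\]
This makes $V$ locally $C^{s'}$ on the set $\{\psi\ne0\}$, but $\psi$ may vanish. To avoid dividing by $\psi$, I will use instead the matrix $B$, which has nonvanishing determinant. Its columns are the real and imaginary parts of $(\psi(x),\psi(x-\alpha))$ up to phase. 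So $B$ inherits $C^{s'}$ regularity, and then
\[
S_E^V(x)=B(x+\alpha)R_{\phi}B(x)^{-1}
\]
is $C^{s'}$. Hence $V\in C^{s'}\subset C^{r}$, a contradiction. The final statement is the special case in which this holds for every $r$.
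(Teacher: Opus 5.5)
Your argument is correct in outline, and Steps~1--2 agree with the paper. The one difference there is that the paper does not re-derive completeness. It checks that $\mathrm{d}N_{\alpha,V}$-almost every $E$ admits a degree-zero reduction $C_E=B_ER_{-m_Ex/2}$, with $m_E=\deg B_E$, to $R_{\varrho(E)}$, and then quotes \cite[Theorem 3.1]{MR3512893}, which packages the averaging argument you sketch.

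The genuine difference is how you obtain non-degeneracy in Step~3.
\begin{itemize}
\item The paper takes an arbitrary eigenfunction at $\theta\in\Theta_0$ (so $2\theta\notin\alpha\mathbb{Z}+\mathbb{Z}$) and builds $U$ from $\psi$ and $\bar\psi$. It excludes $\det U\equiv0$ because degeneracy would produce a nonzero continuous $c$ with $c(x+\alpha)=e^{4\pi i\theta}c(x)$. Proposition~\ref{prop:bloch-regularity} then gives $V\in C^r$.
\item You use instead that each basis vector is, up to shift, conjugation and normalization, the Fourier sequence of $(C_EM^{-1})_{11}$, where $C_E=(c_{ij})$ is real and unimodular. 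Then $c_{11}=\mathrm{Im}((1+i)\psi)$ and $c_{12}=-\mathrm{Re}((1+i)\psi)$. The second row follows from $(c_{21},c_{22})(x+\alpha)=(c_{11},c_{12})(x)R_{-\varrho(E)}$, and $\det C_E=1$ comes for free.
\end{itemize}
In fact $C_EM^{-1}=(\Psi,\,i\bar\Psi)$, where $\Psi$ is the paper's $U_+$ at phase $\varrho(E)$, so the paper's $\det U$ is a nonzero constant for these vectors anyway. Your route is shorter and needs no exclusion of $\theta$ at this step. The paper's route covers every eigenfunction at every $\theta\in\Theta_0$, regardless of how the basis was produced. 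Since the adapted-decay claim already requires the basis to consist of these Bloch waves, your version suffices for the statement.

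Three bookkeeping points in Steps~1--2 need fixing.
\begin{itemize}
\item The width $\sigma_0/2$ does not come from $\bar\ell$. Indeed $\bar\ell=\ell$ on $\mathbb{Z}$, and multiplying by $e^{-\pi i m x}$ costs only the factor $e^{\sigma\bar\ell(m/2)}$ by subadditivity, with no loss of width. The $\sigma_0/2$ comes from running the KAM scheme to $\sigma_*=\sigma_0/2$ in the proof of Theorem~\ref{SOKAM}. At the fixed threshold $\varepsilon_*$ you do not get $B\in\mathcal{A}^{\ell}_{\sigma'}$ for all $\sigma'<\sigma_0$.
\item The half-integer shift is dictated by $\deg B$, not by whether $2\phi\in\alpha\mathbb{Z}+\mathbb{Z}$. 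It moves the phase from $\phi=\varrho(E)-m\alpha/2$ to exactly $\varrho(E)$, which is what your change of variables in Step~2 requires.
\item That change of variables comes from $N_{\alpha,V}=1-2\varrho$ and the continuity of $N_{\alpha,V}$, with conjugate Bloch waves covering $-\varrho(E)$. It does not come from absolute continuity. Reducibility supplies only that the good energies have full $\mathrm{d}N_{\alpha,V}$-measure, via $\mathrm{d}N_{\alpha,V}=\frac12\int_{\mathbb{T}}\mathrm{d}\mu_{x,\alpha,V}\,\mathrm{d}x$. No H\"older bound is needed to discard countable sets.
\end{itemize}
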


\begin{corollary}[Classical Weierstrass potentials]
\label{cor:intro-weierstrass}
    Let $b\geqslant 2$, $b^{-1}<a<1$, and $\tau>4$. For every $\alpha\in\mathrm{DC}_{b}(\tau)$ there exists $\lambda_{0}=\lambda_{0}(a,b,\alpha,\tau)>0$ such that the conclusions of \hyperref[thm:intro-schrodinger]{Theorem~\ref*{thm:intro-schrodinger}} and \hyperref[cor:intro-dual]{Corollary~\ref*{cor:intro-dual}} hold for $V=\lambda W_{a,b}$ whenever $|\lambda|<\lambda_{0}$. If $\lambda\neq0$ and
    \begin{equation*}
        \beta=-\frac{\log a}{\log b},
    \end{equation*}
    then, for Lebesgue almost every dual phase $\theta\in\mathbb{T}$, every eigenfunction in the basis from \hyperref[cor:intro-dual]{Corollary~\ref*{cor:intro-dual}} satisfies
    \begin{equation*}
        \sum_{n\in\mathbb{Z}}(1+|n|)^{s}|u_{n}|=\infty
    \end{equation*}
    for every $s>\beta$.
\end{corollary}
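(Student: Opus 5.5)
Corollary~\ref{cor:intro-weierstrass} will follow by verifying the hypotheses of Theorem~\ref{thm:intro-schrodinger} and Corollary~\ref{cor:intro-dual} for $\ell=\ell_b$, and then applying the non-H\"older clause of Corollary~\ref{cor:intro-dual} with $r$ just below $\beta$.

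\emph{Step 1: admissibility.} First check that $\ell_b$ is an admissible Fourier length in the sense of Definition~\ref{def:admissible}. Symmetry and $\ell_b(0)=0$ are immediate. Subadditivity follows by concatenating representations: if $n_1=\sum k_m b^m$ and $n_2=\sum k'_m b^m$, then $n_1+n_2=\sum(k_m+k'_m)b^m$, and $|k_m+k'_m|\leqslant|k_m|+|k'_m|$. The ball bound $Q_b(N)\leqslant e^{4\sqrt N}$ stated in the introduction gives $\rho=1/2$ and $c_\ell=4$. Consequently $\mathrm{DC}_b(\tau)=\mathrm{DC}_{\ell_b}(\gamma,\tau)$ for some $\gamma>0$, and the condition $\tau>4=c_\ell$ is exactly the hypothesis $\tau>c_\ell$ of Theorem~\ref{thm:intro-kam}.

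\emph{Step 2: smallness.} Fix $\sigma_0>0$ with $ae^{\sigma_0}<1$; this is possible since $a<1$. The displayed estimate in the introduction gives
\[
\|W_{a,b}\|_{\sigma_0,\ell_b}\leqslant \frac{e^{\sigma_0}}{1-ae^{\sigma_0}}\eqqcolon M<\infty.
\]
Let $\varepsilon_*$ be the constant of Theorem~\ref{thm:intro-schrodinger} for this $\sigma_0$, and set $\lambda_0=\varepsilon_*/M$. For $|\lambda|<\lambda_0$ we then have $\|\lambda W_{a,b}\|_{\sigma_0}<\varepsilon_*$. Hence all conclusions of Theorem~\ref{thm:intro-schrodinger} and Corollary~\ref{cor:intro-dual} apply, and $\lambda_0$ depends only on $(a,b,\alpha,\tau)$ through $\gamma$, $\sigma_0$, and $\varepsilon_*$.

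\emph{Step 3: divergence of polynomial weights.} Given $s>\beta$, pick $r\in(\beta,s)$ with $r<1$. We need $\lambda W_{a,b}\notin C^r(\mathbb{T},\mathbb{R})$ for $\lambda\neq0$; the last clause of Corollary~\ref{cor:intro-dual} then gives $\sum(1+|n|)^s|u_n^{(j)}|=\infty$ for every $j$ and almost every $\theta$. This non-H\"older property is the only real input beyond bookkeeping, and it is the main obstacle. I would derive it from Hardy's theorem \cite{MR1501044}, which shows that $W_{a,b}$ is nowhere H\"older of any order exceeding $\beta$. As a self-contained alternative, I would use a Fourier-coefficient argument. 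Suppose $f\in C^r$. Pair $f$ with a smooth de la Vall\'ee-Poussin-type kernel $K_m$ whose Fourier transform is supported in $\{b^{m-1}<|n|<b^{m+1}\}$ and equals $1$ at $\pm b^m$; this forces $|\widehat f(b^m)|\lesssim b^{-mr}$. For $W_{a,b}$, however, $\widehat W(\pm b^m)=a^m/2=b^{-m\beta}/2$, which is incompatible with the bound when $r>\beta$. This argument needs integer $b\geqslant2$, so that the annuli isolate the single mode $b^m$; it holds for $b\geqslant 3$ directly, and for $b=2$ one uses a narrower kernel.
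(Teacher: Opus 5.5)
Your proposal is correct and follows essentially the same route as the paper. The paper verifies $W_{a,b}\in\mathcal{A}^{(b)}_{\sigma_0}$ for some $\sigma_0$ with $ae^{\sigma_0}<1$, takes $\lambda_0=\varepsilon_*/\|W_{a,b}\|_{\sigma_0}$, and applies the non-H\"older clause of Corollary~\ref{cor:intro-dual} with $\beta<r<\min\{s,1\}$.

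The only difference is in showing $\lambda W_{a,b}\notin C^r$. The paper uses the elementary bound $\sup_n(1+|n|)^r|\widehat f(n)|<\infty$ for $f\in C^r$. Evaluated at $n=b^m$, this would require $(ab^r)^m/2$ to stay bounded, which fails since $ab^r>1$. So your kernel localization is unnecessary, and so is the $b=2$ caveat: the open annulus $b^{m-1}<|n|<b^{m+1}$ already isolates $\pm b^m$ for every $b\geqslant 2$.
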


\begin{corollary}[Residual nowhere differentiable purely absolutely continuous Cantor potentials]
\label{cor:intro-cantor}
    Under the assumptions of \hyperref[cor:intro-weierstrass]{Corollary~\ref*{cor:intro-weierstrass}}, fix $0<\sigma_{0}<-\log a$ and a non-zero $\lambda$ so small that $\|\lambda W_{a,b}\|_{\sigma_{0}}<\frac{\varepsilon_{*}}{2}$, where $\varepsilon_{*}$ is the threshold in \hyperref[thm:intro-schrodinger]{Theorem~\ref*{thm:intro-schrodinger}}. Define
    \begin{equation*}
        \mathcal{X}_{\sigma_{0}}^{1}=C^{1}(\mathbb{T},\mathbb{R})\cap\mathcal{A}_{\sigma_{0}}^{(b)}(\mathbb{T},\mathbb{R}),\qquad \|g\|_{\mathcal{X}_{\sigma_{0}}^{1}}=\|g\|_{C^{1}}+\|g\|_{\sigma_{0}}.
    \end{equation*}
    There exists $\delta_{*}>0$ such that, for every $0<\delta\leqslant\delta_{*}$, the set of $g$ in the open ball
    \begin{equation*}
        \mathcal{U}_{\delta}=\big\{g\in\mathcal{X}_{\sigma_{0}}^{1}: \|g\|_{\mathcal{X}_{\sigma_{0}}^{1}}<\delta\big\}
    \end{equation*}
    for which every bounded gap allowed by the one-frequency gap-labeling theorem is open for $V=\lambda W_{a,b}+g$ is residual and dense in $\mathcal{U}_{\delta}$. For every such $g$,
    \begin{equation*}
        V\in C^{\beta}(\mathbb{T}),\qquad \beta=-\frac{\log a}{\log b},
    \end{equation*}
    $V$ is nowhere differentiable, $\Sigma_{\alpha,V}$ is a Cantor set, and $H_{\alpha,x,V}$ has purely absolutely continuous spectrum for every phase $x$. In particular, for every $\eta>0$, one may choose such a $g$ with $\|g\|_{\mathcal{X}_{\sigma_{0}}^{1}}<\eta$.
\end{corollary}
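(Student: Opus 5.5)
The plan is to split the statement into an open-and-dense part, which gives residuality, and a list of properties that hold for every $g$ in the ball. The second part is direct. Since $\|\lambda W_{a,b}\|_{\sigma_0}<\varepsilon_*/2$, choosing $\delta_*\leqslant\varepsilon_*/2$ makes every $V=\lambda W_{a,b}+g$ with $g\in\mathcal{U}_\delta$ satisfy $\|V\|_{\sigma_0}<\varepsilon_*$. Theorem~\ref{thm:intro-schrodinger} then gives purely absolutely continuous spectrum for every phase. Regularity follows because $g\in C^1$: the sum $V$ is $C^\beta$, and it is nowhere differentiable, since otherwise $\lambda W_{a,b}=V-g$ would be differentiable somewhere, contradicting Hardy's theorem with $\lambda\neq 0$. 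Finally, once all gaps allowed by gap labeling are open, $\Sigma_{\alpha,V}$ is a Cantor set. The labels $\{k\alpha \bmod 1\}$ are dense, so the complement of the spectrum is dense, and the spectrum of an aperiodic ergodic operator has no isolated points.

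For residuality, fix $k\in\mathbb{Z}\setminus\{0\}$ and set $O_k=\{g\in\mathcal{U}_\delta:\text{the gap with label }k\text{ is open for }\lambda W_{a,b}+g\}$. Residuality of $\bigcap_k O_k$ in the Baire space $\mathcal{U}_\delta$ (an open subset of a Banach space) reduces to showing each $O_k$ is open and dense.

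\emph{Openness.} The norm of $\mathcal{X}^1_{\sigma_0}$ dominates $C^0$, and an open gap with label $k$ is a nonempty interval in the resolvent set on which $N=k\alpha \bmod 1$. This interval persists under small $C^0$ perturbations, because the spectrum is continuous in the Hausdorff sense and the IDS is continuous in $V$.

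\emph{Density.} This is the main obstacle. Given $g\in\mathcal{U}_\delta$ with gap $k$ closed at an energy $E_k$, $(\alpha,S^V_{E_k})$ is reducible to a parabolic constant. By the quantitative KAM Theorem~\ref{KAM}, and since the rotation number $k\alpha/2$ is $\mathrm{DC}$-nonresonant-compatible, the conjugacy $B\in\mathcal{A}^{\ell}_{\sigma_*}(2\mathbb{T})$ sends $S^V_{E_k}$ to $\begin{pmatrix}1&c\\0&1\end{pmatrix}$, possibly with $c=0$. I would then perturb $g\mapsto g+t\,h$ with $h$ a trigonometric polynomial, so that $h$ lies in both $C^1$ and $\mathcal{A}^{(b)}_{\sigma_0}$ and $t$ is small. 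The standard first-order computation, in the spirit of Moser–Pöschel and Puig, shows the gap opens with width comparable to $|t|\,|\langle h\,\cdot\,\text{(quadratic form in the columns of }B)\rangle|$, up to $O(t^2)$. Choosing $h$ as a low-mode trigonometric polynomial makes this average nonzero, because $B$ is non-degenerate and its entries cannot make every Fourier pairing vanish.

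The technical points I expect are two. First, the gap-opening estimate must be uniform enough that the $O(t^2)$ error, controlled via the $\mathcal{A}^{\ell}$ Banach-algebra estimates on $B$, does not cancel the linear term. Second, $g+th$ must remain in $\mathcal{U}_\delta$; this holds for small $t$ because $\mathcal{U}_\delta$ is open. If the direct perturbative computation proves awkward, the fallback is an Avila–Bochi–Damanik type argument: perturb to make the Lyapunov exponent or IDS behaviour at $E_k$ transversal. The concrete first-order formula above is the route I would try first.
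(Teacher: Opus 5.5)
Your architecture is the paper's: Baire category over the open sets $\mathcal{O}_{k,\delta}$, openness from robustness of uniform hyperbolicity, and density by a Moser--P\"oschel first-order perturbation after reducing the collapsed-gap cocycle to a constant via Corollary~\ref{PE}. Your treatment of the properties that hold for every $g$ in the ball is fine. The genuine gap is in the density step.

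You leave the reduced constant as $\begin{pmatrix}1&c\\0&1\end{pmatrix}$ ``possibly with $c=0$''. But the two-sided opening of width $\asymp|t|$ that you invoke is specific to $c=0$, and you must \emph{prove} that a collapsed gap forces the constant to be $\pm\mathrm{I}$. This comes from a pure energy shift. If $c\neq0$, apply Proposition~\ref{MP}(1) with $W\equiv-1$, so that $S^{V-t}_{E}=S^{V}_{E+t}$ and $[Wb_{11}^{2}]=-[b_{11}^{2}]<0$. This gives $(\alpha,S^{V}_{E+t})\in\mathcal{UH}$ for all small $t$ of one sign. Continuity of the rotation number then shows the gap with label $k$ has positive length, a contradiction. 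Without this argument the case $c\neq0$ is not handled at all. In that case the first-order hyperbolicity is one-sided and of size $|t|^{1/2}$, not the mechanism you describe.

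Once $c=0$, your nondegeneracy criterion still needs correcting. Conjugate $\pm\mathrm{I}\,e^{tP}$, with $P=WB^{-1}\begin{pmatrix}0&0\\1&0\end{pmatrix}B$, by $e^{tY}$, where $Y(\cdot+\alpha)-Y=P-[P]$. It is this conjugation that makes the oscillating part $O(t^{2})$. Solving it requires the adapted Diophantine condition with a loss of width (Lemma~\ref{paraperturbation}); Banach-algebra bounds on $B$ alone do not suffice. One then gets uniform hyperbolicity for small $t\neq0$ as soon as $\det[P]<0$, i.e.\ $[Wy_{1}]^{2}<[Wy_{2}]^{2}+[Wy_{3}]^{2}$, with $y_{1},y_{2},y_{3}$ as in the paper.

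Adding $th$ also moves the energy carrying label $k$. So you must work with $W=h-e_{0}$, $e_{0}=[hy_{1}]/[y_{1}]$, using $S^{V+tW}_{E}=S^{V+th}_{E+te_{0}}$. The correct condition is therefore that $([hy_{1}],[hy_{2}],[hy_{3}])$ is not proportional to $([y_{1}],[y_{2}],[y_{3}])$. ``Some average is nonzero'' is not enough: $h\equiv1$ has nonzero averages but only translates the energy and opens nothing.

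A suitable trigonometric polynomial $h$ exists for the following reason. If none worked, $y_{2}$ and $y_{3}$ would be constant multiples of $y_{1}$, forcing $(b_{11},b_{12})$ to have constant direction. A constant $\mathrm{SL}(2,\mathbb{R})$ conjugation would then give $b_{12}\equiv0$. The second column of $S^{V}_{E}(x)B(x)=\pm B(x+\alpha)$ then forces $b_{22}\equiv0$, contradicting $\det B=1$.

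Finally, in the openness step, continuity of the IDS alone does not fix the label of the persisting gap, because the labels $\{k\alpha\bmod\mathbb{Z}\}$ are dense. Use, as the paper does, a path $V_{s}$ together with the fact that $s\mapsto2\varrho(\alpha,S^{V_{s}}_{E_{0}})$ is continuous with values in a countable set, hence constant.
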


\subsection{Significance of the results}

To the best of our knowledge, \hyperref[thm:intro-kam]{Theorem~\ref*{thm:intro-kam}} is the first reducibility and almost reducibility theorem for nowhere differentiable quasiperiodic cocycles and for continuous cocycles outside every positive H\"older class. Likewise, \hyperref[thm:intro-schrodinger]{Theorem~\ref*{thm:intro-schrodinger}} and \hyperref[cor:intro-cantor]{Corollary~\ref*{cor:intro-cantor}} provide the first purely absolutely continuous and purely absolutely continuous Cantor examples at this level of regularity. In particular, \hyperref[cor:intro-cantor]{Corollary~\ref*{cor:intro-cantor}} answers Problem~9.14.6 of Damanik and Fillman \cite{MR4840232}.

For the classical Weierstrass potential, the vanishing of the Lyapunov exponent on the spectrum rules out the positive Lyapunov assertion in Conjecture~2.6 of \cite{MR5081746} for every sufficiently small nonzero coupling. 

The Aubry dual result exhibits a complementary phenomenon: the hopping may have no positive polynomial moment, while the eigenfunctions remain exponentially localized in the metric induced by $\ell$. This adapted localization may be much weaker than exponential or stretched exponential localization in the Euclidean lattice distance.

\section{Admissible Fourier lengths and adapted Fourier algebras}\label{sec:algebra}

The KAM scheme is organized around a single Fourier length $\ell$. The same length controls Fourier decay, truncation, the growth of the set of relevant modes, and the small divisors. We first introduce the abstract Fourier scale and its basic algebra estimates, and then formulate the corresponding full measure arithmetic condition. We next discuss the $b$-adic model underlying the Weierstrass potential and a general sparse construction. These are examples of the abstract theory, which applies to every function in the corresponding adapted Fourier algebra.

\subsection{Adapted lengths and Fourier algebras}

\begin{definition}[Admissible Fourier length]\label{def:admissible}
    Fix $0<\rho<1$. A function \begin{equation*}
        \ell:\mathbb{Z}\to[0,\infty)
    \end{equation*} is called an \emph{admissible Fourier length of exponent $\rho$} if
    \begin{equation*}
        \ell(0)=0,\qquad \ell(-n)=\ell(n),\qquad \ell(n_{1}+n_{2})\leqslant\ell(n_{1})+\ell(n_{2}),
    \end{equation*}
    for all $n_{1},n_{2}\in\mathbb{Z}$. We further require $\ell(n)>0$ for $n\neq 0$ and constants $C_{\ell}\geqslant1$ and $c_{\ell}>0$ such that\footnote{The proof below uses the explicit power-subexponential bound in \eqref{eq:abstract-ball}. A formulation under the weaker assumption $\log Q_{\ell}(N)=o(N)$ would require a common sublinear majorant for the Fourier-ball growth and the small-divisor loss, together with a corresponding redesign of the KAM scales. No such extension is claimed here.}
    \begin{equation}\label{eq:abstract-ball}
        Q_{\ell}(N)\coloneqq\#\{n\in\mathbb{Z}:\ell(n)\leqslant N\}\leqslant C_{\ell}\exp(c_{\ell}N^{\rho}),\qquad N\geqslant 1.
    \end{equation}
\end{definition}

\begin{lemma}[Canonical half-lattice extension]\label{lem:half-lattice-extension}
    Let $\ell$ be an admissible Fourier length on $\mathbb{Z}$. Define
    \begin{equation}\label{eq:canonical-half-extension}
        \bar{\ell}(r)
        =\inf_{m\in\mathbb{Z}}
        \bigg\{\ell(m)+\frac{\ell(1)}{2}|2r-2m|\bigg\},
        \qquad r\in\frac{1}{2}\mathbb{Z}.
    \end{equation}
    Then $\bar{\ell}$ is finite-valued, positive, symmetric, and subadditive on $\frac{1}{2}\mathbb{Z}$, and
    \begin{equation}\label{eq:half-extension-restriction}
        \bar{\ell}(n)=\ell(n),\qquad n\in\mathbb{Z},
        \qquad
        \bar{\ell}\bigg(\frac{1}{2}\bigg)=\frac{\ell(1)}{2}.
    \end{equation}
\end{lemma}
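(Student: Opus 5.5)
The plan is to treat $\bar{\ell}$ as an infimal convolution of $\ell$ on $\mathbb{Z}$ with the weight $c|s|$, where $c=\ell(1)$ and $s=r-m\in\frac12\mathbb{Z}$, since $\frac{\ell(1)}{2}|2r-2m|=\ell(1)|r-m|$. Finiteness is immediate: take $m=0$, which gives $\bar{\ell}(r)\leqslant \ell(1)|r|<\infty$. Symmetry follows by substituting $m\mapsto -m$ and using $\ell(-m)=\ell(m)$.

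For subadditivity, fix $r_{1},r_{2}\in\frac12\mathbb{Z}$ and any $m_{1},m_{2}\in\mathbb{Z}$. Using $m=m_{1}+m_{2}$ as a competitor for $\bar{\ell}(r_{1}+r_{2})$ and applying subadditivity of $\ell$ together with the triangle inequality for $|\cdot|$, we get
\begin{equation*}
\bar{\ell}(r_{1}+r_{2})\leqslant \ell(m_{1})+\ell(m_{2})+\ell(1)|r_{1}-m_{1}|+\ell(1)|r_{2}-m_{2}|.
\end{equation*}
Taking the infimum over $m_{1}$ and $m_{2}$ separately then gives subadditivity.

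For the restriction identity, take $n\in\mathbb{Z}$. The choice $m=n$ gives $\bar{\ell}(n)\leqslant\ell(n)$. Conversely, subadditivity of $\ell$ along the integer steps $\pm1$ gives $\ell(n)\leqslant \ell(m)+|n-m|\,\ell(1)$ for all integers $m$, so every competitor is at least $\ell(n)$ and hence $\bar{\ell}(n)=\ell(n)$. For $r=\frac12$, every $m$ satisfies $|\frac12-m|\geqslant\frac12$, so each term is at least $\frac{\ell(1)}{2}$, and $m=0$ attains this value. Thus $\bar{\ell}(\frac12)=\frac{\ell(1)}{2}$.

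Positivity is the only point needing care; by positivity we mean $\bar{\ell}(r)>0$ for $r\neq0$, with $\bar{\ell}(0)=\ell(0)=0$. For integer $r\neq 0$ it follows from the restriction identity and $\ell(r)>0$. For $r\in\frac12+\mathbb{Z}$, every integer $m$ satisfies $|r-m|\geqslant\frac12$, so each term in the infimum is at least $\frac{\ell(1)}{2}$, which is positive because $\ell(1)>0$. Hence $\bar{\ell}(r)\geqslant\frac{\ell(1)}{2}>0$. In this case the infimum is in fact a minimum over the finitely many relevant $m$, but the lower bound makes that unnecessary. I expect no real obstacle here. The only subtle point is noticing that the comparison $\ell(n)\leqslant\ell(m)+\ell(1)|n-m|$ is exactly what makes the restriction exact.
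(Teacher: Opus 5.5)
Your proposal is correct and follows the paper's argument essentially step for step. You test subadditivity at $m_1+m_2$, and you derive the restriction identity from $\ell(n)\leqslant\ell(m)+|n-m|\,\ell(1)$. You also get positivity and $\bar{\ell}(\tfrac12)=\tfrac{\ell(1)}{2}$ from $|2r-2m|\geqslant 1$ for half-integers, just as the paper does.
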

\begin{proof}
    Finiteness and symmetry are immediate. For $n,m\in\mathbb Z$, subadditivity gives
    \begin{equation*}
        \ell(n)\leqslant \ell(m)+\ell(n-m)
        \leqslant \ell(m)+|n-m|\ell(1).
    \end{equation*}
    Taking the infimum over $m$, and then choosing $m=n$, proves
    $\bar{\ell}(n)=\ell(n)$. If $r\in\mathbb Z+1/2$, then
    $|2r-2m|\geqslant1$ for every $m$, which proves positivity and the formula
    for $\bar{\ell}(1/2)$. Thus \eqref{eq:half-extension-restriction} holds. Subadditivity follows by testing
    \eqref{eq:canonical-half-extension} at $m_1+m_2$ and using the triangle
    inequalities for $\ell$ and $|\cdot|$.
\end{proof}

\begin{definition}[Adapted Fourier algebra]\label{def:adapted-algebra}
    Let $\ell$ be an admissible Fourier length. For $\sigma>0$, the space
    \begin{equation*}
        \mathcal{A}_{\sigma}^{\ell}=\bigg\{f(x)=\sum_{n\in\mathbb{Z}}\widehat{f}(n)e^{2\pi i n x}: \|f\|_{\sigma,\ell}\coloneqq\sum_{n\in\mathbb{Z}}|\widehat{f}(n)|e^{\sigma\ell(n)}<\infty\bigg\}
    \end{equation*}
    is called the \emph{adapted Fourier algebra of width $\sigma$ associated with $\ell$}. The nested family
    \begin{equation*}
        \mathcal{A}^{\ell}=\{\mathcal{A}_{\sigma}^{\ell}\}_{\sigma>0}
    \end{equation*}
    is called the \emph{adapted Fourier algebra scale generated by $\ell$ }. For matrix-valued functions, we use the same notation with the Euclidean operator norm on the Fourier coefficients. In particular, this norm is submultiplicative and  $\|P^{-1}\|=\|P\|$ for every $P\in\mathrm{SL}(2,\mathbb{C})$. When no confusion is possible, we abbreviate $\|\cdot\|_{\sigma,\ell}$ to $\|\cdot\|_{\sigma}$.
\end{definition}

Now we provide the basic algebra estimates.
\begin{proposition}\label{banachalgebra}
    Let $\ell$ satisfy the symmetry, subadditivity, and finiteness assumptions in \hyperref[def:admissible]{Definition~\ref*{def:admissible}}, and let $\sigma>0$. Then 
    \begin{enumerate}
        \item $\mathcal{A}_{\sigma}^{\ell}$ is a Banach algebra and
    \begin{equation*}
        \|fg\|_{\sigma}\leqslant \|f\|_{\sigma}\|g\|_{\sigma}\quad \text{for any} \quad f,g\in\mathcal{A}_{\sigma}^{\ell}.
    \end{equation*}

    \item If $0<\sigma'<\sigma$ and 
    \begin{equation*}
        [\mathcal{T}_{N}f](x)\coloneqq\sum_{\ell(n)\leqslant N}\widehat{f}(n)e^{2\pi inx}, \qquad \mathcal{R}_{N}\coloneqq \mathrm{Id}-\mathcal{T}_{N},
    \end{equation*}
    then
    \begin{equation}\label{resdest}
        \|\mathcal{R}_{N}f\|_{\sigma'}\leqslant e^{-(\sigma-\sigma')N}\|f\|_{\sigma}.
    \end{equation}

    \item Moreover, if $\|f\|_{\sigma}\leqslant 1/2$, then
    \begin{equation*}
        \|e^{f}-\mathrm{I}-f\|_{\sigma}\leqslant 2\|f\|_{\sigma}^{2},\qquad \|e^{\pm f}\|_{\sigma}\leqslant 1+2\|f\|_{\sigma}.
    \end{equation*}
    \end{enumerate}
\end{proposition}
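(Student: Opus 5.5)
The plan is to verify each of the three items directly from the definition of the weighted norm, using only symmetry and subadditivity of $\ell$. No ball-growth or arithmetic input is needed.

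For (1), I would first check completeness. The map $f\mapsto(\widehat f(n))_n$ is an isometry of $\mathcal{A}_\sigma^\ell$ onto the weighted sequence space $\ell^1(\mathbb{Z},e^{\sigma\ell(n)})$. Since $\ell\geqslant0$, the weight is at least $1$. Hence absolutely convergent Fourier series define continuous functions, and the space is a Banach space. The matrix-valued case is identical, with the operator norm on coefficients.

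For submultiplicativity, I would write $\widehat{fg}(n)=\sum_{k}\widehat f(k)\widehat g(n-k)$. The right-hand side converges absolutely because both sequences are in $\ell^1$. Subadditivity gives $e^{\sigma\ell(n)}\leqslant e^{\sigma\ell(k)}e^{\sigma\ell(n-k)}$. Summing over $n$ and using Tonelli to interchange the sums yields $\|fg\|_\sigma\leqslant\|f\|_\sigma\|g\|_\sigma$. For matrices, one additionally uses $\|PQ\|\leqslant\|P\|\|Q\|$ on the coefficients.

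For (2), $\mathcal{R}_N f$ retains only modes with $\ell(n)>N$. On those modes,
\begin{equation*}
e^{\sigma'\ell(n)}=e^{\sigma\ell(n)}e^{-(\sigma-\sigma')\ell(n)}\leqslant e^{-(\sigma-\sigma')N}e^{\sigma\ell(n)}.
\end{equation*}
Summing over these modes gives \eqref{resdest}.

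For (3), the norm is submultiplicative by (1), and the constant $\mathrm{I}$ has norm $1$ because $\ell(0)=0$. Therefore $\|f^k\|_\sigma\leqslant\|f\|_\sigma^k$, and the exponential series converges in the Banach algebra. Put $t=\|f\|_\sigma\leqslant 1/2$. Then
\begin{equation*}
\|e^f-\mathrm{I}-f\|_\sigma\leqslant\sum_{k\geqslant2}\frac{t^k}{k!}=t^2\sum_{j\geqslant0}\frac{t^j}{(j+2)!}\leqslant t^2\,(e^{t}-1-t)/t^2\big|_{t\le 1/2}.
\end{equation*}
It is simpler to bound $\sum_{j\geqslant0}t^j/(j+2)!\leqslant\tfrac12\sum_j t^j\leqslant 1\leqslant2$, which gives $2t^2$.

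Then $\|e^{\pm f}\|_\sigma\leqslant 1+t+2t^2\leqslant1+2t$, using $2t^2\leqslant t$ for $t\leqslant1/2$. The same bound applies to $-f$, which has the same norm.

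The only mildly delicate point is justifying the interchange of summations in the convolution estimate and the convergence of the exponential series. Both follow from absolute convergence in weighted $\ell^1$, so I expect no real obstacle.
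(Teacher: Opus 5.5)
Your proposal is correct and follows the paper's proof essentially step by step: an isometry onto weighted $\ell^{1}$ for completeness, subadditivity inside the convolution for submultiplicativity, the pointwise comparison $e^{\sigma'\ell(n)}\leqslant e^{-(\sigma-\sigma')N}e^{\sigma\ell(n)}$ on modes with $\ell(n)>N$ for the truncation bound, and a term-by-term bound on the exponential series. In that last step the paper uses $\sum_{m\geqslant 2}t^{m}/m!\leqslant t^{2}/(1-t)\leqslant 2t^{2}$ where you use $1/(j+2)!\leqslant 1/2$, which is an equivalent estimate; you should delete the aborted intermediate expression involving $(e^{t}-1-t)/t^{2}$, since the bound you actually rely on afterwards is the valid one.
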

\begin{proof}
    Define
    \begin{equation*}
        T:\mathcal{A}_{\sigma}^{\ell}\to \ell^{1}(\mathbb{Z}),
        \qquad
        T(f)=\bigl(\widehat{f}(n)e^{\sigma\ell(n)}\bigr)_{n\in\mathbb{Z}}.
    \end{equation*}
    The map $T$ is an isometric isomorphism onto $\ell^{1}(\mathbb{Z})$.
    Hence $\mathcal{A}_{\sigma}^{\ell}$ is complete. For
    $f,g\in\mathcal{A}_{\sigma}^{\ell}$, subadditivity of $\ell$ gives
    \begin{equation*}
        \begin{aligned}
            \|fg\|_{\sigma}
            &\leqslant
            \sum_{n\in\mathbb{Z}}\sum_{k\in\mathbb{Z}}
            |\widehat f(k)|\,|\widehat g(n-k)|e^{\sigma\ell(n)}\\
            &\leqslant
            \sum_{n\in\mathbb{Z}}\sum_{k\in\mathbb{Z}}
            |\widehat f(k)|e^{\sigma\ell(k)}
            |\widehat g(n-k)|e^{\sigma\ell(n-k)}\\
            &\leqslant \|f\|_{\sigma}\|g\|_{\sigma}.
        \end{aligned}
    \end{equation*}
    This proves the first assertion.

    If $\ell(n)>N$, then
    \begin{equation*}
        e^{\sigma'\ell(n)}
        \leqslant e^{-(\sigma-\sigma')N}e^{\sigma\ell(n)}.
    \end{equation*}
    Therefore,
    \begin{equation*}
        \|\mathcal{R}_{N}f\|_{\sigma'}
        \leqslant e^{-(\sigma-\sigma')N}
        \sum_{\ell(n)>N}|\widehat f(n)|e^{\sigma\ell(n)}
        \leqslant e^{-(\sigma-\sigma')N}\|f\|_{\sigma},
    \end{equation*}
    which is \eqref{resdest}.

    Finally, the Banach algebra property implies that the exponential series
    converges absolutely. If $\|f\|_{\sigma}\leqslant 1/2$, then
    \begin{equation*}
        \|e^{f}-\mathrm{I}-f\|_{\sigma}
        \leqslant
        \sum_{m=2}^{\infty}\frac{\|f\|_{\sigma}^{m}}{m!}
        \leqslant
        \frac{\|f\|_{\sigma}^{2}}{1-\|f\|_{\sigma}}
        \leqslant 2\|f\|_{\sigma}^{2}.
    \end{equation*}
    The same estimate applied to $-f$ yields
    \begin{equation*}
        \|e^{\pm f}\|_{\sigma}
        \leqslant 1+\|f\|_{\sigma}+2\|f\|_{\sigma}^{2}
        \leqslant 1+2\|f\|_{\sigma}.
    \end{equation*}
\end{proof}

For a function defined on $2\mathbb{T}$, let $\bar{\ell}$ be the extension in \hyperref[lem:half-lattice-extension]{Lemma~\ref*{lem:half-lattice-extension}} and write
\begin{equation*}
    f(x)=\sum_{r\in\frac{1}{2}\mathbb{Z}}\widehat{f}(r)e^{2\pi \mathrm{i}rx},
    \qquad
    \|f\|_{\sigma,\ell}
    =\sum_{r\in\frac{1}{2}\mathbb{Z}}|\widehat{f}(r)|e^{\sigma\bar{\ell}(r)}.
\end{equation*}
We denote the resulting Banach algebra by $\mathcal{A}_{\sigma}^{\ell}(2\mathbb{T})$. Thus, in every space on $2\mathbb{T}$, the symbol $\ell$ in the norm means its canonical extension $\bar{\ell}$ and the bar is suppressed only to avoid overloading the notation. The same convolution proof gives the Banach algebra estimates.

% \begin{remark}
% (Hilbert-space formulation)
% The adapted Fourier algebra defined in \hyperref[def:adapted-algebra]{Definition~\ref*{def:adapted-algebra}} can be replaced by
%  \begin{equation*}
%        \mathcal{H}_{\sigma}^{\ell}=\bigg\{f(x)=\sum_{n\in\mathbb{Z}}\widehat{f}(n)e^{2\pi i n x}: \|f\|_{\mathcal{H}_{\sigma}^{\ell}}\coloneqq \bigg(\sum_{n\in\mathbb{Z}}|\widehat{f}(n)|^2e^{2\sigma\ell(n)}\bigg)^{1/2}<\infty\bigg\}.
%     \end{equation*}
% This gives a Hilbert-space formulation.
% In general, $\mathcal{H}_{\sigma}^{\ell} $  need not be an algebra at a fixed value of $\sigma$. Nevertheless, under the subexponential growth assumption on the Fourier balls, then $\mathcal{H}^{\ell}= \{\mathcal{H}^{\ell}_{\sigma}\}_{\sigma>0}$ satisfies that, for every $0<\sigma'<\sigma$,
% \begin{equation*}
%     \|fg\|_{\mathcal{H}_{\sigma'}^{\ell}} \leqslant C_{\sigma-\sigma'} \|f\|_{\mathcal{H}_{\sigma}^{\ell}} \|g\|_{\mathcal{H}_{\sigma}^{\ell}}.
% \end{equation*}
% Thus $\mathcal{A}_{\sigma}^{\ell}\subseteq \mathcal{H}_{\sigma}^{\ell}\subseteq \mathcal{A}_{\sigma'}^{\ell}$. Indeed, the loss of adapted Fourier width is compensated by the summability of $e^{-(\sigma-\sigma')\ell(n)}$, which follows from the subexponential growth of the Fourier balls. Thus the theory could alternatively be formulated on $\mathcal{H}^{\ell}$, with an arbitrarily small loss of adapted Fourier width in the nonlinear estimates. We do not use this Hilbert-space formulation in the present paper, since the weighted $\ell^{1}(\mathbb{Z})$ Banach algebra introduced above is sufficient for our purposes and leads to simpler KAM estimates.
% \end{remark}

\subsection{The adapted arithmetic class}

The arithmetic condition is measured in exactly the same scale as the Fourier length. For $z\in\mathbb{C}$, set
\begin{equation*}
    \|z\|_{\mathbb{T}}=\inf_{m\in\mathbb{Z}}|z-m|.
\end{equation*}
For $\tau>0$ and $\gamma>0$, define
\begin{equation*}
    \begin{aligned}
        \mathrm{DC}_{\ell}(\gamma,\tau)&=\Big\{\alpha\in\mathbb{T}: \|n\alpha\|_{\mathbb{T}}\geqslant \gamma e^{-\tau\ell(n)^{\rho}} \text{ for every }n\in\mathbb{Z}\setminus\{0\}\Big\},\\
        \mathrm{DC}_{\ell}(\tau)&=\bigcup_{\gamma>0}\mathrm{DC}_{\ell}(\gamma,\tau).
    \end{aligned}
\end{equation*}

\begin{proposition}[Full measure]\label{prop:full-measure}
    If $\ell$ is admissible and $\tau>c_{\ell}$, then $\mathrm{DC}_{\ell}(\tau)$ has full Lebesgue measure.
\end{proposition}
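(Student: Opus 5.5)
Fix $\tau>c_{\ell}$. It suffices to show that the complement of $\mathrm{DC}_{\ell}(\gamma,\tau)$ in $\mathbb{T}$ has Lebesgue measure tending to $0$ as $\gamma\to 0$. Indeed, $\mathrm{DC}_{\ell}(\tau)$ is the increasing union over $\gamma$, so its complement is $\bigcap_{\gamma>0}\big(\mathbb{T}\setminus\mathrm{DC}_{\ell}(\gamma,\tau)\big)$. For each $n\neq 0$, the bad set
\begin{equation*}
    E_{n}(\gamma)=\big\{\alpha\in\mathbb{T}:\|n\alpha\|_{\mathbb{T}}<\gamma e^{-\tau\ell(n)^{\rho}}\big\}
\end{equation*}
is a union of $|n|$ intervals of length $2\gamma e^{-\tau\ell(n)^{\rho}}/|n|$. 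Hence
\begin{equation*}
    |E_{n}(\gamma)|\leqslant 2\gamma e^{-\tau\ell(n)^{\rho}}.
\end{equation*}
The factor $|n|$ cancels exactly, which is what makes the condition depend only on $\ell$. It then suffices to prove the key estimate
\begin{equation*}
    S\coloneqq\sum_{n\neq 0}e^{-\tau\ell(n)^{\rho}}<\infty,
\end{equation*}
since the complement of $\mathrm{DC}_{\ell}(\gamma,\tau)$ then has measure at most $2\gamma S\to 0$.

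To bound $S$, I will group the modes by shells of adapted length. For integers $k\geqslant 1$, the shell $\{n:k-1<\ell(n)\leqslant k\}$ has at most $Q_{\ell}(k)\leqslant C_{\ell}e^{c_{\ell}k^{\rho}}$ elements by \eqref{eq:abstract-ball}, and on it $e^{-\tau\ell(n)^{\rho}}\leqslant e^{-\tau(k-1)^{\rho}}$. The modes with $\ell(n)\leqslant 1$ are finitely many by the same bound, so they contribute a finite amount. Since $\ell(n)>0$ for $n\neq 0$, every nonzero $n$ lies in some shell.

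The main (mild) obstacle is comparing $(k-1)^{\rho}$ with $k^{\rho}$. Since $0<\rho<1$, we have $k^{\rho}-(k-1)^{\rho}\leqslant 1$, so the shell $k$ contributes at most
\begin{equation*}
    C_{\ell}e^{\tau}e^{-(\tau-c_{\ell})k^{\rho}}.
\end{equation*}
The series $\sum_{k}e^{-(\tau-c_{\ell})k^{\rho}}$ converges because $\tau>c_{\ell}$ and stretched exponentials are summable; for instance, $e^{-\delta k^{\rho}}\leqslant C_{\delta,\rho}k^{-2}$. Alternatively, one could use dyadic shells $2^{j}<\ell(n)\leqslant 2^{j+1}$, which requires $\tau>2^{\rho}c_{\ell}$, so the unit shells are the right choice under the stated hypothesis $\tau>c_{\ell}$. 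This gives $S<\infty$ and completes the plan.
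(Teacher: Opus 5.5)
Your proof is correct and follows essentially the paper's argument: both bound each bad set by $2\times$(threshold) using the cancellation of the factor $|n|$, group modes into unit shells $\{k-1<\ell(n)\leqslant k\}$, and combine the ball-growth bound with $k^{\rho}-(k-1)^{\rho}\leqslant 1$ to obtain summability when $\tau>c_{\ell}$. The only difference is the last step. You use the union bound $\operatorname{Leb}(\mathbb{T}\setminus\mathrm{DC}_{\ell}(\gamma,\tau))\leqslant 2\gamma S$ and let $\gamma\to 0$. The paper instead applies Borel--Cantelli with an auxiliary exponent $c_{\ell}<\tau_{0}<\tau$ and absorbs the finitely many exceptions into $\gamma$. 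Your finish is slightly more direct and gives a quantitative bound.
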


\begin{proof}
    Choose $\tau_{0}$ with $c_{\ell}<\tau_{0}<\tau$. For fixed $n\neq 0$,
    \begin{equation*}
        \operatorname{Leb}\big\{\alpha\in\mathbb{T}:\|n\alpha\|_{\mathbb{T}}<e^{-\tau_{0}\ell(n)^{\rho}}\big\}\leqslant 2e^{-\tau_{0}\ell(n)^{\rho}}.
    \end{equation*}
    For each $N\geqslant1$, let
\begin{equation*}
    \Theta_{N}
    =
    \bigcup_{N-1<\ell(n)\leqslant N}
    \{\alpha\in\mathbb{T}: \|n\alpha\|_{\mathbb{T}}<e^{-\tau_{0} \ell(n)^{\rho}}\}.
\end{equation*}
Then by \eqref{eq:abstract-ball}, we obtain
\begin{equation*}
    \sum_{N\geqslant 1}\operatorname{Leb}(\Theta_{N})\leqslant 2C_{\ell}\sum_{N\geqslant 1} \exp(c_{\ell} N^{\rho}-\tau_{0}(N-1)^{\rho})<\infty.
\end{equation*}
    The Borel--Cantelli lemma shows that almost every $\alpha$ violates the desired inequality only for finitely many $n$. For such an irrational $\alpha$, a positive constant $\gamma$ absorbs the finite exceptional set and the change from $\tau_{0}$ to $\tau$.
\end{proof}

\subsection{The \texorpdfstring{$b$}{b}-adic model}

For an integer $b\geqslant 2$, define
\begin{equation*}
    \begin{aligned}
        \ell_{b}(n)=\inf\bigg\{\sum_{m\geqslant 0}(m+1)|k_{m}|: n=\sum_{m\geqslant 0}k_{m}b^{m},\quad k_{m}\in\mathbb{Z}\bigg\}.
    \end{aligned}
\end{equation*}
The infimum exists because the sums are nonnegative integers and the representation using $k_{0}=n$ is always available. When a $2\mathbb{T}$ space is used, $\ell_b$ is understood through the canonical extension in \hyperref[lem:half-lattice-extension]{Lemma~\ref*{lem:half-lattice-extension}}.

\begin{proposition}[Admissibility and growth of $\ell_b$]\label{prop:b-adic}
    Let $b\geqslant2$. Then $\ell_b$ is an admissible Fourier length on $\mathbb{Z}$ with exponent $\rho=1/2$, and
    \begin{equation}\label{eq:b-ball}
        Q_{b}(N)\coloneqq \#\{n:\ell_{b}(n)\leqslant N\}\leqslant e^{4\sqrt{N}}.
    \end{equation}
    Moreover, for every $n\in\mathbb{Z}$,
    \begin{equation}\label{eq:log-square}
        c_{b}\big(\log(1+|n|)\big)\leqslant\ell_{b}(n)\leqslant C_{b}\big(\log(1+|n|)\big)^{2},
    \end{equation}
    where $c_{b}=1/\log b$ and $C_{b}=10 b$.
\end{proposition}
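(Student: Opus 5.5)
The plan is to check the axioms of Definition~\ref{def:admissible} directly from the definition of $\ell_b$ as an infimum over representations, then handle \eqref{eq:b-ball} and \eqref{eq:log-square}. Throughout, call a finitely supported integer sequence $k=(k_m)_{m\geqslant0}$ a \emph{representation} of $n$ if $n=\sum k_m b^m$, and call $w(k)=\sum(m+1)|k_m|$ its \emph{weight}. Since weights are nonnegative integers, the infimum defining $\ell_b$ is attained.

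\textbf{Admissibility.} The zero sequence gives $\ell_b(0)=0$. Replacing $k$ by $-k$ preserves the weight, so $\ell_b(-n)=\ell_b(n)$. For subadditivity, take optimal representations $k$ of $n_1$ and $k'$ of $n_2$. Then $k+k'$ represents $n_1+n_2$, and the triangle inequality for $|\cdot|$ gives $w(k+k')\leqslant w(k)+w(k')$. For positivity, if $n\neq0$ then every representation has some $k_m\neq0$, so its weight is at least $1$.

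\textbf{Ball growth.} Every $n$ with $\ell_b(n)\leqslant N$ has a representation of weight at most $N$. Hence $Q_b(N)$ is bounded by the number $R(N)$ of sequences $k$ with $w(k)\leqslant N$. The generating function of these sequences by weight is
\begin{equation*}
    F(q)=\prod_{j\geqslant1}\big(1+2q^{j}+2q^{2j}+\cdots\big)=\prod_{j\geqslant1}\frac{1+q^{j}}{1-q^{j}}.
\end{equation*}
For $0<q<1$ we have $q^{w-N}\geqslant1$ whenever $w\leqslant N$, so $R(N)\leqslant q^{-N}F(q)$. Put $q=e^{-t}$ and expand $\log\frac{1+x}{1-x}=2\sum_{i\text{ odd}}x^i/i$. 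This gives
\begin{equation*}
    \log F(q)=2\sum_{i\text{ odd}}\frac{1}{i}\,\frac{q^i}{1-q^i}.
\end{equation*}
Using $q^i/(1-q^i)\leqslant 1/(it)$, we get
\begin{equation*}
    \log F(q)\leqslant \frac{2}{t}\sum_{i\text{ odd}}\frac{1}{i^2}=\frac{\pi^2}{4t}.
\end{equation*}
Therefore $R(N)\leqslant \exp\big(Nt+\pi^2/(4t)\big)$. Choosing $t=\pi/(2\sqrt N)$ yields $R(N)\leqslant e^{\pi\sqrt N}\leqslant e^{4\sqrt N}$. This proves \eqref{eq:b-ball}, and so $\rho=1/2$ with $C_\ell=1$ and $c_\ell=4$.

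\textbf{Lower bound in \eqref{eq:log-square}.} I will show that any representation of weight $w$ satisfies $1+|n|\leqslant b^{w}$. Two facts suffice. First, $b^m\leqslant b^{m+1}-1$. Second, $(b^x-1)+(b^y-1)\leqslant b^{x+y}-1$ for integers $x,y\geqslant0$. Combining them,
\begin{equation*}
    |n|\leqslant\sum_m|k_m|b^m\leqslant\sum_m|k_m|\big(b^{m+1}-1\big)\leqslant b^{w}-1.
\end{equation*}
Taking logarithms gives $\ell_b(n)\geqslant \log(1+|n|)/\log b$.

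\textbf{Upper bound in \eqref{eq:log-square}.} Use the standard base-$b$ expansion of $|n|$. It has $d\leqslant 1+\log|n|/\log b$ digits, each at most $b-1$, so
\begin{equation*}
    \ell_b(n)\leqslant (b-1)\sum_{m<d}(m+1)=(b-1)\frac{d(d+1)}{2}.
\end{equation*}
It remains to show $(b-1)d(d+1)/2\leqslant 10b\,(\log(1+|n|))^2$ for $n\neq0$. This is elementary bookkeeping: use $\log(1+|n|)\geqslant\log2$ and $\log b\geqslant\log2$ to bound $d\leqslant 1+\log(1+|n|)/\log 2\leqslant C\log(1+|n|)$ for a small absolute constant $C$.

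The only step needing care is the ball-growth estimate, specifically obtaining the explicit constant $4$. The generating-function and saddle-point bound above handles it, since it gives the constant $\pi<4$.
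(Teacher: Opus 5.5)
Your proposal is correct and follows essentially the same route as the paper. The axioms are checked directly from the infimum over representations. $Q_b(N)$ is bounded by the same weighted count $e^{tN}\prod_{j\geqslant1}\frac{1+e^{-tj}}{1-e^{-tj}}$. The lower bound in \eqref{eq:log-square} comes from a superadditivity inequality for powers of $b$, and the upper bound from the base-$b$ expansion of $|n|$.

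The only difference is cosmetic: your odd-power expansion of $\log\frac{1+x}{1-x}$ gives the constant $\pi$, slightly sharper than the paper's $1+\pi^{2}/6\leqslant 4$ with $t=2N^{-1/2}$, though this sharpening is not needed.
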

\begin{proof}
    It is clear that $0\leqslant\ell_{b}(n)<\infty$ for every $n\in\mathbb{Z}$, that $\ell_{b}(0)=0$, and that $\ell_{b}(n)>0$ for $n\neq 0$. 
    
    If $n=\sum_{m\geqslant 0}k_{m}b^{m}$, then $-n=\sum_{m\geqslant 0}(-k_{m})b^{m}$. Obviously
    \begin{equation*}
        \sum_{m\geqslant 0}(m+1)|k_{m}|=\sum_{m\geqslant 0}(m+1)|-k_{m}|.
    \end{equation*}
    Taking the infimum over all valid sequences $\{k_{m}\}$ on both sides proves that $\ell_{b}(-n)=\ell_{b}(n)$.

    We next prove the triangle inequality for $\ell_{b}$. Write
    \begin{equation*}
        n_{1}=\sum_{m\geqslant 0}k_{m}b^{m},\quad n_{2}=\sum_{m\geqslant 0}r_{m}b^{m}.
    \end{equation*}
    Then
    \begin{equation*}
        n_{1}+n_{2}=\sum_{m\geqslant 0} (k_{m}+r_{m})b^{m}.
    \end{equation*}
    Thus,
    \begin{equation*}
        \ell_{b}(n_{1}+n_{2})\leqslant \sum_{m\geqslant 0}(m+1)|k_{m}+r_{m}|\leqslant \sum_{m\geqslant 0}(m+1)|k_{m}|+\sum_{m\geqslant 0}(m+1)|r_{m}|.
    \end{equation*}
    Taking the infimum over all valid sequences $\{k_{m}\}$ and $\{r_{m}\}$ yields $\ell_{b}(n_{1}+n_{2})\leqslant\ell_{b}(n_{1})+\ell_{b}(n_{2})$. Thus $\ell_{b}$ is subadditive.

    We next count the ball. Assume $N\geqslant 1$ since \eqref{eq:b-ball} is trivial when $N=0$. Assume that $\ell_{b}(n)\leqslant N$. Then there exists a finitely supported sequence $\{k_{m}\}_{m\geqslant 0}$ such that
    \begin{equation*}
        n=\sum_{m\geqslant 0}k_{m}b^{m}\quad \text{and}\quad \sum_{m\geqslant 0}(m+1)|k_{m}|\leqslant N.
    \end{equation*}
    Thus,
    \begin{equation*}
        Q_{b}(N)\leqslant \# \bigg\{\{k_{m}\}_{m\geqslant 0}: \sum_{m\geqslant 0}(m+1)|k_{m}|\leqslant N\bigg\}.
    \end{equation*}
    We use exponential sums to bound this lattice count. For any fixed $t>0$, if $\sum_{m\geqslant 0}(m+1)|k_{m}|\leqslant N$, then 
    \begin{equation*}
        1\leqslant \exp \bigg(tN-t\sum_{m\geqslant 0}(m+1)|k_{m}|\bigg).
    \end{equation*}
    Consequently,
    \begin{equation*}
        \begin{split}
            Q_{b}(N)&\leqslant e^{tN}\sum_{\{k_{m}\}_{m}} \exp\bigg(-t\sum_{m\geqslant 0} (m+1)|k_{m}|\bigg)\\
            &\leqslant e^{tN} \prod_{m\geqslant 0}\sum_{k\in\mathbb{Z}}e^{-t(m+1)|k|}.
        \end{split}
    \end{equation*}
    Since 
    \begin{equation*}
        \sum_{k\in\mathbb{Z}}e^{-t(m+1)|k|}=1+2\sum_{k\geqslant 1}e^{-t(m+1)k}=\frac{1+e^{-t(m+1)}}{1-e^{-t(m+1)}},
    \end{equation*}
    it follows that
    \begin{equation}\label{badicQ}
        Q_{b}(N)\leqslant e^{tN}\prod_{m\geqslant 1} \frac{1+e^{-tm}}{1-e^{-tm}}.
    \end{equation}
    Taking the logarithm, we obtain
    \begin{equation*}
        \log \prod_{m\geqslant 1} \frac{1+e^{-tm}}{1-e^{-tm}}=\sum_{m\geqslant 1}\log (1+e^{-tm})-\sum_{m\geqslant 1}\log(1-e^{-tm})\eqqcolon (\mathrm{I})+(\mathrm{II}).
    \end{equation*}
    We have
    \begin{equation*}
        (\mathrm{I})\leqslant \sum_{m\geqslant 1}e^{-tm}\leqslant \frac{1}{e^{t}-1}\leqslant \frac{1}{t}.
    \end{equation*}
    On the other hand,
    \begin{equation*}
        (\mathrm{II})\leqslant \sum_{m\geqslant 1} \sum_{s\geqslant 1} \frac{e^{-tms}}{s} \leqslant \sum_{s\geqslant 1}\frac{1}{s} \frac{1}{e^{ts}-1}\leqslant \frac{1}{t}\sum_{s\geqslant 1}\frac{1}{s^{2}}=\frac{\pi^{2}}{6t}.
    \end{equation*}
    Thus, 
    \begin{equation*}
        Q_{b}(N)\leqslant e^{tN+4/t}.
    \end{equation*}
    Taking $t=2N^{-1/2}$ proves \eqref{eq:b-ball}. Together with symmetry, subadditivity, and positivity away from zero, this verifies \hyperref[def:admissible]{Definition~\ref*{def:admissible}} with $\rho=\frac{1}{2}$.

    For \eqref{eq:log-square}, it is trivial when $n=0$. We first show the lower bound of $\ell_{b}(n)$. For any $n\neq 0$, there exists $\{k_{m}\}_{m\geqslant 0}$ such that 
    \begin{equation*}
        n=\sum_{m\geqslant 0}k_{m}b^{m},\quad \ell_{b}(n)=\sum_{m\geqslant 0} (m+1)|k_{m}|.
    \end{equation*}
    Since $b\geqslant 2$, we have $b^{r-1}+b^{s-1}\leqslant b^{r+s-1}$ for any positive integers $r,s$. Thus
    \begin{equation*}
        |n|\leqslant \sum_{m\geqslant 0}|k_{m}| b^{m}\leqslant b^{\sum_{m\geqslant 0}(m+1)|k_{m}|-1}=b^{\ell_{b}(n)-1},
    \end{equation*}
    which implies that for $n\neq 0$,
    \begin{equation}\label{elllowerbound}
        \ell_{b}(n)\geqslant 1+\frac{\log |n|}{\log b}\geqslant  \frac{\log (|n|+1)}{\log b}=c_{b}\big(\log (|n|+1)\big).
    \end{equation}

    Finally, we show the upper bound of $\ell_{b}(n)$. We assume $n\neq 0$ and write $|n|$ in base $b^{m}$,
    \begin{equation*}
        |n|=\sum_{m=0}^{M} k_{m}b^{m},\quad k_{m}\in \{0,1,\dots,b-1\},\quad k_{M}\geqslant 1.
    \end{equation*}
    Thus
    \begin{equation*}
        \ell_{b}(n)\leqslant \sum_{m=0}^{M}(m+1)k_{m}\leqslant b (M+1)^{2}.
    \end{equation*}
    By $b^{M}\leqslant |n|$, one has
    \begin{equation}\label{ellupperbound}
        \ell_{b}(n)\leqslant b\bigg(\frac{\log |n|}{\log b}+1\bigg)^{2}\leqslant 10 b (\log (|n|+1))^{2}= C_{b}\big(\log (|n|+1)\big)^{2}.
    \end{equation}
    Combining \eqref{elllowerbound} with \eqref{ellupperbound} shows \eqref{eq:log-square}.
\end{proof}

As an immediate consequence of \eqref{eq:b-ball} and
\hyperref[prop:full-measure]{Proposition~\ref*{prop:full-measure}}, the arithmetic class
\begin{equation*}
    \begin{split}
        \mathrm{DC}_{b}(\gamma,\tau)
        &\coloneqq
        \Big\{
        \alpha\in\mathbb{T}:
        \|n\alpha\|_{\mathbb{T}}
        \geqslant
        \gamma e^{-\tau\sqrt{\ell_{b}(n)}}
        \text{ for every }n\in\mathbb{Z}\setminus\{0\}
        \Big\},\\
        \mathrm{DC}_{b}(\tau)
        &\coloneqq
        \bigcup_{\gamma>0}\mathrm{DC}_{b}(\gamma,\tau)
    \end{split}
\end{equation*}
has full Lebesgue measure for every $\tau>4$. This is precisely the arithmetic class used for the Weierstrass model in the Introduction.

\subsection{Classical regularity and the Weierstrass example}

The Fourier algebra always contains ordinary analytic functions, after a possible loss of width. Gevrey functions are included whenever the chosen length grows more slowly than the corresponding Gevrey Fourier scale. The $b$-adic length then admits the Weierstrass function because its Fourier support is concentrated on  $\{\pm b^m: m\in\mathbb{N}\}$.

\begin{proposition}[Classical regularity and the Weierstrass example]
\label{prop:W}
    Let $\ell$ be an admissible Fourier length and let $\sigma>0$.
    \begin{enumerate}
        \item Suppose that $f$ is a periodic analytic function satisfying
        \begin{equation*}
            |\widehat{f}(n)|\leqslant C_{f} e^{-h|n|}, \qquad n\in\mathbb{Z},
        \end{equation*}
        for some $C_{f},h>0$. If $\sigma\ell(1)<h$, then $f\in\mathcal{A}_{\sigma}^{\ell}$.

        \item For $s\geqslant 1$, suppose that $f$ is a periodic Gevrey function of order $s$ satisfying
        \begin{equation*}
            |\widehat{f}(n)|\leqslant C_{f} e^{-h|n|^{1/s}}, \qquad n\in\mathbb{Z}.
        \end{equation*}
        If $\ell(n)\leqslant C_{\ell} |n|^{1/s}$ and $\sigma C_{\ell}<h$, then $f\in\mathcal{A}_{\sigma}^{\ell}$. In particular, if
        \begin{equation*}
            \ell(n)=o\big(|n|^{1/s}\big) \qquad \text{as } |n|\to\infty,
        \end{equation*}
        then every periodic Gevrey function of order $s$ belongs to $\mathcal{A}_{\sigma}^{\ell}$ for all $\sigma>0$. In particular, since $\ell_{b}(n)=O((\log(1+|n|))^{2})$, every periodic Gevrey function of finite order belongs to $\mathcal{A}_{\sigma}^{(b)}=\mathcal{A}_{\sigma}^{\ell_b}$ for all $\sigma>0$.

        \item Let $0<a<1$. If $ae^{\sigma}<1$, then
        \begin{equation*}
            W_{a,b}(x)=\sum_{m\geqslant 0}a^{m}\cos(2\pi b^{m} x)
        \end{equation*}
        belongs to $\mathcal{A}_{\sigma}^{(b)}$, and
        \begin{equation}\label{eq:Wnorm}
            \|W_{a,b}\|_{\sigma} \leqslant \frac{e^{\sigma}}{1-ae^{\sigma}}.
        \end{equation}
    \end{enumerate}
\end{proposition}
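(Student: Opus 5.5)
The three assertions are direct estimates of the weighted $\ell^{1}$ norm $\|f\|_{\sigma}=\sum_{n}|\widehat f(n)|e^{\sigma\ell(n)}$. In each case I bound $\ell(n)$ from above by a quantity that the assumed Fourier decay dominates, and then sum a geometric-type series. The single structural input is subadditivity. Iterating $\ell(n_1+n_2)\leqslant\ell(n_1)+\ell(n_2)$ along $n=1+\dots+1$, and using symmetry for negative $n$, gives
\begin{equation*}
\ell(n)\leqslant|n|\,\ell(1),\qquad n\in\mathbb{Z}.
\end{equation*}
This is the same inequality already used in the proof of \hyperref[lem:half-lattice-extension]{Lemma~\ref*{lem:half-lattice-extension}}.

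For item (1), I insert this bound and obtain
\begin{equation*}
\|f\|_{\sigma}\leqslant C_f\sum_{n}e^{-(h-\sigma\ell(1))|n|}.
\end{equation*}
This is a convergent two-sided geometric series because $\sigma\ell(1)<h$.

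For item (2), the hypothesis $\ell(n)\leqslant C_\ell|n|^{1/s}$ gives $|\widehat f(n)|e^{\sigma\ell(n)}\leqslant C_f e^{-(h-\sigma C_\ell)|n|^{1/s}}$. This is summable over $\mathbb{Z}$ because $e^{-c|n|^{1/s}}$ decays faster than any power of $|n|$.

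For the little-$o$ statement, fix $\sigma>0$ and a Gevrey function with some decay rate $h>0$. Choose $\varepsilon>0$ with $\sigma\varepsilon<h/2$. Then $\ell(n)\leqslant\varepsilon|n|^{1/s}$ for $|n|\geqslant n_0$, while the finitely many terms with $|n|<n_0$ are harmless. The same summation therefore applies, and $f\in\mathcal{A}_{\sigma}^{\ell}$ for every $\sigma>0$.

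The $b$-adic case follows from the upper bound $\ell_b(n)\leqslant C_b(\log(1+|n|))^2$ in \hyperref[prop:b-adic]{Proposition~\ref*{prop:b-adic}}. Since $(\log(1+|n|))^2=o(|n|^{1/s})$ for every finite $s\geqslant 1$, the little-$o$ statement applies.

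For item (3), the Fourier coefficients of $W_{a,b}$ are $\widehat W(\pm b^m)=a^m/2$, so the two terms at $\pm b^{m}$ together contribute $a^m e^{\sigma\ell_b(b^m)}$. The trivial representation $b^m=1\cdot b^m$ gives $\ell_b(b^m)\leqslant m+1$, and hence
\begin{equation*}
\|W_{a,b}\|_{\sigma}\leqslant\sum_{m\geqslant0}a^m e^{\sigma(m+1)}=\frac{e^{\sigma}}{1-ae^{\sigma}},
\end{equation*}
which is \eqref{eq:Wnorm}. The only bookkeeping point is that for $b\geqslant2$ the frequencies $b^m$ are distinct and nonzero, so no coefficients merge.

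No step is a genuine obstacle. The point needing the most care is the little-$o$ clause in (2): $\sigma$ is arbitrary while $h$ is fixed by $f$, so the argument must split off a finite set of small $|n|$ rather than use a global constant $C_\ell$.
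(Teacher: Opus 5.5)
Your proposal is correct and follows essentially the same route as the paper. For (1) you use $\ell(n)\leqslant|n|\ell(1)$. For (2) you dominate the weighted series directly, splitting off finitely many small $|n|$ in the little-$o$ case and invoking $\ell_b(n)\leqslant C_b(\log(1+|n|))^2$ for the $b$-adic case. For (3) you use $\ell_b(\pm b^m)\leqslant m+1$ and sum the resulting geometric series.
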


\begin{proof}
    By the symmetry and subadditivity of $\ell$, one has
    \begin{equation*}
        \ell(n)\leqslant |n|\ell(1), \qquad n\in\mathbb{Z}.
    \end{equation*}
    Therefore,
    \begin{equation*}
        \|f\|_{\sigma} \leqslant C_{f}\sum_{n\in\mathbb{Z}} e^{-(h-\sigma\ell(1))|n|}<\infty
    \end{equation*}
    whenever $\sigma\ell(1)<h$. This proves $f\in\mathcal{A}_{\sigma}^{\ell}$.

    Under the hypotheses of the second assertion,
    \begin{equation*}
        \|f\|_{\sigma} \leqslant C_{f}\sum_{n\in\mathbb{Z}} e^{-(h-\sigma C_{\ell})|n|^{1/s}}<\infty.
    \end{equation*}
    If $\ell(n)=o\big(|n|^{1/s}\big)$ as $|n|\to\infty$, then for any fixed $\sigma>0$, one has $\sigma\ell(n)\leqslant \frac{h}{2}|n|^{1/s}$ for all sufficiently large $|n|$, yielding the same conclusion. In particular, \eqref{eq:log-square} implies that every periodic Gevrey function of finite order belongs to $\mathcal{A}_{\sigma}^{(b)}$ for all $\sigma>0$.

    Finally, the Fourier coefficients of $W_{a,b}$ satisfy
    \begin{equation*}
        \widehat{W}_{a,b}(b^{m}) = \widehat{W}_{a,b}(-b^{m}) = \frac{a^{m}}{2}, \qquad m\geqslant 0.
    \end{equation*}
    Since $\ell_{b}(\pm b^{m})\leqslant m+1$, one obtains
    \begin{equation*}
        \|W_{a,b}\|_{\sigma}\leqslant \sum_{m\geqslant 0}a^{m} e^{\sigma\ell_{b}(b^{m})}\leqslant e^{\sigma}\sum_{m\geqslant 0}(ae^{\sigma})^{m}=\frac{e^{\sigma}}{1-ae^{\sigma}},
    \end{equation*}
    which proves $W_{a,b}\in\mathcal{A}_{\sigma}^{(b)}$ and \eqref{eq:Wnorm}.
\end{proof}

\subsection{General sparse Fourier supports}

The preceding construction is not tied to the geometric progression
$\{\pm b^m:m\in\mathbb{N}\}$. We now allow arbitrary Fourier positions and encode the decay of the coefficients by assigning a weight to the ordered generators.

Let $q = (q_{m})_{m \geqslant 0}$ be an arbitrary fixed sequence consisting of distinct nonzero integers with $q_{0} = 1$. Fix $p>0$ and define
\begin{equation}\label{eq:sparse-length}
    \ell_{q,p}(n)=\inf\bigg\{\sum_{m\geqslant 0}(m+1)^{p}|k_{m}|: n=\sum_{m\geqslant 0}k_{m}q_{m},\quad k_{m}\in\mathbb{Z} \bigg\}.
\end{equation}
The admissible set is nonempty because $q_0=1$. Moreover, the infimum is attained: below any fixed cost bound only finitely many indices can occur, and each corresponding integer coefficient has finitely many possible values. When a $2\mathbb{T}$ space is used, $\ell_{q,p}$ is understood through the canonical extension in \hyperref[lem:half-lattice-extension]{Lemma~\ref*{lem:half-lattice-extension}}.

\begin{proposition}[A sparse stretched-exponential class]
\label{prop:sparse-class}
    The function $\ell_{q,p}$ is admissible with exponent $\rho=\frac{1}{p+1}$.
    There is a constant $c_{p}>0$ such that
    \begin{equation}\label{eq:sparse-ball}
        Q_{q,p}(N)\coloneqq \#\{n\in\mathbb{Z}: \ell_{q,p}(n)\leqslant N\} \leqslant \exp\big(c_{p}N^{\frac{1}{p+1}}\big)
    \end{equation}
    uniformly in $q$.
    Consequently, if $|c_{m}|\leqslant C \exp(-a m^{p})$, then
    \begin{equation*}
        V(x)\coloneqq \sum_{m\geqslant 0}c_{m}e^{2\pi i q_{m} x}\in \mathcal{A}_{\sigma}^{\ell_{q,p}}\quad \text{for every }0<\sigma<a.
    \end{equation*}
     Moreover,   $\mathrm{DC}_{\ell_{q,p}}(\tau)$ has full Lebesgue measure for every $\tau>c_{p}$.
\end{proposition}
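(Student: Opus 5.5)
The proof will follow the template of Proposition~\ref{prop:b-adic}, with the weight $(m+1)$ replaced by $(m+1)^{p}$ and the generators $b^{m}$ replaced by $q_{m}$.

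\emph{Structural properties.} These are handled exactly as for $\ell_b$. The infimum is finite and attained, as noted before the statement. Symmetry follows from $k_m\mapsto -k_m$. Subadditivity follows by adding representations of $n_1$ and $n_2$ and using $|k_m+r_m|\leqslant|k_m|+|r_m|$. Positivity for $n\neq0$ holds because every representation of $n\neq 0$ has some $k_m\neq0$, so the cost is at least $1$. Note that $\ell_{q,p}(0)=0$ via the zero sequence.

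\emph{Ball count.} This is the main computation, and it must be uniform in $q$. The map from finitely supported sequences $(k_m)$ with cost $\sum(m+1)^p|k_m|\leqslant N$ onto $\{n:\ell_{q,p}(n)\leqslant N\}$ is surjective, because the infimum is attained. Hence $Q_{q,p}(N)$ is bounded by the number of such sequences, a quantity independent of $q$. For any $t>0$ the exponential-sum trick gives
\begin{equation*}
Q_{q,p}(N)\leqslant e^{tN}\prod_{m\geqslant1}\frac{1+e^{-tm^{p}}}{1-e^{-tm^{p}}}.
\end{equation*}
Taking logarithms, the term $\sum\log(1+e^{-tm^p})$ is at most $\sum_m e^{-tm^p}\leqslant\int_0^\infty e^{-tx^p}\,dx=\Gamma(1+1/p)\,t^{-1/p}$. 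The term $-\sum\log(1-e^{-tm^p})=\sum_{s\geqslant1}\frac1s\sum_m e^{-tsm^p}$ is at most $\sum_s s^{-1}\Gamma(1+1/p)(ts)^{-1/p}=\Gamma(1+1/p)\zeta(1+1/p)\,t^{-1/p}$. Thus $\log Q\leqslant tN+K_p t^{-1/p}$, with $K_p$ depending only on $p$. Optimizing with $t=N^{-p/(p+1)}$ gives $\log Q\leqslant(1+K_p)N^{1/(p+1)}$. This yields \eqref{eq:sparse-ball} with $c_p=1+K_p$ and $C_\ell=1$, and so admissibility with $\rho=1/(p+1)<1$.

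\emph{Membership of $V$.} Since $q_m=1\cdot q_m$ is a valid representation, $\ell_{q,p}(q_m)\leqslant(m+1)^p$. Hence
\begin{equation*}
\|V\|_\sigma\leqslant C\sum_m e^{-am^p+\sigma(m+1)^p}.
\end{equation*}
For $\sigma<a$ the exponent satisfies $-am^p+\sigma(m+1)^p=-(a-\sigma)m^p+O(\sigma m^{p-1}+\sigma)$. Indeed $(m+1)^p-m^p$ is bounded by $p(m+1)^{p-1}$, which is $O(1)$ when $p\leqslant1$ and $O(m^{p-1})=o(m^p)$ when $p>1$. So the exponent is at most $-\frac{a-\sigma}{2}m^p$ for large $m$, and the series converges. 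The distinctness of the $q_m$ guarantees that the coefficients $c_m$ are exactly the Fourier coefficients at the modes $q_m$, so the sum above bounds the norm.

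\emph{Full measure.} The final claim is Proposition~\ref{prop:full-measure} applied with $c_\ell=c_p$: for $\tau>c_p$, $\mathrm{DC}_{\ell_{q,p}}(\tau)$ has full Lebesgue measure.

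The only real obstacle is the uniform evaluation of the partition-type product for general $p$. The integral comparison above handles it, since $e^{-tx^p}$ is decreasing.
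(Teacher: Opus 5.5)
Your proof is correct and is essentially the paper's argument: the same exponential-sum bound on the product, the same integral comparison using monotonicity in $m$, the choice $t=N^{-p/(p+1)}$, the bound $\ell_{q,p}(q_m)\leqslant(m+1)^p$, and Proposition~\ref{prop:full-measure}. The only difference is cosmetic: you bound the two logarithmic sums separately to get the explicit constant $\Gamma(1+1/p)\bigl(1+\zeta(1+1/p)\bigr)$, whereas the paper bounds $\Phi(u)=\log\frac{1+e^{-u}}{1-e^{-u}}$ directly by $\log(4/u)$ and $3e^{-u}$ to show $\int_0^\infty\Phi(x^p)\,\mathrm{d}x<\infty$. (One harmless slip: for $p<1$ the bound $(m+1)^p-m^p\leqslant p(m+1)^{p-1}$ is false, since the mean value theorem gives $p\,m^{p-1}$ there; your conclusion that the difference is $O(1)$ still holds because $(m+1)^p-m^p\leqslant 1$ for $p\leqslant 1$.)
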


\begin{proof}
    The definition \eqref{eq:sparse-length} immediately gives symmetry, subadditivity, and positivity away from zero. The ball estimate proved below shows that every ball is finite.
    Assume $N\geqslant 1$. As in \eqref{badicQ}, counting the representations gives, for any $t>0$,
    \begin{equation*}
        Q_{q,p}(N) \leqslant e^{tN} \prod_{m\geqslant 1}\frac{1+e^{-tm^{p}}}{1-e^{-tm^{p}}}.
    \end{equation*}
    Define
    \begin{equation*}
        \Phi(u)=\log \bigg(\frac{1+e^{-u}}{1-e^{-u}}\bigg),\qquad u>0.
    \end{equation*}
    The direct calculation shows that for $0<u\leqslant 1$,
    \begin{equation*}
        \Phi(u)\leqslant \log \bigg(\frac{2}{u/2}\bigg)\leqslant \log (4/u),
    \end{equation*}
    and for $u>1$,
    \begin{equation*}
        \Phi(u)\leqslant \log (1+e^{-u})-\log (1-e^{-u})\leqslant e^{-u}+\frac{e^{-u}}{1-e^{-u}}\leqslant 3e^{-u}.
    \end{equation*}
    It follows that
    \begin{equation*}
        I_{p}\coloneqq\int_{0}^{\infty}\Phi(x^{p})\,\mathrm{d}x<\infty.
    \end{equation*}
    Since $x\mapsto \Phi(tx^{p})$ is monotonically decreasing, one has
    \begin{equation*}
        \sum_{m\geqslant 1} \Phi(tm^{p})\leqslant \int_{0}^{\infty} \Phi(tx^{p})\,\mathrm{d}x=I_{p} t^{-\frac{1}{p}}.
    \end{equation*}
    Choose $t=N^{-\frac{p}{p+1}}$. Then the required ball growth estimate follows from
    \begin{equation*}
        \log Q_{q,p}(N)\leqslant tN+I_{p} t^{-\frac{1}{p}} \leqslant (1+I_{p})N^{\frac{1}{p+1}}=c_{p}N^{\frac{1}{p+1}}.
    \end{equation*}

    Since $\ell_{q,p}(q_{m})\leqslant (m+1)^{p}$ and $0<\sigma<a$, we have
    \begin{equation*}
        \|V\|_{\sigma}=\sum_{m\geqslant 0}|c_{m}|e^{\sigma \ell_{q,p}(q_{m})}\leqslant C\sum_{m\geqslant 0}\exp \big(-am^{p}+\sigma(m+1)^{p}\big)<\infty,
    \end{equation*}
    which implies $V\in\mathcal{A}_{\sigma}^{\ell_{q,p}}$.
    
    Since the ball exponent is $\rho=\frac{1}{p+1}$, the full measure property follows from \hyperref[prop:full-measure]{Proposition~\ref*{prop:full-measure}}.
\end{proof}

\begin{corollary}[Continuous sampling functions with no positive H\"older regularity]
\label{cor:continuous-nonholder}
    Fix $p>0$ and $a>0$. Let $q=(q_{m})_{m\geqslant 0}$ be given by
    \begin{equation*}
        q_{0}=1,\qquad q_{m}=2^{m+\lceil m^{2p}\rceil},\quad m\geqslant 1,
    \end{equation*}
    and define
    \begin{equation*}
        V(x)=\sum_{m\geqslant 0}e^{-am^{p}}\cos(2\pi q_{m}x).
    \end{equation*}
    Then
    \begin{equation*}
        V\in C^{0}(\mathbb{T},\mathbb{R})\setminus\bigcup_{r>0}C^{r}(\mathbb{T},\mathbb{R}).
    \end{equation*}
    Moreover, for the adapted Fourier length $\ell_{q,p}$ defined by \eqref{eq:sparse-length},
    \begin{equation*}
        V\in\mathcal{A}_{\sigma}^{\ell_{q,p}}(\mathbb{T},\mathbb{R})
    \end{equation*}
    for every $0<\sigma<a$. Consequently, for every $\tau>c_{p}$ and every $\alpha\in\mathrm{DC}_{\ell_{q,p}}(\tau)$, there exists $\lambda_{0}>0$ such that the conclusions of  \hyperref[thm:intro-schrodinger]{Theorem~\ref*{thm:intro-schrodinger}} hold for $\lambda V$ whenever $|\lambda|<\lambda_{0}$.
\end{corollary}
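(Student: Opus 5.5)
The proof splits into three parts: continuity together with membership in the adapted algebra, failure of every positive H\"older condition, and the spectral consequence.

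\emph{Continuity and adapted regularity.} Continuity is immediate. The series $\sum_m e^{-am^p}$ converges, so the Weierstrass M-test gives uniform convergence of $V$. The frequencies $q_m$ are distinct: the exponents $m+\lceil m^{2p}\rceil$ are strictly increasing in $m\geqslant1$, and all exceed $0$, the exponent of $q_0=1$. Hence \hyperref[prop:sparse-class]{Proposition~\ref*{prop:sparse-class}} applies to $\ell_{q,p}$. Writing the cosine as $\frac12(e^{2\pi i q_m x}+e^{-2\pi i q_m x})$, symmetry gives $\ell_{q,p}(-q_m)=\ell_{q,p}(q_m)\leqslant (m+1)^p$. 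Then
\begin{equation*}
\|V\|_{\sigma}\leqslant\sum_{m\geqslant0}e^{-am^p+\sigma(m+1)^p}<\infty\qquad\text{for }0<\sigma<a.
\end{equation*}

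\emph{Failure of H\"older regularity.} This is the main point. Suppose $V\in C^r$ for some $r\in(0,1)$. The standard fact is that H\"older continuity forces decay of the $L^2$ modulus of the high-frequency tail; I plan to use the Fourier coefficient estimate $|\widehat V(n)|\leqslant C|n|^{-r}$. It follows from writing
\begin{equation*}
\widehat V(n)=\tfrac12\int_{\mathbb{T}}\bigl(V(x)-V(x+\tfrac{1}{2n})\bigr)e^{-2\pi i nx}\,\mathrm{d}x,
\end{equation*}
which gives $|\widehat V(n)|\leqslant \frac12[V]_{r}(2|n|)^{-r}$. Since the $q_m$ are distinct, there is no cancellation between modes, so $\widehat V(q_m)=\frac12 e^{-am^p}$. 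The H\"older bound would then require $e^{-am^p}\lesssim q_m^{-r}=2^{-r(m+\lceil m^{2p}\rceil)}$. The right-hand side decays like $\exp(-r\log 2\, m^{2p})$, and $m^{2p}\gg m^p$, so the inequality fails for large $m$. This contradiction shows $V\notin C^r$ for every $r>0$; for $r\geqslant1$ we use the inclusion $C^r\subseteq C^{1/2}$. The step to get right is exactly this choice of the exponent $2p$, so that the Euclidean decay rate $q_m^{-r}$ beats $e^{-am^p}$ for every $r$.

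\emph{Spectral consequence.} Fix $\tau>c_p$ and $\alpha\in\mathrm{DC}_{\ell_{q,p}}(\tau)$; this class has full measure by \hyperref[prop:sparse-class]{Proposition~\ref*{prop:sparse-class}}. Fix any $\sigma_0\in(0,a)$ and let $\varepsilon_*$ be the threshold of \hyperref[thm:intro-schrodinger]{Theorem~\ref*{thm:intro-schrodinger}} for $\ell_{q,p}$ with $\rho=\frac1{p+1}$ and $c_\ell=c_p$. Set $\lambda_0=\varepsilon_*/\|V\|_{\sigma_0}$. Then $\|\lambda V\|_{\sigma_0}<\varepsilon_*$ for $|\lambda|<\lambda_0$, and the conclusions of \hyperref[thm:intro-schrodinger]{Theorem~\ref*{thm:intro-schrodinger}} follow directly. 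The only thing to check is that the arithmetic hypotheses match, namely $\tau>c_\ell$ with $c_\ell=c_p$, which holds by construction.
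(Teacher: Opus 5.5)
Your proposal is correct and follows the paper's proof essentially step for step: the M-test gives continuity, the bound $\ell_{q,p}(\pm q_m)\leqslant(m+1)^p$ gives adapted membership, and the non-H\"older claim comes from the contradiction between $|\widehat V(q_m)|=\frac12 e^{-am^p}$ and the H\"older decay $|\widehat V(n)|\lesssim|n|^{-r}$. The spectral part is then a direct application of the Schr\"odinger theorem with $c_\ell=c_p$. The only differences are cosmetic: the paper handles large $r$ by passing to $0<r'<\min\{r,1\}$ where you use $C^r\subseteq C^{1/2}$, and you derive the coefficient decay explicitly via the half-period shift where the paper takes it as standard.
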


\begin{proof}
    Since $\sum_{m\geqslant 0}\exp(-am^{p})<\infty$,
    the defining series converges uniformly, and hence $V\in C^{0}(\mathbb{T},\mathbb{R})$. Moreover,
    \begin{equation*}
        \widehat{V}(q_{m})=\widehat{V}(-q_{m})=\frac{1}{2}e^{-am^{p}},\qquad m\geqslant 0.
    \end{equation*}
    Since $\ell_{q,p}(\pm q_{m})\leqslant(m+1)^{p}$, for every $0<\sigma<a$,
    \begin{equation*}
        \|V\|_{\sigma}\leqslant\sum_{m\geqslant 0}\exp\big(-am^{p}+\sigma(m+1)^{p}\big)<\infty.
    \end{equation*}
    Thus $V\in\mathcal{A}_{\sigma}^{\ell_{q,p}}(\mathbb{T},\mathbb{R})$.

    Suppose that $V\in C^{r}(\mathbb{T},\mathbb{R})$ for some $r>0$. Fix $0<r'<\min\{r,1\}$. Then $V\in C^{r'}(\mathbb{T},\mathbb{R})$, and hence
    \begin{equation}\label{Cbounded}
        \sup_{n\in\mathbb{Z}}(1+|n|)^{r'}|\widehat{V}(n)|<\infty.
    \end{equation}
    On the other hand, since $\frac{m+\lceil m^{2p}\rceil}{m^{p}}\to\infty$, one has
    \begin{equation*}
        q_{m}^{r'}|\widehat{V}(q_{m})|=\frac{1}{2}\exp\big(r'\log 2\big(m+\lceil m^{2p}\rceil\big)-am^{p}\big)\to\infty,
    \end{equation*}
    which contradicts \eqref{Cbounded}. Therefore, $V\notin C^{r}(\mathbb{T},\mathbb{R})$ for every $r>0$.

    Finally, \hyperref[prop:sparse-class]{Proposition~\ref*{prop:sparse-class}} implies that $\mathrm{DC}_{\ell_{q,p}}(\tau)$ has full Lebesgue measure for every $\tau>c_{p}$. The last assertion follows by applying  \hyperref[thm:intro-schrodinger]{Theorem~\ref*{thm:intro-schrodinger}} to $\lambda V$ for sufficiently small $|\lambda|$.
\end{proof}

\section{One KAM step}

In this section, we establish the quantitative one-step KAM conjugation. Throughout, let $\ell$ be an admissible Fourier length of exponent $0<\rho<1$, and write
\begin{equation*}
    \mathcal A_\sigma=\mathcal A_\sigma^\ell,
    \qquad
    \mathrm{DC}(\gamma,\tau)=\mathrm{DC}_\ell(\gamma,\tau).
\end{equation*}
We use $\mathcal A_\sigma(\mathbb T,X)$ for the corresponding space of $X$-valued periodic functions. We first provide the two cocycle preliminaries needed in the proof: the resonant decomposition for the linearized conjugacy operator and the basic covariance and continuity properties of the fibered rotation number.

\subsection{Resonant and non-resonant decomposition}

Let $A\in \mathrm{SL}(2,\mathbb{R})$ and $Y\in \mathcal{A}_{\sigma}(\mathbb{T},\mathrm{sl}(2,\mathbb{R}))$ with $\sigma>0$. We define the linear operator $L_{A}$ by
\begin{equation*}
    [L_{A}Y](x)\coloneqq A^{-1}Y(x+\alpha)A-Y(x).
\end{equation*}
We decompose $\mathcal{A}_{\sigma}(\mathbb{T},\mathrm{sl}(2,\mathbb{R}))=\mathcal{A}_{\sigma}^{\mathrm{nre}}(\eta) \oplus \mathcal{A}_{\sigma}^{\mathrm{re}}(\eta)$, where $\mathcal{A}_{\sigma}^{\mathrm{nre}}(\eta)$ is the closed invariant subspace of $\mathcal{A}_{\sigma}(\mathbb{T},\mathrm{sl}(2,\mathbb{R}))$ such that $L_{A}$ restricted to $\mathcal{A}_{\sigma}^{\mathrm{nre}}(\eta)$ is invertible and
\begin{equation*}
    \|L_{A}^{-1}\|\leqslant \frac{1}{\eta} \quad \text{on}\quad \mathcal{A}_{\sigma}^{\mathrm{nre}}(\eta).
\end{equation*}

\begin{lemma}\label{elimination}
    Let $\alpha\in\mathbb{T}$, $A\in\mathrm{SL}(2,\mathbb{R})$ and $F\in \mathcal{A}_{\sigma}(\mathbb{T},\mathrm{sl}(2,\mathbb{R}))$ satisfy
    \begin{equation*}
        \|F\|_{\sigma}\leqslant \varepsilon< (4\|A\|)^{-4}.
    \end{equation*}
    Let $\eta>13\|A\|^{2}\varepsilon^{\frac{1}{2}}$. Then there exist $Y\in \mathcal{A}_{\sigma}^{\mathrm{nre}}(\eta)$ and $F^{\mathrm{re}}\in\mathcal{A}_{\sigma}^{\mathrm{re}}(\mathbb{T},\mathrm{sl}(2,\mathbb{R}))$ such that
    \begin{equation*}
        e^{-Y(x+\alpha)} Ae^{F(x)} e^{Y(x)}=Ae^{F^{\mathrm{re}}(x)},
    \end{equation*}
    with the estimates
    \begin{equation*}
        \|F^{\mathrm{re}}\|_{\sigma}\leqslant 2\varepsilon,\quad \|Y\|_{\sigma}\leqslant \varepsilon^{\frac{1}{2}}.
    \end{equation*}
\end{lemma}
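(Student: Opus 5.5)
This lemma is the standard nonlinear elimination step of Hou--You type: the non-resonant part of the perturbation is removed by a Newton-type iteration, or equivalently by the implicit function theorem on $\mathcal{A}_{\sigma}^{\mathrm{nre}}(\eta)$. Let $\mathbb{P}_{\mathrm{nre}}$ and $\mathbb{P}_{\mathrm{re}}$ denote the projections associated with the splitting $\mathcal{A}_{\sigma}^{\mathrm{nre}}(\eta)\oplus\mathcal{A}_{\sigma}^{\mathrm{re}}(\eta)$. We assume, as in the usual construction where the splitting is made mode by mode in Fourier space, that these projections have norm at most one.

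We look for $Y\in\mathcal{A}_{\sigma}^{\mathrm{nre}}(\eta)$ solving
\[
\mathbb{P}_{\mathrm{nre}}\Big(\log\big(A^{-1}e^{-Y(x+\alpha)}Ae^{F(x)}e^{Y(x)}\big)\Big)=0 .
\]
Writing $A^{-1}e^{-Y(\cdot+\alpha)}A=e^{-A^{-1}Y(\cdot+\alpha)A}$, the cocycle $e^{-A^{-1}Y(\cdot+\alpha)A}e^{F}e^{Y}$ is close to the identity. By the Banach algebra property and the exponential estimates of Proposition~\ref{banachalgebra}, its logarithm equals
\[
F-L_AY+\mathcal{N}(Y,F),
\]
where $\mathcal{N}$ collects the quadratic terms. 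Those terms satisfy
\[
\|\mathcal{N}\|_{\sigma}\lesssim \|A\|^{4}\big(\|Y\|_{\sigma}^{2}+\|Y\|_{\sigma}\|F\|_{\sigma}+\|F\|_{\sigma}^{2}\big).
\]
The factor $\|A\|^{2}\|Y\|_{\sigma}$ bounds $\|A^{-1}YA\|_{\sigma}$, using $\|A^{-1}\|=\|A\|$. The problem therefore becomes the fixed point equation
\[
Y=\Phi(Y)\coloneqq L_A^{-1}\mathbb{P}_{\mathrm{nre}}\big(F+\mathcal{N}(Y,F)\big)
\]
on the ball $\{\|Y\|_{\sigma}\leqslant\varepsilon^{1/2}\}$ in $\mathcal{A}_{\sigma}^{\mathrm{nre}}(\eta)$.

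First, $\Phi$ maps the ball into itself. The bound $\|L_A^{-1}\|\leqslant\eta^{-1}$ gives $\|\Phi(Y)\|_{\sigma}\leqslant\eta^{-1}\big(\varepsilon+C\|A\|^{4}\varepsilon\big)$. With $\eta>13\|A\|^{2}\varepsilon^{1/2}$ this is at most $(1+C\|A\|^4)\varepsilon^{1/2}/(13\|A\|^{2})$. The constant $13$ and the condition $\varepsilon<(4\|A\|)^{-4}$ should be tuned so that this is $\leqslant\varepsilon^{1/2}$. In the bookkeeping one of the factors $\|A\|^{2}$ is paid by $\eta$, and the other is absorbed by the size condition $\|A\|^2\varepsilon^{1/2}<1/16$.

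Second, $\Phi$ is a contraction, by the same kind of estimate on the Lipschitz constant of $\mathcal{N}$ in $Y$. Its Lipschitz constant is $\lesssim\|A\|^{4}\varepsilon^{1/2}$, and after dividing by $\eta$ this is $<1/2$ under the hypotheses.

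Having obtained the fixed point $Y$, we define
\[
F^{\mathrm{re}}\coloneqq\log\big(A^{-1}e^{-Y(\cdot+\alpha)}Ae^{F}e^{Y}\big)=\mathbb{P}_{\mathrm{re}}\big(F+\mathcal{N}(Y,F)\big).
\]
It lies in $\mathcal{A}_{\sigma}^{\mathrm{re}}$ and satisfies
\[
\|F^{\mathrm{re}}\|_{\sigma}\leqslant\varepsilon+C\|A\|^{4}\big(\varepsilon+\varepsilon^{1/2}\varepsilon+\varepsilon^{2}\big)\leqslant 2\varepsilon .
\]
This needs care: the quadratic term $\|A\|^4\|Y\|_{\sigma}^2\sim\|A\|^4\varepsilon$ is not obviously $\leqslant\varepsilon$. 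The fix I would try first is to sharpen the a priori bound on the fixed point to $\|Y\|_{\sigma}\leqslant 2\varepsilon/\eta$. This holds because $\Phi(Y)$ is at most $\eta^{-1}$ times something of order $\varepsilon$. Then $\|A\|^{4}\|Y\|_{\sigma}^{2}\lesssim\|A\|^4\varepsilon^2/\eta^2\lesssim\varepsilon/169$, which is small relative to $\varepsilon$. Real-valuedness ($\mathrm{sl}(2,\mathbb{R})$) is preserved, since all the operations involved preserve it.

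The main obstacle is controlling the BCH-type remainder $\mathcal{N}$ explicitly with constants compatible with $13$ and $(4\|A\|)^{-4}$. I would avoid the full BCH formula and instead estimate directly
\[
e^{-\tilde Y}e^{F}e^{Y}-\mathrm{I}-(F+Y-\tilde Y),\qquad \tilde Y\coloneqq A^{-1}Y(\cdot+\alpha)A ,
\]
expanding each exponential with $\|e^{f}-\mathrm{I}-f\|_\sigma\leqslant2\|f\|_\sigma^2$. The logarithm is then taken using the series $\log(\mathrm{I}+M)$ for $\|M\|_{\sigma}<1/2$. This gives explicit quadratic bounds, and these in turn fix the numerical constants.
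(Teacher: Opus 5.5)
The paper gives no independent argument here. It notes that $\mathcal{A}_{\sigma}$ is a Banach algebra (Proposition~\ref{banachalgebra}) and invokes the abstract elimination lemma of \cite{MR3936094} (Lemma~3.1 and Remark~3.1 there), which is valid in any Banach algebra carrying such a splitting. Your self-contained fixed-point route is a legitimate alternative, and its skeleton is correct: you solve $Y=L_A^{-1}\mathbb{P}_{\mathrm{nre}}\big(F+\mathcal{N}(Y,F)\big)$ on the closed subspace $\mathcal{A}_{\sigma}^{\mathrm{nre}}(\eta)$, and then $F^{\mathrm{re}}=\mathbb{P}_{\mathrm{re}}(F+\mathcal{N})$ because $L_AY\in\mathcal{A}_{\sigma}^{\mathrm{nre}}(\eta)$. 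The bound $\|\mathbb{P}_{\mathrm{nre}}\|,\|\mathbb{P}_{\mathrm{re}}\|\leqslant 1$ is a hypothesis you are adding, not part of the statement, so say so explicitly.

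\textbf{The gap: your estimates do not close.} Write $\tilde Y=A^{-1}Y(\cdot+\alpha)A$ and use your bound $\|\mathcal{N}\|_{\sigma}\lesssim\|A\|^{4}(\|Y\|_{\sigma}^{2}+\cdots)$. On the ball $\|Y\|_{\sigma}\leqslant\varepsilon^{1/2}$ you only obtain
\[
\|\Phi(Y)\|_{\sigma}\lesssim\frac{\|A\|^{4}\varepsilon}{\eta}<\frac{\|A\|^{2}}{13}\,\varepsilon^{1/2},
\qquad
\mathrm{Lip}(\Phi)\lesssim\frac{\|A\|^{4}\varepsilon^{1/2}}{\eta}<\frac{\|A\|^{2}}{13}.
\]
Once $\|A\|$ is large, the first is not $\leqslant\varepsilon^{1/2}$ and the second is not $<1/2$. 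Absorbing the second factor $\|A\|^{2}$ via $\|A\|^{2}\varepsilon^{1/2}<1/16$ uses up exactly the factor $\varepsilon^{1/2}$ you need to keep and leaves an $O(1)$ quantity, so the bookkeeping you describe is wrong. Your repair $\|Y\|_{\sigma}\leqslant 2\varepsilon/\eta$ is the right target. But you derive it from the big-ball bound $\|\Phi(Y)\|_{\sigma}\lesssim\eta^{-1}\|A\|^{4}\varepsilon$, which does not give it, and the big-ball fixed point it would start from was never established. As written, that step is circular.

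\textbf{The fix.} Run the contraction on $\{Y\in\mathcal{A}_{\sigma}^{\mathrm{nre}}(\eta):\|Y\|_{\sigma}\leqslant 2\varepsilon/\eta\}$ from the start. On this ball $\|\tilde Y\|_{\sigma}\leqslant 2\|A\|^{2}\varepsilon/\eta<\frac{2}{13}\varepsilon^{1/2}$; this is precisely what the factor $13\|A\|^{2}$ in $\eta$ is for. Also $\|Y\|_{\sigma}$ and $\|F\|_{\sigma}$ are smaller still, and $\varepsilon^{1/2}<1/16$ because $\|A\|\geqslant 1$. Your term-by-term expansion then gives the following:
\begin{itemize}
\item $\|\mathcal{N}\|_{\sigma}\leqslant c\,\varepsilon$ with an absolute constant $c$ well below $1$;
\item $\mathrm{Lip}(\mathcal{N})\lesssim\|A\|^{2}\varepsilon^{1/2}$, so $\mathrm{Lip}(\Phi)$ is bounded by an absolute constant times $1/13$, hence below $1$;
\item $\Phi$ maps the ball into itself, since $\|\Phi(Y)\|_{\sigma}\leqslant\eta^{-1}(1+c)\varepsilon$.
\end{itemize}
This yields $\|Y\|_{\sigma}\leqslant 2\varepsilon/\eta<\varepsilon^{1/2}$ and $\|F^{\mathrm{re}}\|_{\sigma}\leqslant\|F\|_{\sigma}+\|\mathcal{N}\|_{\sigma}\leqslant 2\varepsilon$.

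Alternatively, you may keep the ball of radius $\varepsilon^{1/2}$, but only if you use that every BCH correction in $\log(e^{-\tilde Y}e^{F}e^{Y})$ is a commutator containing $F$ or $Y$. That gives $\|\mathcal{N}\|_{\sigma}\lesssim(\|A\|^{2}\|Y\|_{\sigma}+\|F\|_{\sigma})(\|Y\|_{\sigma}+\|F\|_{\sigma})$, and with this structure both the contraction and your a posteriori sharpening go through. The direct expansion you propose loses exactly this cancellation of the pure $\tilde Y^{2}$ terms.
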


\begin{proof}
    Since we have already shown that $\mathcal{A}_{\sigma}$ is a Banach space in \hyperref[banachalgebra]{Proposition~\ref*{banachalgebra}}, the proof follows from Lemma 3.1 and Remark 3.1 in \cite{MR3936094}.
\end{proof}

\subsection{Fibered rotation number}
Denote by $\varrho(\alpha,A)$  the fibered rotation number of the cocycle $(\alpha,A)$. Let
\begin{equation*}
    M=\frac{1}{1+i}\begin{pmatrix}1&-i\\1&i\end{pmatrix}.
\end{equation*}
Then, for every $\phi\in\mathbb R$,
\begin{equation*}
    M^{-1}
    \begin{pmatrix}
        e^{2\pi i\phi}&0\\
        0&e^{-2\pi i\phi}
    \end{pmatrix}
    M
    =
    \begin{pmatrix}
        \cos 2\pi\phi&-\sin 2\pi\phi\\
        \sin 2\pi\phi&\cos 2\pi\phi
    \end{pmatrix}
    \eqqcolon R_\phi.
\end{equation*}
The fibered rotation number is normalized by
$\varrho(\alpha,R_\phi)=\phi\pmod{\mathbb Z}$. If
$A\in C^0(\mathbb T,\mathrm{SL}(2,\mathbb R))$ is homotopic to
$x\mapsto R_{nx/2}$, then $n\in\mathbb Z$ is called the degree of $A$ and
is denoted by $\deg A$. If
\begin{equation*}
    B(x+\alpha)^{-1}A(x)B(x)=A_*(x),
\end{equation*}
where $B\in C^0(2\mathbb T,\mathrm{SL}(2,\mathbb R))$ has degree $n$, then
\begin{equation*}
    \varrho(\alpha,A_*)
    =\varrho(\alpha,A)-\frac{n\alpha}{2}
    \pmod{\mathbb Z}.
\end{equation*}

\begin{lemma}[{\cite[Lemma 9]{MR2481750}}]\label{amorholder}
    Let $\alpha\in\mathbb{R}\setminus\mathbb{Q}$ and $A\in\mathrm{SL}(2,\mathbb{R})$. There exist $C_{A}$ and $\varepsilon=\varepsilon(A)$ such that for $F\in C^{0}(\mathbb{T},\mathrm{sl}(2,\mathbb{R}))$ satisfying $\|F\|_{C^{0}}<\varepsilon$,
    \begin{equation*}
        \|\varrho(\alpha, Ae^{F})-\varrho(A)\|_{\mathbb{T}}\leqslant C_{A}\|F\|_{C^{0}}^{\frac{1}{2}}.
    \end{equation*}
\end{lemma}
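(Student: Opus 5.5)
The plan is to reduce to the fixed constant $A$ and treat the three conjugacy types (elliptic, parabolic, hyperbolic) separately, with constants allowed to depend on $A$. The basic tool is a uniform comparison estimate. Suppose $C\in C^{0}(\mathbb{T},\mathrm{SL}(2,\mathbb{R}))$ satisfies $\sup_{x}\|C(x)-R_{\phi}\|\leqslant\delta$ with $\delta$ small. The projective action of $C(x)$ on $\mathbb{RP}^{1}$ lifts to a map $\widetilde{C}(x,\cdot)$ of $\mathbb{R}$, and we can choose the lift so that $|\widetilde{C}(x,t)-t-\phi|\leqslant c\delta$ uniformly in $(x,t)$. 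The fibered rotation number is the average displacement of the iterated lifted skew product. Iterating and dividing by $n$ therefore gives $\|\varrho(\alpha,C)-\phi\|_{\mathbb{T}}\leqslant c\delta$. Conjugation by a constant $P\in\mathrm{SL}(2,\mathbb{R})$ has degree $0$, so it does not change $\varrho$, while it changes the perturbation size by at most a factor $\|P\|^{2}$.

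\emph{Elliptic case.} Here $A=PR_{\phi}P^{-1}$ for some constant $P$. Then $P^{-1}Ae^{F}P=R_{\phi}e^{P^{-1}FP}$, and for $\|F\|_{C^{0}}$ small
\begin{equation*}
    \|R_{\phi}e^{P^{-1}FP}-R_{\phi}\|_{C^{0}}\leqslant 2\|P\|^{2}\|F\|_{C^{0}}.
\end{equation*}
The comparison estimate gives a Lipschitz bound $\|\varrho(\alpha,Ae^{F})-\phi\|_{\mathbb{T}}\leqslant C_{A}\|F\|_{C^{0}}$. This is stronger than the claimed $\frac{1}{2}$-power once $\|F\|_{C^{0}}<1$.

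\emph{Parabolic case.} This is the only place the square root is genuinely needed. After a constant conjugation, $A=\pm\begin{pmatrix}1&t\\0&1\end{pmatrix}$ and $\varrho(A)\in\{0,\frac{1}{2}\}$. Set $\varepsilon=\|F\|_{C^{0}}$ and conjugate further by $D=\mathrm{diag}(\varepsilon^{1/4},\varepsilon^{-1/4})$. Then $D^{-1}AD=\pm\begin{pmatrix}1&t\varepsilon^{1/2}\\0&1\end{pmatrix}$, which is within $|t|\varepsilon^{1/2}$ of $\pm\mathrm{I}$, while $\|D^{-1}FD\|_{C^{0}}\leqslant\|D\|^{2}\varepsilon=\varepsilon^{1/2}$. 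The conjugated cocycle is therefore $C_{A}\varepsilon^{1/2}$-close to $\pm\mathrm{I}=R_{0}$ or $R_{1/2}$ in $C^{0}$. The comparison estimate then yields the claim, provided $\varepsilon$ is small enough that all closeness quantities stay below the threshold of the comparison lemma.

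\emph{Hyperbolic case.} The matrix $A$ has eigenvalues $\lambda^{\pm1}$ with $|\lambda|>1$, and $\varrho(A)\in\{0,\frac{1}{2}\}$ according to the sign of the trace. For $\|F\|_{C^{0}}<\varepsilon(A)$, the constant cone field around the unstable direction of $A$ is still invariant, so $(\alpha,Ae^{F})$ is uniformly hyperbolic. Its unstable section is continuous and homotopic to the constant unstable direction of $A$. The lifted dynamics therefore has a continuous invariant graph lying within a bounded, uniform-in-$x$ distance of that constant direction. Consequently the average displacement is exactly that of $A$, giving $\varrho(\alpha,Ae^{F})=\varrho(A)$, and the estimate holds trivially.

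\emph{Main obstacle.} The main difficulty is making the comparison lemma precise. One must fix the lift normalization consistently with $\varrho(\alpha,R_{\phi})=\phi$, including the doubling between the angle of a vector and the projective angle. One must also check that the displacement bound holds uniformly in $t$ and $x$. The remaining steps are then short bookkeeping. Finally, take $\varepsilon(A)$ to be the minimum of the thresholds from the three cases, and $C_{A}$ to be the maximum of the three constants.
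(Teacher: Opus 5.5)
Your proof is correct. The paper gives no argument of its own for this lemma: it quotes it from \cite[Lemma 9]{MR2481750} and uses it as a black box, so your write-up is a self-contained substitute for the citation.

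The case split is the natural one. For elliptic $A$ the bound is even Lipschitz. For hyperbolic $A$ the rotation number does not move. The precise form of your ``average displacement'' sentence is that the lifted unstable graph satisfies $\widetilde{C}(x,\tilde u(x))=\tilde u(x+\alpha)+\tfrac{m}{2}$ with a constant integer $m$. This $m$ is the same as for $A$ because $\tilde u$ stays in a narrow cone and the lifts are chosen close to each other. Only the parabolic case produces the exponent $\tfrac12$, through a diagonal rescaling that balances the nilpotent entry against the perturbation. This is the same mechanism the paper uses in the proof of Theorem~\ref{LEholder}, where conjugation by $\operatorname{diag}\{d_j,d_j^{-1}\}$ with $d_j\propto\delta^{1/4}$ trades $|\widehat{\zeta}_j|$ against $\delta$.

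One slip needs fixing. With $D=\operatorname{diag}(\varepsilon^{1/4},\varepsilon^{-1/4})$ one gets $D^{-1}\begin{pmatrix}1&t\\0&1\end{pmatrix}D=\begin{pmatrix}1&t\varepsilon^{-1/2}\\0&1\end{pmatrix}$. So the rescaling must be $D=\operatorname{diag}(\varepsilon^{-1/4},\varepsilon^{1/4})$, or equivalently conjugate by $D^{-1}$. The bound $\|D^{-1}FD\|_{C^{0}}\leqslant\|D\|^{2}\varepsilon=\varepsilon^{1/2}$ is unaffected. Also, $\varepsilon$ there should be $\|P^{-1}FP\|_{C^0}$ after the preliminary normal-form conjugation, which only changes $C_A$ by a factor $\|P\|$.

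The citation is short. Your argument exposes how the constants depend on $A$, and they degenerate as $A$ approaches the parabolic locus: through $\|P\|^{2}$ in the elliptic case, and through the cone threshold in the hyperbolic case. That is harmless for the lemma as stated. However, the paper later applies the estimate with a single constant to the moving matrices $A_j$ (Corollary~\ref{PE}, Lemma~\ref{coverK}), so a version uniform in $\|A\|$ is what is really used there.

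If you want that uniform version, sandwich the lifted ray actions:
\[
\widetilde{A}\circ\widetilde{R}_{-c\varepsilon}\leqslant\widetilde{A}\circ\widetilde{e^{F(x)}}\leqslant\widetilde{A}\circ\widetilde{R}_{c\varepsilon}.
\]
By monotonicity of the lifts, this reduces the problem to the constant matrices $AR_{\pm c\varepsilon}$. For a constant matrix, $\cos 2\pi\varrho$ is the half-trace clipped to $[-1,1]$, and here it moves by $O(\|A\|\varepsilon)$. The $\tfrac12$-H\"older continuity of $\arccos$, together with continuity of $\eta\mapsto\varrho(AR_\eta)$, then gives a bound $C(\|A\|\varepsilon)^{1/2}$ with no elliptic/parabolic/hyperbolic distinction.
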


\subsection{The one-step conjugation}

Suppose that $A\in\mathrm{SL}(2,\mathbb{R})$, with eigenvalues $\{\chi e^{\pm 2\pi i\varrho}\}$ for some $\varrho\in i\mathbb{R}\cup\mathbb{R}$ and $\chi\in\{\pm 1\}$. We say that $A$ is non-resonant up to $N$, denoted by $A\in\mathcal{NR}_{\ell}(N,\varepsilon^{\frac{1}{10}})$, if
\begin{equation*}
    \|2\varrho-n\alpha\|_{\mathbb{T}}\geqslant\varepsilon^{\frac{1}{10}}
\end{equation*}
for every $n\in\mathbb{Z}$ satisfying $0<\ell(n)\leqslant N$. Otherwise, we say that $A$ is resonant, denoted by $A\in\mathcal{RS}_{\ell}(N,\varepsilon^{\frac{1}{10}})$, if there exists $n_{*}\in\mathbb{Z}$ satisfying $0<\ell(n_{*})\leqslant N$ such that
\begin{equation*}
    \|2\varrho-n_{*}\alpha\|_{\mathbb{T}}<\varepsilon^{\frac{1}{10}}.
\end{equation*}

\begin{theorem}\label{onestep}
    Let $0<\rho<1$, $\alpha\in\mathrm{DC}(\gamma,\tau)$, and $0<\sigma_{+}<\sigma$. Let $A\in \mathrm{SL}(2,\mathbb{R})$. There exist constants $c=c(\gamma,\|A\|)$ and $C=C(\rho,\tau,\sigma)$ such that, if $F\in\mathcal{A}_{\sigma}(\mathbb{T},\mathrm{sl}(2,\mathbb{R}))$ satisfies 
    \begin{equation*}
        \|F\|_{\sigma}<\varepsilon< c\exp\Big(-C(\sigma-\sigma_{+})^{-\frac{2\rho}{1-\rho}}\Big),
    \end{equation*}
    there exist $A_{+}\in\mathrm{SL}(2,\mathbb{R})$, $B\in \mathcal{A}_{\sigma_{+}}(2\mathbb{T},\mathrm{SL}(2,\mathbb{R}))$, and $F_{+}\in\mathcal{A}_{\sigma_{+}}(\mathbb{T},\mathrm{sl}(2,\mathbb{R}))$ such that
    \begin{equation*}
        B(x+\alpha)^{-1}Ae^{F(x)} B(x)=A_{+}e^{F_{+}(x)}.
    \end{equation*}
    Moreover, $B(x+1)=(-1)^{\deg B} B(x)$. Let $N=\frac{2|\log \varepsilon|}{\sigma-\sigma_{+}}$. Then
    \begin{enumerate}
        \item If $A\in \mathcal{NR}_{\ell}(N,\varepsilon^{\frac{1}{10}})$, then $B(\cdot)=e^{Y(\cdot)}$ with
        \begin{equation*}
            \|Y\|_{\sigma}\leqslant \varepsilon^{\frac{1}{2}}, \quad \|F_{+}\|_{\sigma_{+}}\leqslant 4\varepsilon^{3},\quad \|A-A_{+}\|\leqslant 4\|A\|\varepsilon.
        \end{equation*}

        \item If $A\in\mathcal{RS}_{\ell}(N,\varepsilon^{\frac{1}{10}})$, then there exists a unique $n_{*}\in\mathbb{Z}$ with $0<\ell(n_{*})\leqslant N$ and $\|2\varrho-n_{*}\alpha\|_{\mathbb{T}}<\varepsilon^{\frac{1}{10}}$. Moreover,
        \begin{equation*}
            \operatorname{deg}B=n_{*},\qquad \|B\|_{C^{0}}\leqslant\exp\big(|\log\varepsilon|^{\frac{1+\rho}{2}}\big),
        \end{equation*}
        and there exists $\chi_{+}\in \{\pm 1\}$ such that
        \begin{equation*}
            \|F_{+}\|_{\sigma_{+}}\leqslant\varepsilon^{100},\qquad \|A_{+}-\chi_{+} \mathrm{I}\|\leqslant 10\varepsilon^{\frac{1}{10}}.
        \end{equation*}
    \end{enumerate}
\end{theorem}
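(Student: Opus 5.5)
The proof follows the standard one-step scheme of Hou--You / Cai--Chavaudret--You--Zhou, with every Fourier quantity measured in $\ell$. We use Lemma~\ref{elimination} to remove the non-resonant part of $F$, and the splitting depends on which modes are non-resonant for $L_A$.

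\textbf{Non-resonant case.} Suppose $A\in\mathcal{NR}_\ell(N,\varepsilon^{1/10})$. Diagonalize $A$ (or put it in normal form) and split $\mathcal{A}_\sigma(\mathbb{T},\mathrm{sl}(2,\mathbb{R}))$ into two pieces:
\begin{itemize}
\item Modes $\mathcal{T}_N F$ with $0<\ell(n)\leqslant N$. On these, the eigenvalues of $L_A$ on the off-diagonal entries are $e^{2\pi i(n\alpha\pm2\varrho)}-1$, of size $\gtrsim\varepsilon^{1/10}$. On the diagonal entries they are $e^{2\pi i n\alpha}-1$, and $\mathrm{DC}_\ell(\gamma,\tau)$ gives $|e^{2\pi in\alpha}-1|\gtrsim\gamma e^{-\tau N^\rho}$.
\item The zero mode and $\mathcal{R}_N F$. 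These form $\mathcal{A}^{\mathrm{re}}$.
\end{itemize}
Choose $\eta=\min\{\varepsilon^{1/10},\gamma e^{-\tau N^\rho}\}$ up to $\|A\|$-constants. The hypothesis $\varepsilon<c\exp(-C(\sigma-\sigma_+)^{-2\rho/(1-\rho)})$ with $N=2|\log\varepsilon|/(\sigma-\sigma_+)$ gives $\tau N^\rho\leqslant\frac{1}{10}|\log\varepsilon|$. Indeed, $N^\rho\lesssim|\log\varepsilon|^\rho(\sigma-\sigma_+)^{-\rho}$, so we need $|\log\varepsilon|^{1-\rho}\gtrsim(\sigma-\sigma_+)^{-\rho}$, which follows from the exponent $2\rho/(1-\rho)$ with room to spare. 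Hence $\eta\geqslant\varepsilon^{1/5}>13\|A\|^2\varepsilon^{1/2}$, and Lemma~\ref{elimination} applies.

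The lemma returns $F^{\mathrm{re}}=\widehat{F^{\mathrm{re}}}(0)+\mathcal{R}_N(\cdot)$. We then set $A_+=Ae^{\widehat{F^{\mathrm{re}}}(0)}$, which gives $\|A-A_+\|\leqslant4\|A\|\varepsilon$. The remainder is handled by Proposition~\ref{banachalgebra}(2): $\|\mathcal{R}_N\|_{\sigma_+}\leqslant e^{-(\sigma-\sigma_+)N}\cdot2\varepsilon=2\varepsilon^3$. Finally, the BCH/Banach-algebra estimates of Proposition~\ref{banachalgebra}(3) split off the constant and give $\|F_+\|\leqslant4\varepsilon^3$.

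\textbf{Resonant case.} Suppose there is $n_*$ with $0<\ell(n_*)\leqslant N$ and $\|2\varrho-n_*\alpha\|<\varepsilon^{1/10}$.
\begin{enumerate}
\item \emph{Uniqueness of $n_*$.} If $n_*'$ is another such index, then $\|(n_*-n_*')\alpha\|<2\varepsilon^{1/10}$ while $\ell(n_*-n_*')\leqslant2N$. But $\mathrm{DC}_\ell$ gives $\|(n_*-n_*')\alpha\|\geqslant\gamma e^{-\tau(2N)^\rho}\gg\varepsilon^{1/10}$ by the same scale comparison, a contradiction.
\item \emph{Removing the other modes.} Conjugate $A$ to $\mathrm{diag}(e^{2\pi i\varrho},e^{-2\pi i\varrho})$ by a matrix of norm controlled by $\|A\|$; this requires $A$ elliptic or near-parabolic, which follows from the smallness of $\varrho$ relative to $\alpha$. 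Apply Lemma~\ref{elimination} with $\mathcal{A}^{\mathrm{re}}$ consisting of three parts: the diagonal zero mode, the off-diagonal modes $\pm n_*$, and $\mathcal{R}_N$. All other modes with $\ell(n)\leqslant N$ are non-resonant at level $\gtrsim\varepsilon^{1/10}$, because they differ from $n_*$ by a nonzero index of length $\leqslant2N$, and the uniqueness argument applies. The remaining resonant perturbation is then $\widehat F(0)+\widehat F(n_*)e^{2\pi in_*x}+\text{c.c.}$ plus $O(\varepsilon^3)$.
\item \emph{Rotating away the resonance.} Conjugate by $Q(x)=M^{-1}\mathrm{diag}(e^{-\pi in_*x},e^{\pi in_*x})M$ (so that $Q=R_{-n_*x/2}$, up to sign conventions), which has degree $n_*$ and satisfies $Q(x+1)=(-1)^{n_*}Q(x)$. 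This turns the resonant mode into a constant and sends the new constant to within $10\varepsilon^{1/10}$ of $\chi_+\mathrm{I}$.
\item \emph{Norm bounds.} With the canonical extension $\bar\ell(n_*/2)\leqslant\ell(n_*)/2+\ell(1)/2$, we get $\|Q\|_{\sigma_+}\leqslant e^{\sigma\ell(n_*)}$. In $C^0$, $Q$ is a rotation, so $\|B\|_{C^0}$ is governed by the normalizing matrix, whose norm is at most $\varepsilon^{-O(1/10)}$. Since $\varepsilon^{-1/5}\leqslant\exp(|\log\varepsilon|^{(1+\rho)/2})$, the stated $C^0$ bound follows.
\item \emph{Smallness of $F_+$.} To get $\varepsilon^{100}$, the conjugation by $Q$ multiplies norms by $e^{\sigma N}$, which is only $\varepsilon^{-2\sigma/(\sigma-\sigma_+)}$. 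So before rotating we iterate the elimination finitely many times, or equivalently truncate at a larger scale $N'\sim C|\log\varepsilon|/(\sigma-\sigma_+)$, to make the remainder $\lesssim\varepsilon^{100}e^{-\sigma N}$.
\end{enumerate}

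The main obstacle is step 5: keeping the width loss from the resonant rotation, the arithmetic loss $e^{\tau N^\rho}$, and the required $\varepsilon^{100}$ mutually consistent. This bookkeeping is exactly what fixes the threshold $\exp(-C(\sigma-\sigma_+)^{-2\rho/(1-\rho)})$. I would first try an intermediate width $\sigma'=(\sigma+\sigma_+)/2$, do the elimination at width $\sigma'$ with $N$ chosen so that $e^{-(\sigma-\sigma')N}\leqslant\varepsilon^{200}$, and then absorb the rotation loss into the passage from $\sigma'$ to $\sigma_+$.
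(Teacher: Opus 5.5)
Your outline matches the paper's: eliminate with Lemma~\ref{elimination} and truncate, and in the resonant case normalize $A$, keep only the diagonal zero mode and the resonant off-diagonal mode, then rotate by $R_{n_*x/2}$. However, neither of the two estimates that make up the resonant case is established.

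\textbf{The $C^0$ bound.} In step~4 you bound the normalizing matrix by $\varepsilon^{-O(1/10)}$ and then use $\varepsilon^{-1/5}\leqslant\exp(|\log\varepsilon|^{\frac{1+\rho}{2}})$. This inequality is reversed for small $\varepsilon$: since $\frac{1+\rho}{2}<1$, the right side is subpolynomial in $\varepsilon^{-1}$.

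Step~2's claim that the normalizer is controlled by $\|A\|$ is also wrong; its norm can be as large as $(\|A\|/\|2\varrho\|_{\mathbb T})^{1/2}$. The same caveat applies to diagonalizing $A$ in your non-resonant case, where the paper takes $\eta=\varepsilon^{1/3}$, leaving room for non-normal $A$.

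What you need is $\|P\|\leqslant2\sqrt{\|A\|/\|2\varrho\|_{\mathbb T}}$ together with
\[
\|2\varrho\|_{\mathbb T}\geqslant\|n_*\alpha\|_{\mathbb T}-\varepsilon^{\frac{1}{10}}\geqslant\frac{\gamma}{2}e^{-\tau\ell(n_*)^{\rho}}.
\]
This lower bound comes from $2\varrho$ being close to $n_*\alpha$ together with the adapted Diophantine condition, not from $\varrho$ being small. It gives $\|P\|\lesssim e^{\tau N^{\rho}/2}$ with $N^{\rho}\asymp(|\log\varepsilon|/(\sigma-\sigma_+))^{\rho}$. The requirement $\tau N^{\rho}/2\leqslant|\log\varepsilon|^{\frac{1+\rho}{2}}$ amounts to $|\log\varepsilon|\gtrsim(\sigma-\sigma_+)^{-\frac{2\rho}{1-\rho}}$; this is one of the two places where the threshold exponent is used. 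The normalization also enlarges the perturbation to $\|P^{-1}FP\|_{\sigma}\leqslant\varepsilon\|P\|^{2}\leqslant\varepsilon^{9/10}=:\tilde\varepsilon$, which you do not track.

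\textbf{The bound $\|F_+\|_{\sigma_+}\leqslant\varepsilon^{100}$, which you leave open.} Truncating at a scale $N'>N$ is legitimate only if every $n\neq n_*$ with $\ell(n)\leqslant N'$ is still non-resonant, and you checked this only for $\ell(n)\leqslant N$. Iterating the elimination with the same splitting leaves $\mathcal{R}_N$ untouched, so that is not an alternative.

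The missing fact is Lemma~\ref{uniquen}. Because the arithmetic loss is $e^{\tau\ell^{\rho}}$ with $\rho<1$, one has $\|m\alpha\|_{\mathbb T}\geqslant2\varepsilon^{1/10}$ whenever $0<\ell(m)\leqslant(|\log\varepsilon|/(20\tau))^{1/\rho}$. Applying this to $m=n-n_*$ shows that $n_*$ is the only resonance in the ball $\ell(n)\leqslant\widetilde N=(|\log\varepsilon|/(20\tau))^{1/\rho}-N$, whose radius is superlinear in $|\log\varepsilon|$. Truncating at $\widetilde N$ and paying $e^{\sigma_+\ell(n_*)}\leqslant e^{\sigma_+N}$ for the rotation on the high modes gives
\[
\|G_1\|_{\sigma_+}\leqslant2\tilde\varepsilon\exp\Big(\sigma N-(\sigma-\sigma_+)\Big(\frac{|\log\varepsilon|}{20\tau}\Big)^{1/\rho}\Big)<\frac{1}{2}\varepsilon^{100}
\]
under the stated threshold.

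Your tentative choice $e^{-(\sigma-\sigma')N'}\leqslant\varepsilon^{200}$ with $\sigma'=\frac{\sigma+\sigma_+}{2}$ does not suffice in general. The factor $e^{\sigma_+N}=\varepsilon^{-2\sigma_+/(\sigma-\sigma_+)}$ comes from shifting modes, not from width, and it exceeds any fixed power of $\varepsilon^{-1}$ when $\sigma_+/(\sigma-\sigma_+)$ is large. Beating it requires $N'$ of order $\sigma|\log\varepsilon|/(\sigma-\sigma_+)^{2}$, which is admissible only through this extended non-resonance.

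Finally, your bound $\bar\ell(n_*/2)\leqslant\frac{\ell(n_*)+\ell(1)}{2}$ is false in general: for $\ell_3$ and $n_*=3^m$ the left side grows like $m$, while the right side is at most $\frac{m}{2}+1$. It is not needed, though, since only $\|B\|_{C^0}\leqslant2\|P\|$ is claimed.
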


\begin{proof}
    We first consider the non-resonant case $A\in \mathcal{NR}_{\ell}(N,\varepsilon^{\frac{1}{10}})$.

    Let $\eta=\varepsilon^{\frac{1}{3}}$ and decompose 
    \begin{equation*}
        \mathcal{A}_{\sigma}(\mathbb{T},\mathrm{sl}(2,\mathbb{R}))=\mathcal{A}_{\sigma}^{\mathrm{nre}}(\eta)\oplus\mathcal{A}_{\sigma}^{\mathrm{re}}(\eta),
    \end{equation*}
    where
    \begin{equation}\label{Bre1}
        \begin{split}
            \mathcal{A}_{\sigma}^{\mathrm{nre}}(\eta)&=\mathcal{A}_{\sigma}(\mathbb{T},\mathrm{sl}(2,\mathbb{R}))\cap\big\{F(x)=\mathcal{T}_{N}F(x)-\widehat{F}(0)\big\},\\
            \mathcal{A}_{\sigma}^{\mathrm{re}}(\eta)&=\mathcal{A}_{\sigma}(\mathbb{T},\mathrm{sl}(2,\mathbb{R}))\cap\big\{F(x)=\mathcal{R}_{N}F(x)+\widehat{F}(0)\big\}.
        \end{split}
    \end{equation}
    Since $\alpha\in\mathrm{DC}(\gamma,\tau)$, for every $n\in\mathbb{Z}$ satisfying $0<\ell(n)\leqslant N$, one has
    \begin{equation}\label{sd1}
        \|n\alpha\|_{\mathbb{T}}\geqslant\gamma e^{-\tau\ell(n)^{\rho}}\geqslant\gamma e^{-\tau N^{\rho}}>\varepsilon^{\frac{1}{10}}.
    \end{equation}
    Since $A\in \mathcal{NR}_{\ell}(N,\varepsilon^{\frac{1}{10}})$, the non-resonance condition also gives
    \begin{equation}\label{sd2}
        \|2\varrho-n\alpha\|_{\mathbb{T}}\geqslant\varepsilon^{\frac{1}{10}}
    \end{equation}
    for every $n\in\mathbb{Z}$ satisfying $0<\ell(n)\leqslant N$. Combining \eqref{sd1} and \eqref{sd2} with \cite[Lemma 1]{MR1167299}, we deduce that the operator $L_{A}^{-1}: \mathcal{A}_{\sigma}^{\mathrm{nre}}(\eta)\to \mathcal{A}_{\sigma}^{\mathrm{nre}}(\eta)$ is bounded, satisfying
    \begin{equation*}
        \|L_{A}^{-1}\|\leqslant \varepsilon^{-\frac{1}{3}}=\frac{1}{\eta}.
    \end{equation*}
    By the smallness assumption, one has $\varepsilon<(4\|A\|)^{-4}$ and $\eta>13\|A\|^{2}\varepsilon^{\frac{1}{2}}$. Therefore, by \hyperref[elimination]{Lemma~\ref*{elimination}}, there exist $Y\in \mathcal{A}_{\sigma}^{\mathrm{nre}}(\eta)$ and $F^{\mathrm{re}}\in \mathcal{A}_{\sigma}^{\mathrm{re}}(\eta)$ such that
    \begin{equation*}
        e^{-Y(x+\alpha)} Ae^{F(x)} e^{Y(x)}=Ae^{F^{\mathrm{re}}(x)},
    \end{equation*}
    with $\|Y\|_{\sigma}\leqslant \varepsilon^{\frac{1}{2}}$ and $\|F^{\mathrm{re}}\|_{\sigma}\leqslant 2\varepsilon$. Let $B(x)=e^{Y(x)}$, $A_{+}=Ae^{\widehat{F}^{\mathrm{re}}(0)}$, and $F_{+}(x)=\log (e^{-\widehat{F}^{\mathrm{re}}(0)} e^{F^{\mathrm{re}}(x)})$. 
    Since $\|\widehat{F}^{\mathrm{re}}(0)\|\leqslant 2\varepsilon$, one has
    \begin{equation*}
        \|A_{+}-A\|\leqslant\|A\|\,\|e^{\widehat{F}^{\mathrm{re}}(0)}-\mathrm{I}\|\leqslant 4\|A\|\varepsilon.
    \end{equation*}
    It is trivial that $B(x+1)=(-1)^{\deg B} B(x)$ because $B$ is one-periodic and $\deg B=0$. Finally,
    by \eqref{Bre1} and \hyperref[banachalgebra]{Proposition~\ref*{banachalgebra}},
    \begin{equation}\label{rest}
        \begin{split}
            \|F^{\mathrm{re}}-\widehat{F}^{\mathrm{re}}(0)\|_{\sigma_{+}}&=\|\mathcal{R}_{N}F^{\mathrm{re}}\|_{\sigma_{+}}\\
            &\leqslant e^{-(\sigma-\sigma_{+})N}\|F^{\mathrm{re}}\|_{\sigma}\\
            &\leqslant 2\varepsilon e^{-2|\log\varepsilon|}=2\varepsilon^{3}.
        \end{split}
    \end{equation}
    The Baker--Campbell--Hausdorff (BCH) formula and \eqref{rest} therefore give
    \begin{equation*}
        \|F_{+}\|_{\sigma_{+}}\leqslant 4\varepsilon^{3}.
    \end{equation*}
    This proves the non-resonant case.

    We now consider the resonant case $A\in \mathcal{RS}_{\ell}(N,\varepsilon^{\frac{1}{10}})$. We only need to consider the case where $A$ is elliptic with eigenvalues $\chi e^{\pm 2\pi i\varrho}$ where $\varrho\in\mathbb{R}$. Fix an $n_{*}$ with $0<\ell(n_{*})\leqslant N$ and $\|2\varrho-n_{*}\alpha\|_{\mathbb{T}}<\varepsilon^{1/10}$. By \eqref{sd1},
    \begin{equation}\label{drho}
        \begin{split}
            \|2\varrho\|_{\mathbb{T}}&\geqslant\|n_{*}\alpha\|_{\mathbb{T}}-\|2\varrho-n_{*}\alpha\|_{\mathbb{T}}\\
            &\geqslant\gamma e^{-\tau\ell(n_{*})^{\rho}}-\varepsilon^{\frac{1}{10}}\\
            &\geqslant\frac{\gamma}{2}e^{-\tau\ell(n_{*})^{\rho}}.
        \end{split}
    \end{equation}
    Let 
    \begin{equation*}
        M=\frac{1}{1+i}\begin{pmatrix}1&-i\\1&i\end{pmatrix}.
    \end{equation*}
    By \cite[Lemma 8.1]{MR2969277}, there exists $P\in \mathrm{SL}(2,\mathbb{R})$ such that
    \begin{equation*}
        P^{-1}AP= \widetilde{A}\coloneqq \chi M^{-1}\exp \begin{pmatrix}2\pi i\varrho&0\\0&-2\pi i\varrho\end{pmatrix}M,
    \end{equation*}
    with
    \begin{equation*}
        \|P\|\leqslant 2\sqrt{\frac{\|A\|}{\|2\varrho\|_{\mathbb{T}}}}\leqslant \frac{1}{2}\exp \big(|\log \varepsilon|^{\frac{1+\rho}{2}}\big),
    \end{equation*}
    where the last inequality uses \eqref{drho}. Moreover, $P^{-1}Ae^{F(x)}P= \widetilde{A}e^{\widetilde{F}(x)}$, where $\widetilde{F}(x)=P^{-1}F(x)P\in \mathcal{A}_{\sigma}$ satisfies
    \begin{equation*}
        \|\widetilde{F}\|_{\sigma}\leqslant \varepsilon \|P\|^{2}\leqslant \varepsilon^{\frac{9}{10}}\eqqcolon \tilde{\varepsilon}.
    \end{equation*}

    \begin{lemma}\label{uniquen}
    Let $\widetilde{N}=\big(\frac{|\log \varepsilon|}{20\tau}\big)^{\frac{1}{\rho}}-N$. Then
    \begin{equation*}
        \|n\alpha\|_{\mathbb{T}} \geqslant \varepsilon^{\frac{1}{10}} \quad \text{for all } 0<\ell(n)\leqslant\widetilde{N}.
    \end{equation*}
    Moreover, there exists a unique $n_{*}\in\mathbb{Z}$ satisfying $0<\ell(n_{*})\leqslant \widetilde{N}$ and $\|2\varrho-n_{*}\alpha\|_{\mathbb{T}}<\varepsilon^{\frac{1}{10}}$. Consequently,
    \begin{equation*}
        \|2\varrho-n\alpha\|_{\mathbb{T}} \geqslant \varepsilon^{\frac{1}{10}} \quad \text{for all } \ell(n)\leqslant\widetilde{N} \text{ with } n\neq n_{*}.
    \end{equation*}
\end{lemma}

\begin{proof}
    For any $n\in\mathbb{Z}$ with $0<\ell(n)\leqslant\widetilde{N}$, $\alpha\in\mathrm{DC}(\gamma,\tau)$ implies
    \begin{equation*}
        \|n\alpha\|_{\mathbb{T}} \geqslant \gamma e^{-\tau\ell(n)^{\rho}} \geqslant \gamma e^{-\tau \widetilde{N}^{\rho}} \geqslant 2\varepsilon^{\frac{1}{10}}.
    \end{equation*}

    Since $A\in \mathcal{RS}_{\ell}(N,\varepsilon^{\frac{1}{10}})$, there exists $n_{*}\in\mathbb{Z}$ with $0<\ell(n_{*})\leqslant N$ such that $\|2\varrho-n_{*}\alpha\|_{\mathbb{T}}<\varepsilon^{\frac{1}{10}}$. Suppose there exists another integer $n_{*}'\neq n_{*}$ such that $\|2\varrho-n_{*}'\alpha\|_{\mathbb{T}}<\varepsilon^{\frac{1}{10}}$. Then $\|(n_{*}-n_{*}')\alpha\|_{\mathbb{T}}<2\varepsilon^{\frac{1}{10}}$. However, since $\alpha\in\mathrm{DC}(\gamma,\tau)$, we also have
    \begin{equation*}
        2\varepsilon^{\frac{1}{10}} > \|(n_{*}-n_{*}')\alpha\|_{\mathbb{T}} \geqslant \gamma e^{-\tau \ell (n_{*}-n_{*}')^{\rho}}.
    \end{equation*}
    By the symmetry and subadditivity of $\ell$, along with the bound $\ell(n_{*})\leqslant N$, we deduce
    \begin{equation*}
        \ell(n_{*}') \geqslant \ell(n_{*}-n_{*}') -\ell(n_{*}) \geqslant \bigg(\frac{1}{\tau}\log \frac{\gamma}{2\varepsilon^{\frac{1}{10}}}\bigg)^{1/\rho}-N > \widetilde{N}.
    \end{equation*}
    This proves the uniqueness of $n_{*}$ within the ball of radius $\widetilde{N}$, which immediately yields the final estimate.
\end{proof}

    Let $\eta=\tilde{\varepsilon}^{\frac{1}{3}}$. For every $F\in\mathcal{A}_{\sigma}(\mathbb{T},\mathrm{sl}(2,\mathbb{R}))$, write
    \begin{equation}\label{expansionF}
        MF(x)M^{-1}=\begin{pmatrix}if_{11}(x)&f_{12}(x)\\\overline{f_{12}(x)}&-if_{11}(x)\end{pmatrix}.
    \end{equation}
    Define
    \begin{equation}\label{PF}
        \mathbb{P}_{n_{*}}F(x)\coloneqq M^{-1}\begin{pmatrix}i\widehat{f}_{11}(0)& \widehat{f}_{12}(n_{*})e^{2\pi i n_{*}x}\\\overline{\widehat{f}_{12}(n_{*})}e^{-2\pi i n_{*}x}&-i\widehat{f}_{11}(0)\end{pmatrix}M.
    \end{equation}
    Decompose
    \begin{equation*}
        \mathcal{A}_{\sigma}(\mathbb{T},\mathrm{sl}(2,\mathbb{R}))=\mathcal{A}_{\sigma}^{\mathrm{nre}}(\eta)\oplus\mathcal{A}_{\sigma}^{\mathrm{re}}(\eta),
    \end{equation*}
    where
    \begin{equation}\label{Bre2}
        \begin{split}
            \mathcal{A}_{\sigma}^{\mathrm{nre}}(\eta)&=\mathcal{A}_{\sigma}(\mathbb{T},\mathrm{sl}(2,\mathbb{R}))\cap\big\{F(x)=\mathcal{T}_{\widetilde{N}}F(x)-\mathbb{P}_{n_{*}}F(x)\big\},\\
            \mathcal{A}_{\sigma}^{\mathrm{re}}(\eta)&=\mathcal{A}_{\sigma}(\mathbb{T},\mathrm{sl}(2,\mathbb{R}))\cap\big\{F(x)=\mathcal{R}_{\widetilde{N}}F(x)+\mathbb{P}_{n_{*}}F(x)\big\}.
        \end{split}
    \end{equation}
    Then $\mathcal{A}_{\sigma}^{\mathrm{nre}}(\eta)$ is a closed invariant subspace. Moreover, it follows from \hyperref[uniquen]{Lemma~\ref*{uniquen}} that the operator $L_{\widetilde{A}}^{-1}: \mathcal{A}_{\sigma}^{\mathrm{nre}}(\eta)\to \mathcal{A}_{\sigma}^{\mathrm{nre}}(\eta)$ is bounded, satisfying
    \begin{equation*}
        \|L_{\widetilde{A}}^{-1}\|\leqslant \varepsilon^{-\frac{3}{10}}= \tilde{\varepsilon}^{-\frac{1}{3}}=\frac{1}{\eta}.
    \end{equation*}
    Hence, by \hyperref[elimination]{Lemma~\ref*{elimination}}, there exist $Y\in\mathcal{A}_{\sigma}^{\mathrm{nre}}(\eta)$ and $\widetilde{F}^{\mathrm{re}}\in\mathcal{A}_{\sigma}^{\mathrm{re}}(\eta)$ such that
    \begin{equation*}
        e^{-Y(x+\alpha)} \widetilde{A}e^{\widetilde{F}(x)} e^{Y(x)}=\widetilde{A}e^{\widetilde{F}^{\mathrm{re}}(x)},
    \end{equation*}
    with $\|Y\|_{\sigma}\leqslant \tilde{\varepsilon}^{\frac{1}{2}}$ and $\|\widetilde{F}^{\mathrm{re}}\|_{\sigma}\leqslant 2\tilde{\varepsilon}$. Moreover, by \eqref{Bre2}, $\widetilde{F}^{\mathrm{re}}$ takes the form
    \begin{equation}\label{Fre}
        \widetilde{F}^{\mathrm{re}}(x)= \mathcal{R}_{\widetilde{N}}\widetilde{F}^{\mathrm{re}}(x)+\mathbb{P}_{n_{*}} \widetilde{F}^{\mathrm{re}}(x).
    \end{equation}

    Define the rotation $Z(\cdot): 2\mathbb{T}\to \mathrm{SO}(2,\mathbb{R})$ as
    \begin{equation*}
        Z(x)=R_{n_{*}x/2}=M^{-1}\exp\begin{pmatrix}{\pi i n_{*}x}&0\\0&{-\pi i n_{*}x}\end{pmatrix}M.
    \end{equation*}
    Choose $m_{*}\in\mathbb{Z}$ such that $|2\varrho-n_{*}\alpha-m_{*}|=\|2\varrho-n_{*}\alpha\|_{\mathbb{T}}<\varepsilon^{\frac{1}{10}}$.
    Then
    \begin{equation*}
        Z(x+\alpha)^{-1}\widetilde{A}Z(x)=(-1)^{m_{*}}\chi e^{\widehat{A}},
    \end{equation*}
    where
    \begin{equation*}
        \widehat{A}=M^{-1}\begin{pmatrix}\pi i(2\varrho-n_{*}\alpha-m_{*})&0\\0&-\pi i(2\varrho-n_{*}\alpha-m_{*})\end{pmatrix}M.
    \end{equation*}
    In particular, we have $\|\widehat{A}\|\leqslant\pi\varepsilon^{\frac{1}{10}}$.

    Let
    \begin{equation*}
        \begin{split}
            G_{1}(x)&=Z(x)^{-1}\mathcal{R}_{\widetilde{N}}\widetilde{F}^{\mathrm{re}}(x)Z(x),\\
            G_{2}(x)&=Z(x)^{-1}\mathbb{P}_{n_{*}}\widetilde{F}^{\mathrm{re}}(x)Z(x).
        \end{split}
    \end{equation*}
    By \eqref{Fre},
    \begin{equation*}
        Z(x)^{-1}\widetilde{F}^{\mathrm{re}}(x)Z(x)=G_{1}(x)+G_{2}(x).
    \end{equation*}
    Note that $Z(\cdot)$ is defined on $2\mathbb{T}$, but $G_{1}(\cdot)$ and $G_{2}(\cdot)$ are still defined on $\mathbb{T}$ since $Z(x+1)=(-1)^{n_{*}} Z(x)$. 
    More precisely, if $F$ satisfies \eqref{expansionF}, then
    \begin{equation*}
        MZ(x)^{-1}F(x)Z(x)M^{-1}=\begin{pmatrix}if_{11}(x)&f_{12}(x)e^{-2\pi i n_{*}x}\\\overline{f_{12}(x)}e^{2\pi i n_{*}x}&-if_{11}(x)\end{pmatrix}.
    \end{equation*}

    We first estimate $G_{1}$. By the subadditivity of $\ell$, if $\ell(n)>\widetilde{N}$, then
    \begin{equation*}
        e^{\sigma_{+}\ell(n\pm n_{*})}\leqslant e^{\sigma_{+}\ell(n_{*})}e^{\sigma_{+}\ell(n)}\leqslant e^{\sigma_{+}\ell(n_{*})-(\sigma-\sigma_{+})\widetilde{N}}e^{\sigma\ell(n)}.
    \end{equation*}
    Therefore,
    \begin{equation*}
        \begin{split}
            \|G_{1}\|_{\sigma_{+}}&\leqslant e^{\sigma_{+}\ell(n_{*})-(\sigma-\sigma_{+})\widetilde{N}}\|\mathcal{R}_{\widetilde{N}}\widetilde{F}^{\mathrm{re}}\|_{\sigma}\\
            &\leqslant 2\widetilde{\varepsilon}e^{\sigma_{+}N-(\sigma-\sigma_{+})(\big(\frac{|\log \varepsilon|}{20\tau}\big)^{\frac{1}{\rho}}-N)}\\
            &=2\widetilde{\varepsilon}e^{\sigma N-(\sigma-\sigma_{+})\big(\frac{|\log \varepsilon|}{20\tau}\big)^{\frac{1}{\rho}}}\\
            &<\frac{1}{2}\varepsilon^{100}.
        \end{split}
    \end{equation*}

    We next estimate $G_{2}$. By \eqref{PF}, one may write 
    \begin{equation*}
        \mathbb{P}_{n_{*}} \widetilde{F}^{\mathrm{re}}(x)=M^{-1}\begin{pmatrix}it&\nu e^{2\pi i n_{*}x}\\\overline{\nu}e^{-2\pi i n_{*}x}&-it\end{pmatrix}M,
    \end{equation*}
    for some $t\in\mathbb{R}$ and $\nu\in\mathbb{C}$. Hence
    \begin{equation*}
        G_{2}=M^{-1}\begin{pmatrix}i t&\nu\\\overline{\nu}&-it\end{pmatrix}M\in\mathrm{sl}(2,\mathbb{R}).
    \end{equation*}

    Define $A_{+}=(-1)^{m_{*}}\chi e^{\widehat{A}}e^{G_{2}}$ and $F_{+}(x)=\log\big(e^{-G_{2}}e^{G_{2}+G_{1}(x)}\big)$. Then
    \begin{equation*}
        \begin{split}
            Z(x+\alpha)^{-1}\widetilde{A}e^{\widetilde{F}^{\mathrm{re}}(x)}Z(x)&=(-1)^{m_{*}} \chi e^{\widehat{A}}e^{G_{2}+G_{1}(x)}\\
            &=A_{+}e^{-G_{2}}e^{G_{2}+G_{1}(x)}\\
            &=A_{+}e^{F_{+}(x)}.
        \end{split}
    \end{equation*}
    Let $\chi_{+}=(-1)^{m_{*}}\chi\in \{\pm 1\}$. Since $\|\widehat{A}\|\leqslant\pi\varepsilon^{\frac{1}{10}}$ and $\|G_{2}\|\leqslant 2\varepsilon^{\frac{9}{10}}$, by BCH formula, one has
    \begin{equation*}
        \|A_{+}-\chi_{+}\mathrm{I}\|\leqslant 2(\|\widehat{A}\|+ \|{G_{2}}\|)\leqslant 10\varepsilon^{\frac{1}{10}}.
    \end{equation*}        
    Apply the BCH formula again,
    \begin{equation*}
        F_{+}(x)=G_{1}(x)+\frac{1}{2}[-G_{2},G_{1}(x)]+\cdots,
    \end{equation*}
    and therefore
    \begin{equation*}
        \|F_{+}\|_{\sigma_{+}}\leqslant 2\|G_{1}\|_{\sigma_{+}}\leqslant\varepsilon^{100}.
    \end{equation*}

    Finally, let $B(x)=Pe^{Y(x)}Z(x)\in\mathcal{A}_{\sigma_{+}}(2\mathbb{T},\mathrm{SL}(2,\mathbb{R}))$. We have
    \begin{equation*}
        B(x+\alpha)^{-1}Ae^{F(x)}B(x)=A_{+}e^{F_{+}(x)}.
    \end{equation*}
    Since $P$ and $e^{Y}$ are homotopic to the identity, we have $\operatorname{deg}B=\operatorname{deg}Z=n_{*}$. The direct calculation gives $B(x+1)=(-1)^{n_{*}} B(x)$.
    Moreover, since $Z\in \mathcal{A}_{\sigma_{+}}(2\mathbb{T},\mathrm{SO}(2,\mathbb{R}))$ with $\|Z\|_{C^{0}}=1$, we have 
    \begin{equation*}
        \begin{split}
            \|B\|_{C^{0}}&\leqslant 2\|P\| \leqslant\exp\big(|\log \varepsilon|^{\frac{1+\rho}{2}}\big).
        \end{split}
    \end{equation*}
    This completes the proof.
\end{proof}

\section{The KAM iteration}

Let $0<\varepsilon<1$ and $\varepsilon_{-1}=1$. For $j\geqslant 0$, define 
\begin{equation*}
    \varepsilon_{0}=\varepsilon,\quad \varepsilon_{j+1}=4\varepsilon_{j}^{3},\quad \sigma_{j}=\sigma'+\frac{\sigma-\sigma'}{j+1},\quad N_{j}=\frac{2|\log\varepsilon_{j}|}{\sigma_{j}-\sigma_{j+1}}.
\end{equation*}

\subsection{Reducibility and almost reducibility}

\begin{theorem}\label{KAM}
    Let $0<\rho<1$, $\alpha\in \mathrm{DC}(\gamma,\tau)$,  $0<\sigma'<\sigma$ and $A\in\mathrm{SL}(2,\mathbb{R})$. There exists $\varepsilon_{0}=\varepsilon_{0}(\rho, \|A\|,\sigma,\sigma',\gamma,\tau)$ such that if $F\in\mathcal{A}_{\sigma}(\mathbb{T},\mathrm{sl}(2,\mathbb{R}))$ satisfies $\|F\|_{\sigma}<\varepsilon_{0}$, then the following holds.
    
    \begin{enumerate}
        \item \label{item:AR} For every $j\geqslant 0$, there exist $B_{j}\in\mathcal{A}_{\sigma_{j}}(2\mathbb{T},\mathrm{SL}(2,\mathbb{R}))$, $A_{j}\in\mathrm{SL}(2,\mathbb{R})$, and $F_{j}\in\mathcal{A}_{\sigma_{j}}(\mathbb{T},\mathrm{sl}(2,\mathbb{R}))$ such that
        \begin{equation*}
            B_{j}(x+\alpha)^{-1}Ae^{F(x)}B_{j}(x)=A_{j}e^{F_{j}(x)},\quad \|F_{j}\|_{\sigma_{j}}\leqslant \varepsilon_{j}.
        \end{equation*}
        More precisely, if $A_{j}\in \mathcal{NR}_{\ell}(N_{j},\varepsilon_{j}^{\frac{1}{10}})$, then $B_{j+1}=B_{j}\widetilde{B}_{j}$ with
        \begin{equation*}
            \|\widetilde{B}_{j}-\mathrm{I}\|_{\sigma_{j+1}}\leqslant 2\varepsilon_{j}^{\frac{1}{2}},\quad \|A_{j+1}-A_{j}\|\leqslant 4\|A_{j}\|\varepsilon_{j}.
        \end{equation*}
        If $A_{j}\in \mathcal{RS}_{\ell}(N_{j},\varepsilon_{j}^{\frac{1}{10}})$, then $B_{j+1}=B_{j}\widetilde{B}_{j}$ with $\deg \widetilde{B}_{j}=n_{j}$ for some $0<\ell(n_{j})\leqslant N_{j}$, and there is $\chi_{j+1}\in \{\pm 1\}$ such that
        \begin{equation*}
            \|\widetilde{B}_{j}\|_{C^{0}}\leqslant\exp\big(|\log\varepsilon_{j}|^{\frac{1+\rho}{2}}\big),\quad \|A_{j+1}-\chi_{j+1}\mathrm{I}\|\leqslant 10\varepsilon_{j}^{\frac{1}{10}}.
        \end{equation*}
        Consequently, 
        \begin{equation*}
            \|B_{j}\|_{C^{0}}\leqslant \exp\big(C_{\rho}|\log \varepsilon_{j-1}|^{\frac{1+\rho}{2}}\big),\quad \|A_{j}\|\leqslant 2\|A\|.
        \end{equation*}

        \item \label{item:deg} For every $j\geqslant 1$, 
        \begin{equation*}
            B_{j}(x+1)=(-1)^{\deg B_{j}} B_{j}(x),\quad \ell(\deg B_{j}) \leqslant 2N_{j-1}.
        \end{equation*}

        \item \label{item:UT} For every $j\geqslant 0$, there exist $U_{j}\in \mathrm{SU}(2)$,  $\varrho_{j}\in i\mathbb{R}\cup\mathbb{R}$, and $\zeta_{j}\in\mathbb{C}$ such that
        \begin{equation*}
            U_{j}^{-1} A_{j}U_{j}=\chi_{j}\begin{pmatrix}e^{2\pi i\varrho_{j}}&\zeta_{j}\\0&e^{-2\pi i\varrho_{j}}\end{pmatrix} \quad \text{and} \quad \|B_{j}\|_{C^{0}}^{2} \, |\zeta_{j}|\leqslant 8\|A\|.
        \end{equation*}
        
        \item \label{item:RE} If there are only finitely many resonant steps, namely,
        \begin{equation*}
            \#\{j\geqslant 0: A_{j}\in \mathcal{RS}_{\ell}(N_{j}, \varepsilon_{j}^{\frac{1}{10}})\}<\infty,
        \end{equation*}
        then there exist $B\in\mathcal{A}_{\sigma'}(2\mathbb{T},\mathrm{SL}(2,\mathbb{R}))$ and $\bar{A}\in\mathrm{SL}(2,\mathbb{R})$ such that
        \begin{equation*}
            B(x+\alpha)^{-1}Ae^{F(x)}B(x)=\bar{A}.
        \end{equation*}
    \end{enumerate}
\end{theorem}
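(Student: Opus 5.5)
We prove the theorem by iterating \hyperref[onestep]{Theorem~\ref*{onestep}} with $(\sigma,\sigma_+,\varepsilon)=(\sigma_j,\sigma_{j+1},\varepsilon_j)$ and composing the resulting conjugacies, $B_{j+1}=B_j\widetilde B_j$, starting from $B_0=\mathrm I$, $A_0=A$, $F_0=F$.

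\emph{Admissibility of every step.} The first task is to check that each step satisfies the smallness hypothesis
\begin{equation*}
\varepsilon_j<c(\gamma,\|A_j\|)\exp\bigl(-C(\sigma_j-\sigma_{j+1})^{-2\rho/(1-\rho)}\bigr),
\end{equation*}
uniformly in $j$. Since $\sigma_j-\sigma_{j+1}\sim(\sigma-\sigma')j^{-2}$, the right-hand side decays like $\exp(-C'j^{4\rho/(1-\rho)})$. This is only stretched-exponential in $j$. By contrast, $|\log\varepsilon_j|\gtrsim 3^j|\log\varepsilon_0|$ grows exponentially in $j$. Hence a single choice of $\varepsilon_0$ handles all $j$ at once. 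The bound $\|A_j\|\leqslant 2\|A\|$ keeps $c$ uniform, so I prove it inductively alongside the other estimates. In nonresonant steps the product $\prod(1+4\varepsilon_j)$ stays bounded. After a resonant step, $A_{j+1}$ is $10\varepsilon_j^{1/10}$-close to $\pm\mathrm I$, which restarts the bound.

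\emph{Items (1) and (2).} The nonresonant bound is $\|\widetilde B_j-\mathrm I\|\leqslant 2\varepsilon_j^{1/2}$, obtained from $\|e^Y-\mathrm I\|\leqslant 2\|Y\|$ via \hyperref[banachalgebra]{Proposition~\ref*{banachalgebra}}. For the $C^0$ bound on $B_j$, I multiply the step bounds. Nonresonant steps contribute a convergent product. Each resonant step $k\leqslant j-1$ contributes $\exp(|\log\varepsilon_k|^{(1+\rho)/2})$, and the sum of these exponents is dominated by a geometric series ending at $|\log\varepsilon_{j-1}|^{(1+\rho)/2}$, which gives the constant $C_\rho$.

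For the degree, $\deg B_j=\sum n_k$ over the resonant steps $k<j$. Subadditivity of $\ell$ then gives $\ell(\deg B_j)\leqslant\sum N_k\leqslant 2N_{j-1}$, because $N_k$ grows at least geometrically: $|\log\varepsilon_k|$ triples while the denominators change only polynomially. The relation $B_j(x+1)=(-1)^{\deg B_j}B_j(x)$ is multiplicative under composition.

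\emph{Item (3).} This is the main obstacle. $U_j$ comes from Schur triangularization of $A_j$. The real content is the compensation estimate $\|B_j\|^2_{C^0}|\zeta_j|\leqslant 8\|A\|$. I would follow the standard argument of Hou–You and Avila–Fayad–Krikorian.
\begin{itemize}
\item Right after a resonant step, $A_{j+1}$ is near $\pm\mathrm I$ with $|\zeta_{j+1}|\lesssim\varepsilon_j^{1/10}$. We therefore need $\|B_{j+1}\|^2\varepsilon_j^{1/10}$ small. This follows from $\|B_{j+1}\|\leqslant\exp(C|\log\varepsilon_j|^{(1+\rho)/2})$, since $(1+\rho)/2<1$ makes it subpolynomial in $\varepsilon_j^{-1}$.
\item Along the subsequent nonresonant steps, $\zeta$ changes by $O(\varepsilon_k)$ and $B$ by factors $1+O(\varepsilon_k^{1/2})$, so the product stays controlled.
\item Before the first resonance, $\|B_j\|\leqslant 2$ and $|\zeta_j|\leqslant 2\|A_j\|$.
\end{itemize}
I would track carefully the constant in the hyperbolic and parabolic cases, where $\zeta$ can be of size $\|A\|$ but $B_j$ stays near $\mathrm I$.

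\emph{Item (4).} If resonances stop after step $j_0$, then from $j_0$ on every $\widetilde B_j=e^{Y_j}$ with $\|Y_j\|_{\sigma_j}\leqslant\varepsilon_j^{1/2}$ summable. The product $\prod_{j\geqslant j_0}\widetilde B_j$ therefore converges in $\mathcal A_{\sigma'}(\mathbb T)$, because all the norms $\|\cdot\|_{\sigma_j}$ dominate $\|\cdot\|_{\sigma'}$. The constants $A_j$ form a Cauchy sequence, since $\|A_{j+1}-A_j\|\leqslant 8\|A\|\varepsilon_j$. Also $F_j\to0$ in $\mathcal A_{\sigma'}$. Setting $B=B_{j_0}\prod_{j\geqslant j_0}\widetilde B_j$, which lies in $\mathcal A_{\sigma'}(2\mathbb T)$ because $B_{j_0}$ does and the algebra is Banach, and passing to the limit in the conjugation identity yields $B(x+\alpha)^{-1}Ae^{F(x)}B(x)=\bar A$.
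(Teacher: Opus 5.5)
Your proposal is correct and follows essentially the same route as the paper. You iterate Theorem~\ref{onestep} with $(\sigma_j,\sigma_{j+1},\varepsilon_j)$ and carry $\|A_j\|\leqslant 2\|A\|$ along the induction. The $C^0$ and degree bounds come from subadditivity and the geometric growth of $|\log\varepsilon_j|$ and $N_j$. You run the same three-case argument for $\|B_j\|_{C^0}^2|\zeta_j|$ and pass to the limit once resonances stop. Your explicit check that the one-step smallness condition holds uniformly in $j$ is something the paper leaves implicit.

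One tightening in item (3): rather than tracking how $\zeta$ changes along nonresonant steps, bound it directly by $|\zeta_{j+1}|\leqslant\|A_{j+1}-\chi_{k+1}\mathrm{I}\|$, where $k$ is the last resonant step. Before the first resonance, use $|\zeta_j|\leqslant\|A_j\|\leqslant 2\|A\|$ rather than $2\|A_j\|$. This is exactly how the paper obtains the constant $8\|A\|$.
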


\begin{proof}
    We first prove the iterative conjugacy and degree estimates in \hyperref[KAM]{Theorem~\ref*{KAM}} by induction. Let $C_{\rho}>1$ be a large constant such that
    \begin{equation}\label{Crho}
        C_{\rho }\Big(\frac{1}{2}\Big)^{\frac{1+\rho}{2}}\leqslant C_{\rho}-1.
    \end{equation}

    For $j=0$, let
    \begin{equation*}
        B_{0}=\mathrm{I},\qquad A_{0}=A,\qquad F_{0}=F.
    \end{equation*}
    Thus all the estimates hold naturally.

    Assume that $B_{i}$, $A_{i}$, and $F_{i}$ have been constructed for every $0\leqslant i\leqslant j$. Applying \hyperref[onestep]{Theorem~\ref*{onestep}} to $A_{j}e^{F_{j}}$, we obtain $\widetilde{B}_{j}\in\mathcal{A}_{\sigma_{j+1}}(2\mathbb{T},\mathrm{SL}(2,\mathbb{R}))$, $A_{j+1}\in\mathrm{SL}(2,\mathbb{R})$, and $F_{j+1}\in\mathcal{A}_{\sigma_{j+1}}(\mathbb{T},\mathrm{sl}(2,\mathbb{R}))$ such that
    \begin{equation*}
        \widetilde{B}_{j}(x+\alpha)^{-1} A_{j}e^{F_{j}(x)}\widetilde{B}_{j}(x)=A_{j+1}e^{F_{j+1}(x)}.
    \end{equation*}
    Let $B_{j+1}=B_{j}\widetilde{B}_{j}$. If $A_{j}\in \mathcal{NR}_{\ell}(N_{j},\varepsilon_{j}^{\frac{1}{10}})$, then
    \begin{equation}\label{NRiteration}
        \|\widetilde{B}_{j}-\mathrm{I}\|_{\sigma_{j+1}} \leqslant 2\varepsilon_{j}^{\frac{1}{2}},\quad \|F_{j+1}\|_{\sigma_{j+1}}\leqslant 4\varepsilon_{j}^{3}=\varepsilon_{j+1},
    \end{equation}
    together with
    \begin{equation}\label{Aincrement}
        \|A_{j+1}-A_{j}\|\leqslant 4\|A_{j}\|\varepsilon_{j}.
    \end{equation}
    Since $\widetilde{B}_{j}$ is close to identity,  $\deg \widetilde{B}_{j}=0$. Thus $\deg B_{j+1}=\deg B_{j}$ and
    \begin{equation*}
        \ell(\deg B_{j+1})=\ell(\deg B_{j})\leqslant 2N_{j-1}\leqslant 2N_{j}.
    \end{equation*}
    By the inductive assumption $B_{j}(x+1)=(-1)^{\deg B_{j}}B_{j}(x)$, we have
    \begin{equation*}
        B_{j+1}(x+1)=B_{j}(x+1)\widetilde{B}_{j}(x+1)=(-1)^{\deg B_{j+1}} B_{j+1}(x).
    \end{equation*}
    If $A_{j}\in \mathcal{RS}_{\ell}(N_{j},\varepsilon_{j}^{\frac{1}{10}})$, then 
    \begin{equation*}
        \|\widetilde{B}_{j}\|_{C^{0}}\leqslant\exp\big(|\log\varepsilon_{j}|^{\frac{1+\rho}{2}}\big),\quad \|F_{j+1}\|_{\sigma_{j+1}}\leqslant \varepsilon_{j}^{100}\leqslant \varepsilon_{j+1},
    \end{equation*}
    and there exists $\chi_{j+1}\in\{\pm 1\}$ such that
    \begin{equation}\label{Aresonant}
        \|A_{j+1}-\chi_{j+1}\mathrm{I}\|\leqslant 10\varepsilon_{j}^{\frac{1}{10}}.
    \end{equation}
    Since $\deg \widetilde{B}_{j}=n_{j}$, one has $\deg B_{j+1}=\deg B_{j}+n_{j}$ and thus
    \begin{equation*}
        B_{j+1}(x+1)=(-1)^{\deg B_{j}}(-1)^{n_{j}} B_{j}(x)\widetilde{B}_{j}(x)=(-1)^{\deg B_{j+1}} B_{j+1}(x).
    \end{equation*}
    By the subadditivity of $\ell$, 
    \begin{equation*}
        \ell(\deg B_{j+1})\leqslant \ell(\deg B_{j})+\ell(n_{j}) \leqslant 2N_{j-1}+N_{j}\leqslant 2N_{j}.
    \end{equation*}
    This proves the degree estimate.
    
    Combining the previous conjugacy with the one-step conjugacy gives
    \begin{equation*}
        B_{j+1}(x+\alpha)^{-1}Ae^{F(x)}B_{j+1}(x)=A_{j+1}e^{F_{j+1}(x)}.
    \end{equation*}
    Then by \eqref{Crho} and $|\log \varepsilon_{j-1}|\leqslant \frac{1}{2}|\log \varepsilon_{j}|$,
    \begin{equation*}
        \|B_{j+1}\|_{C^{0}} \leqslant \exp\big(C_{\rho}|\log \varepsilon_{j-1}|^{\frac{1+\rho}{2}}+|\log \varepsilon_{j}|^{\frac{1+\rho}{2}}\big)\leqslant \exp\big(C_{\rho}|\log\varepsilon_{j}|^{\frac{1+\rho}{2}}\big).
    \end{equation*}

    We next estimate $A_{j+1}$. Suppose first that no resonant step occurs among the first $j+1$ steps. By \eqref{Aincrement},
    \begin{equation*}
        \|A_{j+1}\|\leqslant\|A\|\prod_{i=0}^{j}(1+4\varepsilon_{i})\leqslant 2\|A\|.
    \end{equation*}
    Otherwise, let $k\leqslant j$ be the last resonant step. By \eqref{Aresonant} and \eqref{Aincrement},
    \begin{equation*}
        \|A_{j+1}\|\leqslant\|A_{k+1}\|\prod_{i=k+1}^{j}(1+4\varepsilon_{i})\leqslant\big(1+10\varepsilon_{k}^{\frac{1}{10}}\big)\prod_{i=k+1}^{j}(1+4\varepsilon_{i})\leqslant 2\|A\|.
    \end{equation*}
    This proves the iterative almost reducibility statement.
  
    We next study the structure of $A_{j}$. Since $A_{j}\in\mathrm{SL}(2,\mathbb{R})$, there exist $U_{j}\in\mathrm{SU}(2)$, $\chi_{j}\in\{\pm 1\}$, $\varrho_{j}\in i\mathbb{R}\cup\mathbb{R}$, and $\zeta_{j}\in\mathbb{C}$ such that
    \begin{equation*}
        U_{j}^{-1}A_{j}U_{j}=\chi_{j}\begin{pmatrix}e^{2\pi i\varrho_{j}}&\zeta_{j}\\0&e^{-2\pi i\varrho_{j}}\end{pmatrix}.
    \end{equation*}
    For $j=0$, one has
    \begin{equation*}
        |\zeta_{0}|\leqslant\|A\|,
    \end{equation*}
    and hence the desired estimate holds. We now estimate $\zeta_{j+1}$.

    Suppose first that $A_{j}\in \mathcal{RS}_{\ell}(N_{j},\varepsilon_{j}^{\frac{1}{10}})$. By \eqref{Aresonant},
    \begin{equation*}
        |\zeta_{j+1}|\leqslant\|A_{j+1}-\chi_{j+1}\mathrm{I}\|\leqslant 10\varepsilon_{j}^{\frac{1}{10}}.
    \end{equation*}
    Therefore,
    \begin{equation*}
        \|B_{j+1}\|_{C^{0}}^{2}|\zeta_{j+1}|\leqslant 10\exp\big(2C_{\rho}|\log\varepsilon_{j}|^{\frac{1+\rho}{2}}\big)\varepsilon_{j}^{\frac{1}{10}}\leqslant 1\leqslant 8\|A\|.
    \end{equation*}

    Suppose next that $A_{j}\in \mathcal{NR}_{\ell}(N_{j},\varepsilon_{j}^{\frac{1}{10}})$ and no resonant step occurs before it. By \eqref{NRiteration},
    \begin{equation*}
        \|B_{j+1}\|_{C^{0}}\leqslant\prod_{i=0}^{j}\big(1+2\varepsilon_{i}^{\frac{1}{2}}\big)\leqslant 2.
    \end{equation*}
    Since $|\zeta_{j+1}|\leqslant\|A_{j+1}\|\leqslant 2\|A\|$, it follows that
    \begin{equation*}
        \|B_{j+1}\|_{C^{0}}^{2}|\zeta_{j+1}|\leqslant 8\|A\|.
    \end{equation*}

    Finally, suppose that $A_{j}\in \mathcal{NR}_{\ell}(N_{j},\varepsilon_{j}^{\frac{1}{10}})$ and let $k<j$ be the last resonant step. By \eqref{NRiteration},
    \begin{equation*}
        \|B_{j+1}\|_{C^{0}}\leqslant\|B_{k+1}\|_{C^{0}}\prod_{i=k+1}^{j}\big(1+2\varepsilon_{i}^{\frac{1}{2}}\big)\leqslant 2\|B_{k+1}\|_{C^{0}}.
    \end{equation*}
    Moreover, by \eqref{Aincrement} and \eqref{Aresonant},
    \begin{equation*}
        \begin{split}
            |\zeta_{j+1}|&\leqslant\|A_{j+1}-\chi_{k+1}\mathrm{I}\|\\
            &\leqslant\|A_{j+1}-A_{k+1}\|+\|A_{k+1}-\chi_{k+1}\mathrm{I}\|\\
            &\leqslant 8\|A\|\sum_{i=k+1}^{j}\varepsilon_{i}+10\varepsilon_{k}^{\frac{1}{10}}\\
            &\leqslant 20\varepsilon_{k}^{\frac{1}{10}}.
        \end{split}
    \end{equation*}
    Hence,
    \begin{equation*}
        \|B_{j+1}\|_{C^{0}}^{2}|\zeta_{j+1}|\leqslant 80\exp\big(2C_{\rho}|\log\varepsilon_{k}|^{\frac{1+\rho}{2}}\big)\varepsilon_{k}^{\frac{1}{10}}\leqslant 1\leqslant 8\|A\|.
    \end{equation*}
    This proves the triangularization estimate.
    
    It remains to prove the final reducibility statement. Assume that there are only finitely many resonant steps. Let $j'\geqslant 0$ be such that $A_{j}\in\mathcal{NR}_{\ell}(N_{j},\varepsilon_{j}^{\frac{1}{10}})$ for every $j\geqslant j'$. By \eqref{NRiteration},
    \begin{equation*}
        \sum_{j\geqslant j'}\|\widetilde B_j-\mathrm I\|_{\sigma'}<\infty,
        \qquad
        \sum_{j\geqslant j'}\|A_{j+1}-A_j\|<\infty.
    \end{equation*}
    Let $B=\lim_{j\to \infty} B_{j} \in \mathcal{A}_{\sigma'}(2\mathbb{T},\mathrm{SL}(2,\mathbb{R}))$ and $\bar{A}=\lim_{j\to \infty} A_{j}\in \mathrm{SL}(2,\mathbb{R})$. 
    This proves \hyperref[KAM]{Theorem~\ref*{KAM}}.
\end{proof}

\subsection{Parabolic and elliptic cases}

\begin{corollary}\label{PE}
    Under the assumptions of \hyperref[KAM]{Theorem~\ref*{KAM}}, let $\mathcal{UH}$ denote the set of uniformly hyperbolic cocycles. Then the following holds:
    \begin{enumerate}
        \item \label{item:rational} If $2\varrho(\alpha, Ae^{F})=k\alpha\bmod\mathbb{Z}$ for some $k\in\mathbb{Z}$ and $(\alpha,Ae^{F})\notin \mathcal{UH}$, there exist $B\in\mathcal{A}_{\sigma'}(2\mathbb{T},\mathrm{SL}(2,\mathbb{R}))$ and $\zeta\in\mathbb{R}$ such that
        \begin{equation*}
            B(x+\alpha)^{-1}Ae^{F(x)}B(x)=\pm \begin{pmatrix}
                1&\zeta\\
                0&1
            \end{pmatrix}.
        \end{equation*}

        \item \label{item:diophantine} If $\|2\varrho(\alpha,Ae^{F})-n\alpha\|_{\mathbb{T}}\geqslant \kappa e^{-\tau\ell(n)^{\rho}}$ for every $n\in\mathbb{Z}$, there exist $B\in\mathcal{A}_{\sigma'}(2\mathbb{T},\mathrm{SL}(2,\mathbb{R}))$ and $\phi\in\mathbb{T}$ such that 
        \begin{equation*}
            B(x+\alpha)^{-1}Ae^{F(x)}B(x)=R_{\phi}.
        \end{equation*}
    \end{enumerate}
\end{corollary}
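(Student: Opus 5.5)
The plan for both parts is to show that only finitely many KAM steps in \hyperref[KAM]{Theorem~\ref*{KAM}} are resonant; part~\eqref{item:RE} of that theorem then gives reducibility to a constant $\bar A$, and the type of $\bar A$ is read off from the rotation number. The key tool is the covariance of the fibered rotation number under conjugacies of degree $n$. Write $\varrho=\varrho(\alpha,Ae^{F})$ and $d_j=\deg B_j$. Then
\begin{equation*}
    \varrho(\alpha,A_je^{F_j})=\varrho-\frac{d_j\alpha}{2}\pmod{\mathbb Z}.
\end{equation*}
By \hyperref[amorholder]{Lemma~\ref*{amorholder}}, applied with $C^0$ norm at most the $\mathcal A_{\sigma_j}$ norm, we get
\begin{equation*}
    \|\varrho(\alpha,A_je^{F_j})-\varrho(A_j)\|_{\mathbb T}\leqslant C\varepsilon_j^{1/2}.
\end{equation*}
Here $C$ should be uniform because $\|A_j\|\leqslant 2\|A\|$ by item~\eqref{item:AR}. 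If step $j$ is resonant with label $n_j$, where $0<\ell(n_j)\leqslant N_j$, then
\begin{equation*}
    \|2\varrho(A_j)-n_j\alpha\|_{\mathbb T}<\varepsilon_j^{1/10}.
\end{equation*}
Combining these gives
\begin{equation*}
    \|2\varrho-(d_j+n_j)\alpha\|_{\mathbb T}\leqslant 2\varepsilon_j^{1/10},
\end{equation*}
and $d_{j+1}=d_j+n_j$. Item~\eqref{item:deg} bounds $\ell(d_{j+1})\leqslant 2N_j$.

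For part~\eqref{item:diophantine}, the Diophantine hypothesis at $n=d_{j+1}$ gives
\begin{equation*}
    \kappa e^{-\tau(2N_j)^{\rho}}\leqslant 2\varepsilon_j^{1/10}.
\end{equation*}
Since $N_j\sim |\log\varepsilon_j|\,(j+1)^2$, we have $N_j^\rho=o(|\log\varepsilon_j|)$ because $\varepsilon_j$ decays doubly exponentially. So this inequality fails for all large $j$, and hence only finitely many steps are resonant. Theorem~\ref{KAM}\eqref{item:RE} then gives $B\in\mathcal A_{\sigma'}$ with $B(x+\alpha)^{-1}Ae^{F}B=\bar A$. Here $2\varrho(\bar A)=2\varrho-d\alpha$ is nonzero mod $\mathbb Z$, and not $\pm$ that of a hyperbolic matrix, since by the Diophantine condition at $n=d$ it is not in $\mathbb Z$. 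So $\bar A$ is elliptic, and a constant conjugacy in $\mathrm{SL}(2,\mathbb R)$ brings it to $R_\phi$.

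For part~\eqref{item:rational}, set $2\varrho=k\alpha$. The same argument gives
\begin{equation*}
    \|(k-d_{j+1})\alpha\|_{\mathbb T}\leqslant 2\varepsilon_j^{1/10}.
\end{equation*}
If $k\neq d_{j+1}$, then $\alpha\in\mathrm{DC}$ gives a lower bound $\gamma e^{-\tau\ell(k-d_{j+1})^\rho}$ with $\ell(k-d_{j+1})\leqslant\ell(k)+2N_j$. This is again incompatible for large $j$, so $d_{j+1}=k$ for all large resonant $j$. A second resonance at step $j'>j$ would force $n_{j'}=d_{j'+1}-d_{j'}=0$, contradicting $\ell(n_{j'})>0$. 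Thus there are finitely many resonances and the cocycle is reducible to a constant $\bar A$ with $2\varrho(\bar A)\in\mathbb Z$. Since the cocycle is not uniformly hyperbolic, $\bar A$ is not hyperbolic, so it is parabolic or $\pm I$. Conjugating by a constant matrix in $\mathrm{SL}(2,\mathbb R)$, possibly a rotation by $\pi/2$ to fix the orientation, puts it into the form $\pm\begin{pmatrix}1&\zeta\\0&1\end{pmatrix}$.

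\textbf{Main obstacle.} The delicate point is the bookkeeping of the rotation number mod $\mathbb Z$ versus mod $\tfrac12$, which comes from the $2\mathbb T$-periodic conjugacies and the sign $\chi_j$. We also need uniformity of the constant in \hyperref[amorholder]{Lemma~\ref*{amorholder}} over the compact family $\{A_j\}$. Near parabolic $A_j$, this constant is handled by the $1/2$-Hölder form of the estimate. I would first check the compactness argument for this uniformity.
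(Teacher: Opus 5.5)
Your route matches the paper's. The bookkeeping $\|2\varrho-(\deg B_{j+1})\alpha\|_{\mathbb T}\leqslant 2\varepsilon_j^{1/10}$ together with $\ell(\deg B_{j+1})\leqslant 2N_j$ rules out infinitely many resonances. Theorem~\ref{KAM} then gives a constant $\bar A$, which is classified by its rotation number. Part (2) agrees with the paper essentially line by line.

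Part (1), however, has a missing step. After proving finiteness, you assert $2\varrho(\bar A)\in\mathbb Z$. What you actually know is $2\varrho(\bar A)=(k-d)\alpha \bmod \mathbb Z$ with $d=\deg B$. Your argument only showed $d_{j+1}=k$ at \emph{large resonant} steps $j$, and that says nothing once you know resonances stop. For instance, if no step is resonant at all, then $d=0$, and nothing you wrote forces $k=0$. If $d\neq k$, then $(k-d)\alpha\notin\mathbb Z$, so $\bar A$ is a nontrivial elliptic matrix. Your final deduction, ``not hyperbolic, hence parabolic or $\pm\mathrm I$,'' then breaks down.

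The paper supplies exactly this step. Suppose $k\neq\deg B$. From $\sum_{i\geqslant j}\|A_{i+1}-A_i\|\lesssim\varepsilon_j$ and the $1/2$-H\"older dependence of the rotation number of a constant matrix, one gets $\|2\varrho(A_j)-(k-\deg B)\alpha\|_{\mathbb T}\leqslant C\varepsilon_j^{1/2}<\varepsilon_j^{1/10}$. Moreover $0<\ell(k-\deg B)\leqslant N_j$ for all large $j$. Hence every sufficiently late step would be resonant, contradicting finiteness. So $\deg B=k$ and $2\varrho(\bar A)\in\mathbb Z$, after which your classification applies.

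Alternatively, you can bypass identifying the degree. If $\bar A$ is elliptic with $2\varrho(\alpha,\bar A)\equiv(k-d)\alpha \pmod{\mathbb Z}$, conjugate it to $R_\phi$ by a constant $P\in\mathrm{SL}(2,\mathbb R)$. Then conjugate by $R_{(k-d)x/2}\in\mathcal A_{\sigma'}(2\mathbb T,\mathrm{SL}(2,\mathbb R))$. This turns $R_\phi$ into $R_{\phi-(k-d)\alpha/2}=\pm\mathrm I$, which has the required form with $\zeta=0$.
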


\begin{proof}
    We claim that in both cases, there are only finitely many resonant steps. Otherwise, suppose there are infinitely many resonances. Then there exist arbitrarily large $j$ such that $A_{j}\in \mathcal{RS}_{\ell}(N_{j},\varepsilon_{j}^{\frac{1}{10}})$. Consequently, there exists $n_{j}\in\mathbb{Z}$ satisfying
    \begin{equation*}
        0<\ell(n_{j})\leqslant N_{j},\qquad \|2\varrho(A_{j})-n_{j}\alpha\|_{\mathbb{T}}\leqslant\varepsilon_{j}^{\frac{1}{10}}.
    \end{equation*}
    Since \hyperref[amorholder]{Lemma~\ref*{amorholder}} gives $\|\varrho(\alpha,A_{j}e^{F_{j}})-\varrho(A_{j})\|\leqslant C\|F_{j}\|_{C^{0}}^{\frac{1}{2}}\leqslant C\varepsilon_{j}^{\frac{1}{2}}$, one has
    \begin{equation*}
        \|2\varrho(\alpha, A_{j}e^{F_{j}})-n_{j}\alpha\|_{\mathbb{T}}\leqslant 2\varepsilon_{j}^{\frac{1}{10}}.
    \end{equation*}
    By $\deg B_{j+1}=\deg B_{j}+n_{j}$  and $\varrho(\alpha,A_{j}e^{F_{j}})=\varrho(\alpha,Ae^{F})-\frac{(\operatorname{deg}B_{j})\alpha }{2}\bmod\mathbb{Z}$, 
    \begin{equation}\label{labelapproximation}
        \|2\varrho(\alpha,Ae^{F})-(\operatorname{deg}B_{j+1})\alpha\|_{\mathbb{T}}\leqslant 2\varepsilon_{j}^{\frac{1}{10}}.
    \end{equation}
    The degree estimate in \hyperref[KAM]{Theorem~\ref*{KAM}} gives
    \begin{equation}\label{degreeestimate}
        \ell(\operatorname{deg} B_{j+1}) \leqslant 2N_{j}.
    \end{equation} 

    We first consider \hyperref[item:rational]{Corollary~\ref*{PE}\,\textup{(\ref*{item:rational})}}. If $\operatorname{deg}B_{j+1}\neq k$, one may choose $j$ sufficiently large such that $\ell(k)< N_{j}$. Then, by the subadditivity of $\ell$, \eqref{degreeestimate} and $\alpha\in\mathrm{DC}(\gamma,\tau)$,
    \begin{equation*}
        \begin{split}
            \|2\varrho(\alpha,Ae^{F})-(\operatorname{deg}B_{j+1})\alpha\|_{\mathbb{T}}&=\|(k-\operatorname{deg}B_{j+1})\alpha\|_{\mathbb{T}}\\
            &\geqslant\gamma e^{-\tau(3N_{j})^{\rho}}\gg 2\varepsilon_{j}^{\frac{1}{10}},
        \end{split}
    \end{equation*}
    contradicting \eqref{labelapproximation}. Hence, every sufficiently large resonant step satisfies $\operatorname{deg}B_{j+1}=k$. Choose two consecutive sufficiently large resonant steps $j<j'$. Since all the intermediate steps are non-resonant, one has $\operatorname{deg}B_{j'}=\operatorname{deg}B_{j+1}=k$. However, the $j'$-th resonant step gives
    \begin{equation*}
        \operatorname{deg}B_{j'+1}=\operatorname{deg}B_{j'}+n_{j'}=k+n_{j'}\neq k,
    \end{equation*}
    yielding a contradiction. Thus there are only finitely many resonant steps.
    
    We next consider \hyperref[item:diophantine]{Corollary~\ref*{PE}\,\textup{(\ref*{item:diophantine})}}. By the assumed arithmetic condition and \eqref{degreeestimate}, for sufficiently large $j$,
    \begin{equation*}
        \|2\varrho(\alpha,Ae^{F})-(\operatorname{deg}B_{j+1})\alpha\|_{\mathbb{T}}\geqslant\kappa e^{-\tau(2N_{j})^{\rho}}\gg 2\varepsilon_{j}^{\frac{1}{10}},
    \end{equation*}
    which contradicts \eqref{labelapproximation}. Hence there are only finitely many resonant steps as well.

    By the reducibility conclusion of \hyperref[KAM]{Theorem~\ref*{KAM}}, in both cases there exist $B\in\mathcal{A}_{\sigma'}(2\mathbb{T},\mathrm{SL}(2,\mathbb{R}))$ and $\bar{A}\in\mathrm{SL}(2,\mathbb{R})$ such that
    \begin{equation}\label{constantreduction}
        B(x+\alpha)^{-1}Ae^{F(x)}B(x)=\bar{A}.
    \end{equation}

    We now identify $\bar{A}$. If $2\varrho(\alpha,Ae^{F})=k\alpha \bmod \mathbb{Z}$ for some $k\in \mathbb{Z}$, we have $2\varrho(\bar{A})=(k-\operatorname{deg}B)\alpha\bmod\mathbb{Z}$. We claim that $k=\operatorname{deg}B$. Otherwise, if $k \neq \operatorname{deg}B$, then for all sufficiently large $j$,
    \begin{equation*}
        \begin{split}
            \|2\varrho(A_{j})-(k-\operatorname{deg}B)\alpha\|_{\mathbb{T}}&\leqslant 2|\varrho(A_{j})-\varrho(\bar{A})|\\
            &\leqslant C \bigg(\sum_{i=j}^{\infty}\|A_{i+1}-A_{i}\|\bigg)^{\frac{1}{2}}\leqslant C\varepsilon_{j}^{\frac{1}{2}}\ll \varepsilon_{j}^{\frac{1}{10}},
        \end{split}
    \end{equation*}
    which contradicts the assumption that there are only finitely many resonant steps. Since uniform hyperbolicity is invariant under conjugacy and $(\alpha,Ae^{F})\notin\mathcal{UH}$, the constant matrix $\bar{A}$ is not hyperbolic. Consequently, there exist $P\in\mathrm{SL}(2,\mathbb{R})$ and $\zeta\in\mathbb{R}$ such that
    \begin{equation*}
        P^{-1}\bar{A}P=\pm\begin{pmatrix}
            1&\zeta\\
            0&1
        \end{pmatrix}.
    \end{equation*}
    Replacing $B$ by $BP$ in \eqref{constantreduction} proves \hyperref[item:rational]{Corollary~\ref*{PE}\,\textup{(\ref*{item:rational})}}.

    Finally, under the assumption of \hyperref[item:diophantine]{Corollary~\ref*{PE}\,\textup{(\ref*{item:diophantine})}}, one has
    \begin{equation*}
        \|2\varrho(\bar{A})\|_{\mathbb{T}}=\|2\varrho(\alpha,Ae^{F})-(\operatorname{deg}B)\alpha\|_{\mathbb{T}}\geqslant \kappa e^{-\tau\ell(\operatorname{deg}B)^{\rho}}>0.
    \end{equation*}
    Therefore, $\bar{A}$ is elliptic. Hence there exist $P\in\mathrm{SL}(2,\mathbb{R})$ and $\phi\in\mathbb{T}$ such that
    \begin{equation*}
        P^{-1}\bar{A}P=R_{\phi}.
    \end{equation*}
    Replacing $B$ by $BP$ in \eqref{constantreduction} proves \hyperref[item:diophantine]{Corollary~\ref*{PE}\,\textup{(\ref*{item:diophantine})}}.
\end{proof}

\subsection{Triangularization and growth of cocycles}
\begin{corollary}\label{NUHKAM}
    Under the assumptions of \hyperref[KAM]{Theorem~\ref*{KAM}}, suppose  $(\alpha,Ae^{F})\notin\mathcal{UH}$. Then, for every $j\geqslant 0$, there exist $\widehat{B}_{j}\in\mathcal{A}_{\sigma_{j}}(2\mathbb{T},\mathrm{SL}(2,\mathbb{C}))$, $\widehat{A}_{j}\in\mathrm{SL}(2,\mathbb{C})$, and $\widehat{F}_{j}\in\mathcal{A}_{\sigma_{j}}(\mathbb{T},\mathrm{sl}(2,\mathbb{C}))$ such that
    \begin{equation}\label{nuh}
        \widehat{B}_{j}(x+\alpha)^{-1}Ae^{F(x)}\widehat{B}_{j}(x)=\widehat{A}_{j}e^{\widehat{F}_{j}(x)},
    \end{equation}
    where $\widehat{A}_{j}=
        \begin{pmatrix}
            e^{2\pi i\widehat{\varrho}_{j}}&\widehat{\zeta}_{j}\\
            0&e^{-2\pi i\widehat{\varrho}_{j}}
        \end{pmatrix}$ with $\widehat{\varrho}_{j}\in\mathbb{R}$ and  $\widehat{\zeta}_{j}\in\mathbb{C}$.
    Moreover,
    \begin{equation*}
        \begin{split}
        &\|\widehat{B}_{j}\|_{C^{0}}\leqslant\exp\big(C_{\rho}|\log\varepsilon_{j-1}|^{\frac{1+\rho}{2}}\big),\qquad \|\widehat{F}_{j}\|_{\sigma_{j}}\leqslant 4\varepsilon_{j}^{\frac{1}{4}},\\
            &\|\widehat{A}_{j}\|\leqslant 4\|A\|,
        \qquad \|\widehat{B}_{j}\|_{C^{0}}^{2}\, |\widehat{\zeta}_{j}|\leqslant 8\|A\|.
        \end{split}
    \end{equation*}
\end{corollary}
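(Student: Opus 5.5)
The plan is to start from the output of \hyperref[KAM]{Theorem~\ref*{KAM}} and conjugate by the unitary matrices $U_{j}$ given there. That theorem supplies $B_{j}$, $A_{j}$, $F_{j}$ and $U_{j}\in\mathrm{SU}(2)$ with
\begin{equation*}
U_{j}^{-1}A_{j}U_{j}=\chi_{j}\begin{pmatrix}e^{2\pi i\varrho_{j}}&\zeta_{j}\\0&e^{-2\pi i\varrho_{j}}\end{pmatrix},\qquad \|B_{j}\|_{C^{0}}^{2}|\zeta_{j}|\leqslant 8\|A\|.
\end{equation*}
I would set
\begin{equation*}
\widehat{B}_{j}=B_{j}U_{j},\qquad \widehat{F}_{j}=U_{j}^{-1}F_{j}U_{j}.
\end{equation*}
Since $U_{j}$ is a constant unitary matrix, we get $\|\widehat{B}_{j}\|_{C^{0}}=\|B_{j}\|_{C^{0}}$. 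Conjugation by a fixed unitary preserves the operator norm of each Fourier coefficient, so $\|\widehat{F}_{j}\|_{\sigma_{j}}=\|F_{j}\|_{\sigma_{j}}\leqslant\varepsilon_{j}$. The conjugation identity \eqref{nuh} then holds with $\widehat{A}_{j}=U_{j}^{-1}A_{j}U_{j}$, which is upper triangular.

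The sign $\chi_{j}$ should be absorbed into the diagonal entries. When $\chi_{j}=-1$, I would replace $\widehat{\varrho}_{j}=\varrho_{j}+\tfrac12$ and $\widehat{\zeta}_{j}=\chi_{j}\zeta_{j}$. This preserves $|\widehat{\zeta}_{j}|$, and hence the bound $\|\widehat{B}_{j}\|_{C^{0}}^{2}|\widehat{\zeta}_{j}|\leqslant 8\|A\|$. The norm bound $\|\widehat{A}_{j}\|=\|A_{j}\|\leqslant 2\|A\|\leqslant 4\|A\|$ is also immediate.

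The real content is the requirement $\widehat{\varrho}_{j}\in\mathbb{R}$, i.e.\ that $A_{j}$ is not hyperbolic. This is where the hypothesis $(\alpha,Ae^{F})\notin\mathcal{UH}$ enters, and it is also why the statement allows the weaker bounds $4\varepsilon_{j}^{1/4}$ and $4\|A\|$. Suppose $A_{j}$ is hyperbolic with $\varrho_{j}=it$, $t\neq0$. Then the diagonal entries are $e^{\mp 2\pi t}$, and I plan to move the hyperbolicity into the perturbation. Two cases arise.

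\textbf{Case $|t|$ tiny.} If $|2\pi t|\leqslant \varepsilon_{j}^{1/4}$, write
\begin{equation*}
\widehat{A}'_{j}=\begin{pmatrix}1&\zeta'_{j}\\0&1\end{pmatrix},\qquad \widehat{A}_{j}=\widehat{A}'_{j}e^{D},
\end{equation*}
with $D$ diagonal and $\|D\|\lesssim|t|$. Then $\widehat{A}_{j}e^{\widehat{F}_{j}}=\widehat{A}'_{j}e^{D}e^{\widehat{F}_{j}}$. The BCH formula gives a new perturbation of norm at most $\|D\|+2\varepsilon_{j}\leqslant 4\varepsilon_{j}^{1/4}$, and it is still one-periodic and in $\mathcal{A}_{\sigma_{j}}$. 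The new off-diagonal entry satisfies $|\zeta'_{j}|\leqslant|\zeta_{j}|(1+O(|t|))$, which keeps the $\zeta$ bound up to constants. I would handle these constants by taking $\varepsilon_{0}$ small, or else by appealing to the slack already present in \hyperref[KAM]{Theorem~\ref*{KAM}}.

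\textbf{Case $|t|$ large.} If $|2\pi t|>\varepsilon_{j}^{1/4}$, I must derive a contradiction with non-uniform hyperbolicity. Since $\|F_{j}\|\leqslant\varepsilon_{j}\ll|t|^{2}$, and the relevant quantity is the eigenvalue gap relative to $|\zeta_{j}|$ and $\|A_{j}\|\leqslant 2\|A\|$, a constant hyperbolic matrix perturbed by $\varepsilon_{j}$ remains uniformly hyperbolic. Concretely, I would use an invariant cone field argument for $A_{j}e^{F_{j}}$, with cone aperture adapted to the conjugacy that diagonalizes $A_{j}$. Its size is of order $\|A\|/|t|$, so the cone is preserved when $\varepsilon_{j}\|A\|^{2}/|t|^{2}\ll1$, which holds because $\varepsilon_{j}\ll\varepsilon_{j}^{1/2}$. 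Uniform hyperbolicity is invariant under conjugation by the continuous $B_{j}$ (a double cover is harmless), so $(\alpha,Ae^{F})$ would be uniformly hyperbolic, a contradiction.

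I expect the main obstacle to be this quantitative cone-field persistence step, in particular making the constants uniform in $j$ given that $\|A_{j}\|\leqslant 2\|A\|$. Once both cases are settled, all the listed estimates follow directly from \hyperref[KAM]{Theorem~\ref*{KAM}}\,(\ref{item:AR}) and (\ref{item:UT}).
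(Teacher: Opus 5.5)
Your proposal is correct and follows essentially the paper's proof: conjugate by $U_j$, absorb $\chi_j$ through $\widehat{\varrho}_j=\varrho_j+\frac{1-\chi_j}{4}$, and in the hyperbolic case rule out $\lambda_j\geqslant\varepsilon_j^{1/4}$ by diagonalizing with a matrix of norm $\lesssim\|A\|\varepsilon_j^{-1/4}$ and invoking cone-field persistence of uniform hyperbolicity, then absorb $\operatorname{diag}(e^{-\lambda_j},e^{\lambda_j})$ into $\widehat{F}_j$ by BCH when $\lambda_j<\varepsilon_j^{1/4}$. Two small refinements. First, ordering the eigenvalues so that $e^{-\lambda_j}$ sits in the upper-left corner gives $|\widehat{\zeta}_j|=e^{-\lambda_j}|\zeta_j|\leqslant|\zeta_j|$ directly, without appealing to slack; this reordering is free because $|\zeta_j|$ does not depend on the Schur ordering. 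Second, the cone condition must compare the diagonal-frame perturbation, of size $\sim\|A\|^2\varepsilon_j\lambda_j^{-2}$, with the hyperbolic rate $\lambda_j$ rather than with $1$; this still holds at your threshold, since $\varepsilon_j\lambda_j^{-3}\leqslant\varepsilon_j^{1/4}$.
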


\begin{proof}
    By the almost reducibility and triangularization estimates in \hyperref[KAM]{Theorem~\ref*{KAM}}, there exist $U_{j}\in\mathrm{SU}(2)$, $\chi_{j}\in\{\pm1\}$, $\varrho_{j}\in i\mathbb{R}\cup\mathbb{R}$, and $\zeta_{j}\in\mathbb{C}$ such that
    \begin{equation*}
        T_{j}\coloneqq U_{j}^{-1}A_{j}U_{j}
        =
        \chi_{j}
        \begin{pmatrix}
            e^{2\pi i\varrho_{j}}&\zeta_{j}\\
            0&e^{-2\pi i\varrho_{j}}
        \end{pmatrix}.
    \end{equation*}
    Let $\widehat{B}_{j}=B_{j}U_{j}$ and $G_{j}=U_{j}^{-1}F_{j}U_{j}$.
    Then
    \begin{equation*}
        \widehat{B}_{j}(x+\alpha)^{-1}Ae^{F(x)}\widehat{B}_{j}(x)=T_{j}e^{G_{j}(x)},
    \end{equation*}
    with
    \begin{equation}\label{BGest}
        \begin{split}
            &\|\widehat{B}_{j}\|_{C^{0}}=\|B_{j}\|_{C^{0}}\leqslant \exp\big(C_{\rho}|\log\varepsilon_{j-1}|^{\frac{1+\rho}{2}}\big),\\
            &\|G_{j}\|_{\sigma_{j}}=\|F_{j}\|_{\sigma_{j}}\leqslant\varepsilon_{j}.
        \end{split}
    \end{equation}

    If $\varrho_{j}\in\mathbb{R}$, we set
    \begin{equation*}
        \widehat{A}_{j}=T_{j},
        \qquad
        \widehat{F}_{j}=G_{j},
        \qquad
        \widehat{\varrho}_{j}=\varrho_{j}+\frac{1-\chi_{j}}{4},
        \qquad
        \widehat{\zeta}_{j}=\chi_{j}\zeta_{j}.
    \end{equation*}
    The desired estimates follow immediately from \eqref{BGest} and the triangularization estimate in \hyperref[KAM]{Theorem~\ref*{KAM}}.

    It remains to consider the case $\rho_{j}\in i\mathbb{R}\setminus\{0\}$. After interchanging the two eigenvalues if necessary, we may write $T_{j}=
        \chi_{j}\begin{pmatrix}
            e^{-\lambda_{j}}&\zeta_{j}\\
            0&e^{\lambda_{j}}
        \end{pmatrix}$ where $\lambda_{j}>0$. 

    We first show that $\lambda_{j}<\varepsilon_{j}^{\frac{1}{4}}$. Suppose otherwise that $\lambda_{j}\geqslant\varepsilon_{j}^{\frac{1}{4}}$. Define
    \begin{equation*}
        Q_{j}=
        \begin{pmatrix}
            1&\frac{\zeta_{j}}{e^{\lambda_{j}}-e^{-\lambda_{j}}}\\
            0&1
        \end{pmatrix}\in\mathrm{SL}(2,\mathbb{C}).
    \end{equation*}
    Since $|\zeta_{j}|\leqslant\|A_{j}\|\leqslant2\|A\|$, we have $\|Q_{j}\|\leqslant 1+4\|A\|\varepsilon_{j}^{-1/4}$. Then
    \begin{equation*}
        Q_{j}^{-1}T_{j}e^{G_{j}(x)}Q_{j}=
        \chi_{j}\begin{pmatrix}
            e^{-\lambda_{j}}&0\\
            0&e^{\lambda_{j}}
        \end{pmatrix}e^{Q_{j}^{-1} G_{j}(x)Q_{j}}\eqqcolon \chi_{j} D_{j}e^{H_{j}(x)},
    \end{equation*}
    where $\|H_{j}\|_{\sigma_{j}}\leqslant 20\|A\|^{2}\varepsilon_{j}^{\frac{1}{2}}$.
    Therefore, 
    \begin{equation}\label{cone}
        \frac{1+e^{-\lambda_{j}}}{1-e^{-\lambda_{j}}}\|e^{H_{j}}-\mathrm{I}\|_{C^{0}}
            \leqslant 100\|A\|^{2}\varepsilon_{j}^{\frac{1}{4}}<1.
    \end{equation}
    Since $(\alpha,\chi_{j}D_{j})\in\mathcal{UH}$  with exponent $\lambda_{j}$, by \cite[Theorem 5.2]{MR4348670} and \eqref{cone}, one has $(\alpha,\chi_{j}D_{j}e^{H_{j}})\in\mathcal{UH}$. Consequently, $(\alpha,Ae^{F})\in\mathcal{UH}$, which contradicts the assumption. Thus $\lambda_{j}<\varepsilon_{j}^{\frac{1}{4}}$.

    We now absorb $\lambda_{j}$ into the perturbation. Let
    \begin{equation*}
        D_{j}=
        \begin{pmatrix}
            e^{-\lambda_{j}}&0\\
            0&e^{\lambda_{j}}
        \end{pmatrix},
        \qquad
        \widehat{\varrho}_{j}=\frac{1-\chi_{j}}{4},
        \qquad
        \widehat{\zeta}_{j}=\chi_{j}e^{-\lambda_{j}}\zeta_{j},
    \end{equation*}
    and define
    \begin{equation*}
        \widehat{A}_{j}=
        \begin{pmatrix}
            e^{2\pi i\widehat{\varrho}_{j}}&\widehat{\zeta}_{j}\\
            0&e^{-2\pi i\widehat{\varrho}_{j}}
        \end{pmatrix}
        =
        \chi_{j}
        \begin{pmatrix}
            1&e^{-\lambda_{j}}\zeta_{j}\\
            0&1
        \end{pmatrix}.
    \end{equation*}
    Then one can check that $T_{j}=\widehat{A}_{j}D_{j}$. Define $\widehat{F}_{j}=\log (D_{j}e^{G_{j}(x)})$.
    Combine $\lambda_{j}<\varepsilon_{j}^{\frac{1}{4}}$ with the BCH formula and \eqref{BGest}, one has
    \begin{equation*}
        \|\widehat{F}_{j}\|_{\sigma_{j}}
        \leqslant 2\big(\lambda_{j}+\|G_{j}\|_{\sigma_{j}}\big)
        \leqslant 4\varepsilon_{j}^{\frac{1}{4}}.
    \end{equation*}
    Thus \eqref{nuh} holds. Moreover, 
    \begin{equation*}
        \|\widehat{A}_{j}\|\leqslant 2\|T_{j}\|\leqslant 4\|A\|,\quad \|\widehat{B}_{j}\|_{C^{0}}^{2}|\widehat{\zeta}_{j}|\leqslant\|B_{j}\|_{C^{0}}^{2}|\zeta_{j}|
        \leqslant8\|A\|.
    \end{equation*}
    This finishes the proof.
\end{proof}

Recall the following product estimate.
    \begin{lemma}[\cite{MR2846380}]\label{prod}
        We have
        \begin{equation*}
            M_{s} (\mathrm{I}+\xi_{s})\cdots M_{0}(\mathrm{I}+\xi_{0})=M^{(s)} (\mathrm{I}+\xi^{(s)}),
        \end{equation*}
        where $M^{(s)}=M_{s}\cdots M_{0}$, and 
        \begin{equation*}
            \|\xi^{(s)}\|\leqslant \exp\bigg(\sum_{k=0}^{s}\|M^{(k)}\|^{2}\|\xi_{k}\|\bigg)-1.
        \end{equation*}
    \end{lemma}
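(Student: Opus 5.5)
This is the product estimate of \cite{MR2846380}: for matrices $M_k$ and perturbations $\xi_k$,
\[
M_s(\mathrm{I}+\xi_s)\cdots M_0(\mathrm{I}+\xi_0)=M^{(s)}(\mathrm{I}+\xi^{(s)}),\qquad
\|\xi^{(s)}\|\leqslant \exp\Big(\sum_{k=0}^{s}\|M^{(k)}\|^{2}\|\xi_k\|\Big)-1 .
\]
The plan is to move each perturbation to the right past the partial products, expand the resulting product, and bound it term by term. I assume the $M_k$ lie in $\mathrm{SL}(2,\mathbb{C})$, as in all applications here, so that $\|(M^{(k)})^{-1}\|=\|M^{(k)}\|$ (Definition~\ref*{def:adapted-algebra}).

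\textbf{Commuting the perturbations.} Set $M^{(k)}=M_k\cdots M_0$ and
\[
\eta_k=(M^{(k)})^{-1}\xi_k M^{(k)} .
\]
Then $M_k(\mathrm{I}+\xi_k)M^{(k-1)}=M^{(k)}(\mathrm{I}+\eta_k)$, because $M_k\xi_k M^{(k-1)}=M^{(k)}\eta_k$. Arguing by induction on $s$, suppose
\[
M_{s-1}(\mathrm{I}+\xi_{s-1})\cdots M_0(\mathrm{I}+\xi_0)=M^{(s-1)}(\mathrm{I}+\xi^{(s-1)}).
\]
Multiplying on the left by $M_s(\mathrm{I}+\xi_s)$ and applying the identity with $k=s$ gives $M^{(s)}(\mathrm{I}+\eta_s)(\mathrm{I}+\xi^{(s-1)})$. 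Unrolling the induction,
\[
\mathrm{I}+\xi^{(s)}=(\mathrm{I}+\eta_s)(\mathrm{I}+\eta_{s-1})\cdots(\mathrm{I}+\eta_0).
\]

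\textbf{Bounding the product.} Each factor satisfies
\[
\|\eta_k\|\leqslant \|(M^{(k)})^{-1}\|\,\|\xi_k\|\,\|M^{(k)}\|=\|M^{(k)}\|^{2}\|\xi_k\| .
\]
Expand the product and use submultiplicativity on each term. This gives
\[
\|\xi^{(s)}\|\leqslant \prod_{k=0}^{s}(1+\|\eta_k\|)-1\leqslant \exp\Big(\sum_{k=0}^{s}\|\eta_k\|\Big)-1,
\]
using $1+t\leqslant e^{t}$. Substituting the bound on $\|\eta_k\|$ yields the claimed estimate.

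There is no real obstacle in this argument. The only points needing care are the order of the matrices in the conjugation and the identity $\|P^{-1}\|=\|P\|$ for $P\in\mathrm{SL}(2,\mathbb{C})$. If the $M_k$ are merely invertible, the same proof gives the estimate with $\|M^{(k)}\|\,\|(M^{(k)})^{-1}\|$ in place of $\|M^{(k)}\|^{2}$.
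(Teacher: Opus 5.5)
The step that fails is your identity $M_k\xi_kM^{(k-1)}=M^{(k)}\eta_k$ with $\eta_k=(M^{(k)})^{-1}\xi_kM^{(k)}$. Its right-hand side equals $\xi_kM^{(k)}=\xi_kM_kM^{(k-1)}$, which is not $M_k\xi_kM^{(k-1)}$ unless $\xi_k$ commutes with $M_k$. Each perturbation sits to the \emph{right} of $M_k$, so the correct identity is
\[
M_k(\mathrm{I}+\xi_k)M^{(k-1)}=M^{(k)}\bigl(\mathrm{I}+(M^{(k-1)})^{-1}\xi_kM^{(k-1)}\bigr),\qquad M^{(-1)}=\mathrm{I}.
\]
With this conjugator, your telescoping and your expansion of $\prod_k(\mathrm{I}+\eta_k)$ (both of which are fine) yield
\[
\|\xi^{(s)}\|\leqslant\exp\Bigl(\sum_{k=0}^{s}\|M^{(k-1)}\|^{2}\|\xi_k\|\Bigr)-1 .
\]
The weight is $\|M^{(k-1)}\|^{2}$, not $\|M^{(k)}\|^{2}$. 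Your $\eta_k$ is the right one only for the left-perturbed product $(\mathrm{I}+\xi_s)M_s\cdots(\mathrm{I}+\xi_0)M_0$.

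This cannot be patched while keeping the weight $\|M^{(k)}\|^{2}$. With right perturbations, the displayed inequality is false for general $M_k\in\mathrm{SL}(2,\mathbb{R})$. Take $s=1$, $M_0=\operatorname{diag}(\lambda,\lambda^{-1})$, $M_1=M_0^{-1}$, $\xi_0=0$, and $\xi_1=\bigl(\begin{smallmatrix}0&0\\ \epsilon&0\end{smallmatrix}\bigr)$. Then $M^{(1)}=\mathrm{I}$, and $\xi^{(1)}=M_0^{-1}\xi_1M_0$ has norm $\lambda^{2}\epsilon$. The claimed bound is only $e^{\epsilon}-1$, which fails as soon as $\lambda\geqslant 2$ and $0<\epsilon\leqslant 1$.

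The paper gives no proof; it only cites the lemma. In its uses the slip is harmless. In Corollary~\ref{growthA} one has $M_k\equiv\widehat{A}_j$, so the correct weights are $\|\widehat{A}_j^{k}\|^{2}$ for $0\leqslant k\leqslant s-1$, and the displayed estimate there already sums $\|\widehat{A}_j^{k}\|^{2}$ over $0\leqslant k\leqslant s$. In Lemma~\ref{rsgrowth} all relevant powers of $A_{j+1}$ have norm at most $2$ anyway.

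So what you should state and prove is the shifted estimate above. Your argument gives it once the conjugator $M^{(k)}$ is replaced by $M^{(k-1)}$. The same shift is needed in your closing remark about merely invertible $M_k$.
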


\begin{corollary}\label{growthA}
    Under the assumptions of \hyperref[NUHKAM]{Corollary~\ref*{NUHKAM}}, we have
    \begin{equation*}
        \|\mathcal{A}_{s}\|_{C^{0}}\leqslant C(s+1),
    \end{equation*}
    where $(s\alpha, \mathcal{A}_{s})\coloneqq (\alpha, Ae^{F})^{s}$.
\end{corollary}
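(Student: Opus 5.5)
The plan is the standard argument of Avila–Jitomirskaya type: for each $s$, choose a KAM scale $j$ adapted to $s$ and use the triangular normal form of \hyperref[NUHKAM]{Corollary~\ref*{NUHKAM}} together with the product estimate of \hyperref[prod]{Lemma~\ref*{prod}}. Conjugating by $\widehat{B}_{j}$ gives
\begin{equation*}
\mathcal{A}_{s}(x)=\widehat{B}_{j}(x+s\alpha)\prod_{k=s-1}^{0}\widehat{A}_{j}e^{\widehat{F}_{j}(x+k\alpha)}\,\widehat{B}_{j}(x)^{-1},
\end{equation*}
so that
\begin{equation*}
\|\mathcal{A}_{s}\|_{C^{0}}\leqslant\|\widehat{B}_{j}\|_{C^{0}}^{2}\,\Big\|\prod_{k}\widehat{A}_{j}e^{\widehat{F}_{j}(x+k\alpha)}\Big\|.
\end{equation*}
Since $\widehat{A}_{j}$ is upper triangular with unimodular diagonal, $\widehat{A}_{j}^{k}$ has diagonal entries of modulus one and off-diagonal entry bounded by $k|\widehat{\zeta}_{j}|$. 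Hence $\|\widehat{A}_{j}^{k}\|\leqslant 1+k|\widehat{\zeta}_{j}|$.

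We write $e^{\widehat{F}_{j}}=\mathrm{I}+\xi_{k}$ with $\|\xi_{k}\|\leqslant 8\varepsilon_{j}^{1/4}$ and apply \hyperref[prod]{Lemma~\ref*{prod}} with $M_{k}=\widehat{A}_{j}$. The error term is then controlled by
\begin{equation*}
\exp\Big(\sum_{k\leqslant s}(1+k|\widehat{\zeta}_{j}|)^{2}\,8\varepsilon_{j}^{1/4}\Big)-1\lesssim\exp\big(C s(1+s|\widehat{\zeta}_{j}|)^{2}\varepsilon_{j}^{1/4}\big)-1.
\end{equation*}
This yields
\begin{equation*}
\|\mathcal{A}_{s}\|\leqslant\|\widehat{B}_{j}\|^{2}(1+s|\widehat{\zeta}_{j}|)\big(1+O(1)\big)
\end{equation*}
whenever $s(1+s|\widehat{\zeta}_{j}|)^{2}\varepsilon_{j}^{1/4}\leqslant 1$. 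Using $\|\widehat{B}_{j}\|^{2}|\widehat{\zeta}_{j}|\leqslant 8\|A\|$, we get
\begin{equation*}
\|\mathcal{A}_{s}\|\leqslant C\big(\|\widehat{B}_{j}\|^{2}+8\|A\|s\big).
\end{equation*}
It therefore suffices to choose $j$ with $\|\widehat{B}_{j}\|_{C^{0}}^{2}\lesssim s+1$ while keeping the product error bounded.

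The key step, and the main obstacle, is the choice of $j=j(s)$. For small $s$ we take $j=0$, where $\widehat{B}_{0}$ is bounded. In general we pick $j$ maximal such that $\varepsilon_{j-1}^{-1/100}\leqslant s$ roughly; more precisely, $j$ is chosen so that $s\in[\varepsilon_{j-1}^{-c},\varepsilon_{j}^{-c}]$ for a small $c$, say $c=1/40$. Then $\|\widehat{B}_{j}\|^{2}\leqslant\exp(2C_{\rho}|\log\varepsilon_{j-1}|^{(1+\rho)/2})$, which is subpolynomial in $\varepsilon_{j-1}^{-1}$ because $(1+\rho)/2<1$. Hence $\|\widehat{B}_{j}\|^{2}\leqslant\varepsilon_{j-1}^{-c}\leqslant s$ once $\varepsilon_{0}$ is small.

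For the error condition we use the crude bound $|\widehat{\zeta}_{j}|\leqslant 4\|A\|$, which gives $s(1+s|\widehat{\zeta}_{j}|)^{2}\varepsilon_{j}^{1/4}\lesssim s^{3}\varepsilon_{j}^{1/4}\leqslant\varepsilon_{j}^{1/4-3c}\leqslant 1$. Because $\varepsilon_{j+1}=4\varepsilon_{j}^{3}$, the intervals $[\varepsilon_{j-1}^{-c},\varepsilon_{j}^{-c}]$ cover all large $s$, so every $s$ is handled. The delicate point is checking that the superexponential scale growth ($|\log\varepsilon_{j}|\approx3|\log\varepsilon_{j-1}|$) is compatible with both requirements simultaneously. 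It is, since both constraints are polynomial in $\varepsilon_{j-1}$ versus $\varepsilon_{j}$, and the sub-polynomial growth of $\|B_{j}\|$ from \hyperref[KAM]{Theorem~\ref*{KAM}} is exactly what makes this work. Collecting the estimates gives $\|\mathcal{A}_{s}\|_{C^{0}}\leqslant C(s+1)$ with $C$ depending on $\|A\|$ and $\varepsilon_{0}$.
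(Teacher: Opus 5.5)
Your proposal is correct and follows essentially the same route as the paper: conjugate by $\widehat{B}_{j}$, apply the product estimate (Lemma~\ref{prod}) with $\|\widehat{A}_{j}^{k}\|\leqslant 1+k|\widehat{\zeta}_{j}|$, use $\|\widehat{B}_{j}\|_{C^{0}}^{2}|\widehat{\zeta}_{j}|\leqslant 8\|A\|$, and pick the scale $j$ from a family of intervals in $s$ that cover all large $s$. The only difference is cosmetic. The paper takes $I_{j}=(\varepsilon_{j}^{-1/100},\varepsilon_{j}^{-1/20})$ and treats only large $s$, whereas your tiling $[\varepsilon_{j-1}^{-1/40},\varepsilon_{j}^{-1/40}]$, starting from $\varepsilon_{-1}=1$, also covers small $s$ directly through $j=0$.
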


\begin{proof}

    By \hyperref[NUHKAM]{Corollary~\ref*{NUHKAM}}, we can express the iterated cocycle as
    \begin{equation*}
        \mathcal{A}_{s}(x)=\widehat{B}_{j}(x+s\alpha)\bigg(\prod_{k=s-1}^{0}\widehat{A}_{j}e^{\widehat{F}_{j}(x+k\alpha)}\bigg)\widehat{B}_{j}(x)^{-1}.
    \end{equation*}
    Using \hyperref[prod]{Lemma~\ref*{prod}} and $\|\widehat{A}_{j}^{k}\|\leqslant 1+k|\widehat{\zeta}_{j}|$ for all $k\geqslant 0$, we deduce
    \begin{equation*}
        \begin{split}
            \|\mathcal{A}_{s}\|_{C^{0}}&\leqslant \|\widehat{B}_{j}\|_{C^{0}}^{2}\|\widehat{A}_{j}^{s}\| \exp\bigg(\|\widehat{F}_{j}\|_{C^{0}}\sum_{k=0}^{s} \|\widehat{A}_{j}^{k}\|^{2}\bigg)\\
            &\leqslant \|\widehat{B}_{j}\|_{C^{0}}^{2} (1+s|\widehat{\zeta}_{j}|) \exp\bigg(4\varepsilon_{j}^{\frac{1}{4}}\sum_{k=0}^{s}(1+k|\widehat{\zeta}_{j}|)^{2}\bigg).
        \end{split}
    \end{equation*}
    For each $j\geqslant 0$, define the interval
    \begin{equation*}
        I_{j}=\Big(\varepsilon_{j}^{-1/100}, \varepsilon_{j}^{-1/20}\Big).
    \end{equation*}
    Since $\varepsilon_{j+1}=4\varepsilon_{j}^{3}$, we have $\varepsilon_{j+1}^{-1/100}<\varepsilon_{j}^{-1/20}$, which implies that $I_{j}\cap I_{j+1}\neq \emptyset$. Therefore, the intervals $\{I_{j}\}_{j\geqslant 0}$ cover all sufficiently large integers $s$.

    Fix any $s\in I_{j}$. Using the bounds $\|\widehat{B}_{j}\|_{C^{0}}^{2}\, |\widehat{\zeta}_{j}|\leqslant 8\|A\|$ and $\|\widehat{B}_{j}\|_{C^{0}}^{2}\leqslant \exp\big(C_{\rho}|\log \varepsilon_{j-1}|^{\frac{1+\rho}{2}}\big)\leqslant s$, we obtain
    \begin{equation*}
        \|\mathcal{A}_{s}\|_{C^{0}}\leqslant 2\|\widehat{B}_{j}\|_{C^{0}}^{2} (1+s|\widehat{\zeta}_{j}|) \leqslant 10 \|A\|(s+1).
    \end{equation*}
\end{proof}

\subsection{Lyapunov exponent}
The Lyapunov exponent of the cocycle $(\alpha,A)$ is defined as
\begin{equation*}
    L(\alpha,A)=\lim_{n\to \infty} \frac{1}{n}\int_{\mathbb{T}} \log \|A(x+(n-1)\alpha)\cdots A(x)\|\,\mathrm{d}x.
\end{equation*}

\begin{theorem}\label{zeroLE}
    Under the assumptions of \hyperref[KAM]{Theorem~\ref*{KAM}}, suppose that $(\alpha,Ae^{F})\notin\mathcal{UH}$. Then $L(\alpha,Ae^{F})=0$.
\end{theorem}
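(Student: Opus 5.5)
The plan is to derive the vanishing of the Lyapunov exponent directly from the linear growth bound in Corollary~\ref{growthA}. Since $(\alpha,Ae^{F})\notin\mathcal{UH}$, the hypotheses of Corollary~\ref{NUHKAM} and hence of Corollary~\ref{growthA} are satisfied. This gives a constant $C$, depending on $\|A\|$ and the KAM data, such that for every sufficiently large $s$ the iterate $\mathcal{A}_{s}(x)=Ae^{F(x+(s-1)\alpha)}\cdots Ae^{F(x)}$ obeys $\|\mathcal{A}_{s}\|_{C^{0}}\leqslant C(s+1)$.

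For the finitely many small $s$ not covered by the intervals $I_{j}$, we use the trivial bound $\|\mathcal{A}_{s}\|_{C^{0}}\leqslant(\|A\|e^{\|F\|_{C^0}})^{s}$. Note that $\|F\|_{C^{0}}\leqslant\|F\|_{\sigma}$, since the $\ell^{1}$ Fourier norm dominates the sup norm. These finitely many values are irrelevant for the limit anyway.

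Next we insert this bound into the definition of $L$. Since $\log\|\mathcal{A}_{s}(x)\|\geqslant 0$ for $\mathrm{SL}(2,\mathbb{R})$ matrices, we obtain, for all large $s$,
\begin{equation*}
0\leqslant \frac{1}{s}\int_{\mathbb{T}}\log\|\mathcal{A}_{s}(x)\|\,\mathrm{d}x\leqslant \frac{\log C+\log(s+1)}{s}.
\end{equation*}
Letting $s\to\infty$, which is permitted because the limit defining $L$ exists by subadditivity (Kingman/Fekete), gives $L(\alpha,Ae^{F})=0$.

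The only point needing care is that Corollary~\ref{growthA} genuinely applies to all large $s$. This requires two things. First, the intervals $I_{j}$ must overlap and cover a tail of the integers, which was checked in its proof using $\varepsilon_{j+1}=4\varepsilon_{j}^{3}$. Second, the inequality $\|\widehat{B}_{j}\|_{C^{0}}^{2}\leqslant\exp\bigl(2C_{\rho}|\log\varepsilon_{j-1}|^{\frac{1+\rho}{2}}\bigr)\leqslant\varepsilon_{j}^{-1/100}<s$ must hold. It does, because $\frac{1+\rho}{2}<1$ makes the left side subpolynomial in $\varepsilon_{j}^{-1}$, given that $\varepsilon_{0}$ is small enough. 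A subexponential bound would in fact suffice, so no delicate constant tracking is needed here; the main work was already done in Corollaries~\ref{NUHKAM} and~\ref{growthA}.
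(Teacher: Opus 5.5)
Your proposal is correct and follows the paper's proof: for $\mathrm{SL}(2,\mathbb{R})$-valued cocycles one has $0\leqslant L(\alpha,Ae^{F})\leqslant \frac{1}{s}\log\|\mathcal{A}_{s}\|_{C^{0}}$, and the linear bound of Corollary~\ref{growthA} sends this to zero. Your extra checks (the covering of large $s$ by the intervals $I_{j}$, and the bound $\|\widehat{B}_{j}\|_{C^{0}}^{2}\leqslant\exp\bigl(2C_{\rho}|\log\varepsilon_{j-1}|^{\frac{1+\rho}{2}}\bigr)$ with the correct factor $2C_{\rho}$) are consistent with the argument behind that corollary.
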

\begin{proof}
    Since $\det Ae^{F}\equiv 1$.  \hyperref[growthA]{Corollary~\ref*{growthA}} gives that 
    \begin{equation*}
        0\leqslant L(\alpha,Ae^{F}) \leqslant \frac{1}{s}\log \|\mathcal{A}_{s}\|_{C^{0}}\to 0.
    \end{equation*}
    This finishes the proof.
\end{proof}

\begin{theorem}\label{LEholder}
    Under the assumptions of \hyperref[KAM]{Theorem~\ref*{KAM}}, denote $\mathcal{A}(x)=Ae^{F(x)}$. Then there exists $C>0$ such that, for every $\widetilde{\mathcal{A}}\in C^{0}(\mathbb{T},\mathrm{SL}(2,\mathbb{C}))$,
    \begin{equation*}
        |L(\alpha,\mathcal{A})-L(\alpha,\widetilde{\mathcal{A}})|\leqslant C\|\mathcal{A}-\widetilde{\mathcal{A}}\|_{C^{0}}^{\frac{1}{2}}.
    \end{equation*}
\end{theorem}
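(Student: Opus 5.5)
We prove Theorem~\ref{LEholder} by the standard scheme that converts polynomial growth of the iterates, valid at a sequence of scales, into H\"older continuity of the Lyapunov exponent. There are two cases. If $(\alpha,\mathcal{A})\in\mathcal{UH}$, then $L$ is locally Lipschitz (indeed analytic) in a $C^{0}$ neighbourhood of $\mathcal{A}$. Outside that neighbourhood $L$ is bounded on the set where $\|\mathcal{A}-\widetilde{\mathcal{A}}\|_{C^0}$ is bounded below, so the estimate holds after enlarging $C$. We therefore concentrate on the case $(\alpha,\mathcal{A})\notin\mathcal{UH}$. Here Theorem~\ref{zeroLE} gives $L(\alpha,\mathcal{A})=0$, and it suffices to show $L(\alpha,\widetilde{\mathcal{A}})\leqslant C\delta^{1/2}$ with $\delta=\|\mathcal{A}-\widetilde{\mathcal{A}}\|_{C^{0}}$. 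We may assume $\delta$ is small, since for large $\delta$ the bound is trivial because $L(\alpha,\widetilde{\mathcal{A}})\leqslant\log\|\widetilde{\mathcal{A}}\|_{C^0}$, and we may normalize $\|\widetilde{\mathcal A}\|$ relative to $\delta$.

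The key step is a finite-scale growth bound for the perturbed cocycle. Use the telescoping identity
\begin{equation*}
\widetilde{\mathcal{A}}_{s}(x)-\mathcal{A}_{s}(x)=\sum_{k=0}^{s-1}\widetilde{\mathcal{A}}_{s-k-1}(x+(k+1)\alpha)\big(\widetilde{\mathcal{A}}-\mathcal{A}\big)(x+k\alpha)\mathcal{A}_{k}(x),
\end{equation*}
or equivalently write $\widetilde{\mathcal{A}}=\mathcal{A}(\mathrm{I}+\xi)$ with $\|\xi\|_{C^0}\leqslant C\delta$ and apply Lemma~\ref{prod} with $M_k=\mathcal{A}(x+k\alpha)$. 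With Corollary~\ref{growthA}, $\|\mathcal{A}_k\|_{C^0}\leqslant C(k+1)$, Lemma~\ref{prod} gives
\begin{equation*}
\|\widetilde{\mathcal{A}}_{s}\|_{C^{0}}\leqslant C(s+1)\exp\Big(C\delta\sum_{k=0}^{s}(k+1)^{2}\Big)\leqslant C(s+1)\exp\big(C\delta s^{3}\big).
\end{equation*}
This naive bound gives only H\"older exponent $1/3$. To reach $1/2$, we will not use Corollary~\ref{growthA} blindly. Instead we conjugate by $\widehat{B}_j$ from Corollary~\ref{NUHKAM} and apply Lemma~\ref{prod} to the product of $\widehat A_j e^{\widehat F_j}$ perturbed by $\widehat{B}_j^{-1}(\cdot+\alpha)(\widetilde{\mathcal{A}}-\mathcal{A})\widehat{B}_j$. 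The perturbation then has size at most $\|\widehat B_j\|^2_{C^0}\delta$, and the partial products satisfy $\|\widehat A_j^{k}\|\leqslant 1+k|\widehat\zeta_j|$. The weight is $\sum_{k\leqslant s}\|\widehat B_j\|^2(1+k|\widehat\zeta_j|)^2\delta$. The choice of $j$ and $s$ is the main obstacle: we must trade $\|\widehat B_j\|^2$ against $|\widehat\zeta_j|$ using $\|\widehat B_j\|_{C^0}^2|\widehat\zeta_j|\leqslant 8\|A\|$.

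We will choose $s\sim\delta^{-1/2}$ and $j$ with $s\in I_j$, as in Corollary~\ref{growthA}, so that $\|\widehat B_j\|^2_{C^0}\leqslant s^{o(1)}$ and $\varepsilon_j^{1/4}s^3\ll1$. Two sub-cases arise. If $|\widehat\zeta_j|\lesssim s^{-1}$, the exponent is $\lesssim\|\widehat B_j\|^2 s\delta\lesssim s^{o(1)}\delta^{1/2}$, which is bounded. Otherwise, $\|\widehat B_j\|^2\lesssim |\widehat\zeta_j|^{-1}$ and the exponent is $\lesssim\delta s^3|\widehat\zeta_j|$. Controlling this second sub-case is exactly where we expect difficulty, and we would try to handle it with a Lemma~\ref{amorholder}-type argument rather than through the exponent. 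Either way the aim is $\|\widetilde{\mathcal{A}}_{s}\|_{C^{0}}\leqslant C s^{C}$ at $s\approx\delta^{-1/2}$.

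The conclusion then follows by subadditivity: $L(\alpha,\widetilde{\mathcal{A}})\leqslant\frac1s\int\log\|\widetilde{\mathcal{A}}_s\|\leqslant \frac{C\log s}{s}$. This yields $\delta^{1/2}|\log\delta|$. To remove the logarithm, we would use the sharper linear-in-$s$ bound: $\|\widetilde{\mathcal{A}}_s\|\leqslant C(s+1)$ at a scale $s\approx\delta^{-1/2}$, combined with the standard fact (for instance the Avila--Jitomirskaya trick) that $L\leqslant \frac1{s}\log\|\widetilde{\mathcal{A}}_s\|$ can be improved to $L\lesssim\frac{1}{s}$ when the growth is uniformly linear up to scale $s$. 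The main obstacle remains the uniform choice of $j=j(\delta)$ ensuring $\|\widehat B_j\|^2_{C^0}\delta s^{2}\ll1$ simultaneously with the $\widehat\zeta_j$ bound.
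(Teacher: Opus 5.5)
Your proposal does not close, and the two places you flag as obstacles are exactly where it breaks.

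\textbf{Where the growth approach fails.} In your sub-case $|\widehat\zeta_j|\gg s^{-1}$, Lemma~\ref{prod} is applied to $\widehat A_j e^{\widehat F_j}$ perturbed by $\widehat B_j(\cdot+\alpha)^{-1}(\widetilde{\mathcal A}-\mathcal A)\widehat B_j$. This gives an exponent of order $\|\widehat B_j\|_{C^0}^2\delta s^3|\widehat\zeta_j|^2\lesssim\delta s^3|\widehat\zeta_j|$. At $s\sim\delta^{-1/2}$ this is $\delta^{-1/2}|\widehat\zeta_j|$, which is unbounded when $|\widehat\zeta_j|$ is of order one. That already happens for a constant parabolic $A$ with $F=0$. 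Optimizing $s$ instead gives only $L(\alpha,\widetilde{\mathcal A})\lesssim(\delta|\widehat\zeta_j|)^{1/3}$ up to logarithms, i.e.\ exponent $1/3$.

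This loss is intrinsic to the method. Norm bounds on $\widehat A_j^k$ fed into a product lemma cannot see the $\sqrt{\delta|\widehat\zeta_j|}$ eigenvalue splitting of a perturbed parabolic matrix, and that splitting is the true source of the exponent $1/2$.

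Your other sub-case also has a loss. There you get $L\le s^{-1}\log(C\|\widehat B_j\|_{C^0}^2)$, which carries the unbounded factor $C_\rho|\log\varepsilon_{j-1}|^{(1+\rho)/2}$.

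Your proposed fix for the logarithm is not a valid fact. Take the constant cocycle $\mathrm{diag}(e^{\lambda},e^{-\lambda})$ with $\lambda=s^{-1}\log s$. It satisfies $\|A^k\|=s^{k/s}\le k+1$ for all $0\le k\le s$, yet $L=s^{-1}\log s$.

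\textbf{The missing idea.} What you need is a diagonal rescaling that trades the size of the conjugacy against the parabolic entry; after that, no iteration is needed at all. The paper proceeds as follows.
\begin{itemize}
\item Choose $j$ with $C_0\varepsilon_j^{1/4}<\delta<\exp(-4C_\rho|\log\varepsilon_{j-1}|^{(1+\rho)/2})$, and set $d_j=\|\widehat B_j\|_{C^0}\delta^{1/4}\le 1$.
\item Conjugate by $W_j=\widehat B_j\,\mathrm{diag}(d_j^{-1},d_j)$. Then $\|W_j\|_{C^0}\le\delta^{-1/4}$, so $W_j(\cdot+\alpha)^{-1}(\widetilde{\mathcal A}-\mathcal A)W_j$ has $C^0$ norm $O(\delta^{1/2})$.
\item The upper-right entry of $\widehat A_je^{\widehat F_j}$ becomes $d_j^2\widehat\zeta_j+O(\delta)$. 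By the triangularization bound, $|d_j^2\widehat\zeta_j|=\|\widehat B_j\|_{C^0}^2|\widehat\zeta_j|\delta^{1/2}\le 8\|A\|\delta^{1/2}$.
\item The lower-left entry, originally $O(\varepsilon_j^{1/4})$, is multiplied by $d_j^{-2}\le\delta^{-1/2}$. It is therefore at most $\delta^{-1/2}C_0\varepsilon_j^{1/4}\le\delta^{1/2}$.
\item The diagonal errors are at most $\delta$.
\end{itemize}
Hence the conjugated perturbed cocycle lies within $C\delta^{1/2}$ in $C^0$ of the unitary constant $\mathrm{diag}(e^{2\pi i\widehat\varrho_j},e^{-2\pi i\widehat\varrho_j})$. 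This gives $L(\alpha,\widetilde{\mathcal A})\le\log(1+C\delta^{1/2})\le C\delta^{1/2}$ directly, with no logarithmic loss.

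The bound $\|\widehat B_j\|_{C^0}^2|\widehat\zeta_j|\le 8\|A\|$ that you singled out is indeed the key input. It has to be used through this conjugation, not through a growth estimate for the iterates.
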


\begin{proof}
    If $(\alpha,\mathcal{A})\in\mathcal{UH}$, the Lyapunov exponent is locally Lipschitz continuous in the $C^{0}$ topology; see \cite{MR534172}. It remains to consider the case $(\alpha,\mathcal{A})\notin\mathcal{UH}$. By \hyperref[zeroLE]{Theorem~\ref*{zeroLE}}, we have $L(\alpha,\mathcal{A})=0$. 

    Let 
    \begin{equation*}
        \delta=\|\mathcal{A}-\widetilde{\mathcal{A}}\|_{C^{0}}.
    \end{equation*}
    By \hyperref[NUHKAM]{Corollary~\ref*{NUHKAM}}, for every $j\geqslant 0$, there exist $\widehat{B}_{j}$, $\widehat{A}_{j}$, and $\widehat{F}_{j}$ such that
    \begin{equation}\label{ARholder}
        \widehat{B}_{j}(x+\alpha)^{-1}\mathcal{A}(x)\widehat{B}_{j}(x)=\widehat{A}_{j}e^{\widehat{F}_{j}(x)}=\begin{pmatrix}e^{2\pi i\widehat{\varrho}_{j}}&\widehat{\zeta}_{j}\\0&e^{-2\pi i\widehat{\varrho}_{j}}\end{pmatrix}e^{\widehat{F}_{j}(x)},
    \end{equation}
    where $\widehat{\varrho}_{j}\in\mathbb{R}$ and
    \begin{equation}\label{quantARholder}
        \begin{split}
            &\|\widehat{B}_{j}\|_{C^{0}}\leqslant\exp\big(C_{\rho}|\log\varepsilon_{j-1}|^{\frac{1+\rho}{2}}\big),\qquad \|\widehat{F}_{j}\|_{C^{0}}\leqslant 4\varepsilon_{j}^{\frac{1}{4}},\\
            &\|\widehat{A}_{j}\|\leqslant 4\|A\|,\qquad \|\widehat{B}_{j}\|_{C^{0}}^{2}|\widehat{\zeta}_{j}|\leqslant 8\|A\|.
        \end{split}
    \end{equation}

    Write
    \begin{equation}\label{qdecomp}
        \widehat{A}_{j}e^{\widehat{F}_{j}(x)}=\begin{pmatrix}e^{2\pi i\widehat{\varrho}_{j}}&0\\0&e^{-2\pi i\widehat{\varrho}_{j}}\end{pmatrix}+\begin{pmatrix}q_{11}^{(j)}(x)&q_{12}^{(j)}(x)\\q_{21}^{(j)}(x)&q_{22}^{(j)}(x)\end{pmatrix}.
    \end{equation}
    By \eqref{quantARholder}, there exists $C_{0}=C_{0}(\|A\|)\geqslant 1$ such that
    \begin{equation}\label{qest}
        \begin{split}
            &\|q_{11}^{(j)}\|_{C^{0}}+\|q_{21}^{(j)}\|_{C^{0}}+\|q_{22}^{(j)}\|_{C^{0}}\leqslant C_{0}\varepsilon_{j}^{\frac{1}{4}},\\
            &\|q_{12}^{(j)}\|_{C^{0}}\leqslant |\widehat{\zeta}_{j}|+C_{0}\varepsilon_{j}^{\frac{1}{4}}.
        \end{split}
    \end{equation}

    For every $j\geqslant 0$, define the intervals
    \begin{equation*}
        I_{j}=\Big(C_{0}\varepsilon_{j}^{\frac{1}{4}}, \exp\big(-4C_{\rho}|\log\varepsilon_{j-1}|^{\frac{1+\rho}{2}}\big)\Big).
    \end{equation*}
    Since $\varepsilon_{j+1}=4\varepsilon_{j}^{3}$, one has $\exp\big(-4C_{\rho}|\log\varepsilon_{j}|^{\frac{1+\rho}{2}}\big)\geqslant C_{0}\varepsilon_{j}^{\frac{1}{4}}$. Hence $I_{j}\cap I_{j+1}\neq \emptyset$. Therefore $\{I_{j}\}_{j\geqslant 0}$ covers all sufficiently small $\delta>0$. 

    Fix $j\geqslant 0$ such that $\delta\in I_{j}$. Let $D_{j}=\operatorname{diag}\{d_{j}, d_{j}^{-1}\}$ where $d_{j}=\|\widehat{B}_{j}\|_{C^{0}}\delta^{\frac{1}{4}}$. By \eqref{quantARholder} and $\delta\in I_{j}$, one has
    \begin{equation}\label{dj}
        d_{j}\leqslant\exp\big(C_{\rho}|\log\varepsilon_{j-1}|^{\frac{1+\rho}{2}}\big)\exp\big(-C_{\rho}|\log\varepsilon_{j-1}|^{\frac{1+\rho}{2}}\big)\leqslant 1.
    \end{equation}
    Define $W_{j}(x)=\widehat{B}_{j}(x)D_{j}^{-1}\in\mathrm{SL}(2,\mathbb{C})$. By \eqref{dj}, we have
    \begin{equation}\label{West}
        \|W_{j}\|_{C^{0}}\leqslant 2d_{j}^{-1} \|\widehat{B}_{j}\|_{C^{0}}\leqslant 2\delta^{-\frac{1}{4}}.
    \end{equation}

    By \eqref{ARholder} and \eqref{qdecomp},
    \begin{equation*}
        \begin{split}
            W_{j}(x+\alpha)^{-1}\mathcal{A}(x)W_{j}(x)&=\mathcal{Z}_{j}(x)\\
            &\coloneqq\begin{pmatrix}e^{2\pi i\widehat{\varrho}_{j}}&0\\0&e^{-2\pi i\widehat{\varrho}_{j}}\end{pmatrix}+\begin{pmatrix}q_{11}^{(j)}(x)&d_{j}^{2}q_{12}^{(j)}(x)\\d_{j}^{-2}q_{21}^{(j)}(x)&q_{22}^{(j)}(x)\end{pmatrix}.
        \end{split}
    \end{equation*}
    Since $\delta\in I_{j}$, \eqref{qest} gives
    \begin{equation*}
        \begin{split}
            &\|q_{11}^{(j)}\|_{C^{0}}+\|q_{22}^{(j)}\|_{C^{0}}\leqslant \delta\leqslant \delta^{\frac{1}{2}},\\
            &d_{j}^{-2}\|q_{21}^{(j)}\|_{C^{0}}=\|\widehat{B}_{j}\|_{C^{0}}^{-2}\delta^{-\frac{1}{2}}\|q_{21}^{(j)}\|_{C^{0}}\leqslant \delta^{\frac{1}{2}}.
        \end{split}
    \end{equation*}
    
    Moreover, by \eqref{quantARholder}, \eqref{qest}, and \eqref{dj},
    \begin{equation*}
        \begin{split}
            d_{j}^{2}\|q_{12}^{(j)}\|_{C^{0}}&\leqslant\|\widehat{B}_{j}\|_{C^{0}}^{2}\delta^{\frac{1}{2}}|\widehat{\zeta}_{j}|+d_{j}^{2}C_{0}\varepsilon_{j}^{\frac{1}{4}}\\
            &\leqslant 8\|A\|\delta^{\frac{1}{2}}+\delta\\
            &\leqslant (8\|A\|+1)\delta^{\frac{1}{2}}.
        \end{split}
    \end{equation*}
    
    Consequently,
    \begin{equation}\label{Zest}
        \|\mathcal{Z}_{j}\|_{C^{0}}\leqslant 1+C_{A}\delta^{\frac{1}{2}}.
    \end{equation}

    Define $\widetilde{\mathcal{Z}}_{j}(x)=W_{j}(x+\alpha)^{-1}\widetilde{\mathcal{A}}(x)W_{j}(x)$. By \eqref{West},
    \begin{equation*}
        \|\widetilde{\mathcal{Z}}_{j}-\mathcal{Z}_{j}\|_{C^{0}} \leqslant\|W_{j}^{-1}\|_{C^{0}}\|\widetilde{\mathcal{A}}-\mathcal{A}\|_{C^{0}}\|W_{j}\|_{C^{0}}\leqslant 4\delta^{\frac{1}{2}}.
    \end{equation*}
    Combining this with \eqref{Zest}, we obtain
    \begin{equation*}
        \|\widetilde{\mathcal{Z}}_{j}\|_{C^{0}}\leqslant 1+C_{A}\delta^{\frac{1}{2}}.
    \end{equation*}

    Since the Lyapunov exponent is invariant under continuous conjugacies,
    \begin{equation*}
        L(\alpha,\widetilde{\mathcal{A}})=L(\alpha,\widetilde{\mathcal{Z}}_{j})\leqslant \log\|\widetilde{\mathcal{Z}}_{j}\|_{C^{0}}\leqslant C_{A}\delta^{\frac{1}{2}}.
    \end{equation*}
    The conclusion then follows from $L(\alpha,\mathcal{A})=0$.
\end{proof}

\begin{proof}[Proof of {\hyperref[thm:intro-kam]{Theorem~\ref*{thm:intro-kam}}}]
    Fix any admissible Fourier length $\ell$. Fix any compact subset $\mathcal{K}\subseteq \mathrm{SL}(2,\mathbb{R})$, one may choose $\varepsilon_{0}$ in \hyperref[KAM]{Theorem~\ref*{KAM}} uniform with respect to $A\in\mathcal{K}$. Consequently, the almost reducibility follows from \hyperref[KAM]{Theorem~\ref*{KAM}}.

    If there are only finitely many resonant steps, reducibility follows from the final conclusion of \hyperref[KAM]{Theorem~\ref*{KAM}}.

    Under the stated non-uniform-hyperbolicity assumption, the vanishing of $L(\alpha,\mathcal{A})$ follows from \hyperref[zeroLE]{Theorem~\ref*{zeroLE}}.

    The $1/2$-H\"older continuity of $L(\alpha,\mathcal{A})$ follows from \hyperref[LEholder]{Theorem~\ref*{LEholder}}.
\end{proof}

\section{Schr\"odinger cocycles and spectral results}

\subsection{Reducibility and almost reducibility}
Let $\Sigma_{\alpha,V}$ denote the phase-independent spectrum. 
For the Schr\"odinger cocycle $(\alpha, S_{E}^{V})$, denote $\varrho(E)=\varrho(\alpha,S_{E}^{V})$. The following holds.

\begin{theorem}\label{SOKAM}
    Let $0<\rho<1$, $\alpha\in\mathrm{DC}(\gamma,\tau)$, and $\sigma_{0}>0$. There exists $\varepsilon_{*}=\varepsilon_{*}(\gamma,\tau,\sigma_{0},\rho)>0$ such that for any $V\in \mathcal{A}_{\sigma_{0}}(\mathbb{T},\mathbb{R})$ satisfying $\|V\|_{\sigma_{0}}<\varepsilon_{*}$, the following statements hold.
    \begin{enumerate}
        \item \label{item:SOAR} For every $E\in\Sigma_{\alpha,V}$, there exist $B_{j}$, $A_{j}(E)$, and $F_{j}$ such that
        \begin{equation*}
            B_{j}(x+\alpha)^{-1} S_{E}^{V}(x)B_{j}(x)=A_{j}(E)e^{F_{j}(x)}.
        \end{equation*}
        Moreover, the conjugacy, degree, and triangularization estimates in \hyperref[KAM]{Theorem~\ref*{KAM}} hold.
        
        \item For every $E\in\Sigma_{\alpha,V}$, if $2\varrho(E)=k\alpha \bmod{\mathbb{Z}}$ for some $k\in\mathbb{Z}$, then $(\alpha, S_{E}^{V})$ is reducible to a parabolic constant cocycle.
    \end{enumerate}
\end{theorem}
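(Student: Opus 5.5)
The plan is to write the Schrödinger cocycle in the form required by Theorem~\ref{KAM} and apply it with constants that are uniform in the energy. Since $V$ is real and $\|V\|_{C^0}\leqslant\|V\|_{\sigma_0}<\varepsilon_*\leqslant 1$, the spectrum satisfies $\Sigma_{\alpha,V}\subseteq[-2-\varepsilon_*,2+\varepsilon_*]\subseteq[-3,3]$. We therefore work with the compact family $\mathcal{K}=\{S_E^0:|E|\leqslant 3\}$. For each such $E$ we write
\begin{equation*}
S_E^V(x)=S_E^0\Bigl(\mathrm{I}-(S_E^0)^{-1}\begin{pmatrix}V(x)&0\\0&0\end{pmatrix}\Bigr),
\end{equation*}
and then set
\begin{equation*}
F_E(x)=\log\Bigl(\mathrm{I}-(S_E^0)^{-1}\begin{pmatrix}V(x)&0\\0&0\end{pmatrix}\Bigr).
\end{equation*}
The logarithm is defined by its power series in the Banach algebra $\mathcal{A}_{\sigma_0}$ of Proposition~\ref{banachalgebra}, since the argument of $\log$ is $\mathrm{I}$ plus an element of norm at most $C\|V\|_{\sigma_0}$ with $C$ uniform over $|E|\leqslant 3$. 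The determinant is $1$ and the entries are real, so $F_E\in\mathcal{A}_{\sigma_0}(\mathbb{T},\mathrm{sl}(2,\mathbb{R}))$. The power series also gives $\|F_E\|_{\sigma_0}\leqslant 2C\|V\|_{\sigma_0}$. Note that $\|S_E^0\|\leqslant 5$ on $\mathcal{K}$.

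Next fix $\sigma'=\sigma_0/2$, for instance, and take $\varepsilon_0(\rho,5,\sigma_0,\sigma',\gamma,\tau)$ from Theorem~\ref{KAM}. The constant in Theorem~\ref{KAM} depends on $A$ only through $\|A\|$, as the constants $c(\gamma,\|A\|)$ in Theorem~\ref{onestep} and $(4\|A\|)^{-4}$ in Lemma~\ref{elimination} show, and the bound is monotone in $\|A\|$. So the single value $\|A\|=5$ serves every $E$. Choose $\varepsilon_*$ so that $2C\varepsilon_*<\varepsilon_0$. Theorem~\ref{KAM} then applies to $S_E^0e^{F_E}=S_E^V$ for every $E\in\Sigma_{\alpha,V}$. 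It yields $B_j$, $A_j(E)$ and $F_j$ together with the conjugacy, degree and triangularization estimates, which proves item (1).

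For item (2), let $E\in\Sigma_{\alpha,V}$ satisfy $2\varrho(E)=k\alpha\bmod\mathbb{Z}$. We want to invoke Corollary~\ref{PE}\,(\ref{item:rational}), and its only remaining hypothesis is $(\alpha,S_E^V)\notin\mathcal{UH}$. This is the standard characterization of the spectrum: $E\in\Sigma_{\alpha,V}$ if and only if $(\alpha,S_E^V)$ is not uniformly hyperbolic (Johnson's theorem). Corollary~\ref{PE} then gives $B\in\mathcal{A}_{\sigma'}(2\mathbb{T},\mathrm{SL}(2,\mathbb{R}))$ with
\begin{equation*}
B(x+\alpha)^{-1}S_E^V(x)B(x)=\pm\begin{pmatrix}1&\zeta\\0&1\end{pmatrix},
\end{equation*}
which is a parabolic constant. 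The case $\zeta=0$ is $\pm\mathrm{I}$, a degenerate parabolic constant, which is acceptable here.

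The main obstacle is the uniformity of $\varepsilon_*$ in $E$, which is what allows $\varepsilon_*$ to depend only on $(\gamma,\tau,\sigma_0,\rho)$. This requires checking that every threshold in Theorem~\ref{onestep} and Theorem~\ref{KAM} depends on $A$ only through an upper bound on $\|A\|$. The one place where the matrix itself seems to enter is the resonant step, through the bound on $\|P\|$ from \cite[Lemma 8.1]{MR2969277}. That bound uses only $\|A\|$ and $\|2\varrho\|_{\mathbb{T}}$, and $\|2\varrho\|_{\mathbb{T}}$ is controlled by the Diophantine condition as in \eqref{drho}. Hence the uniformity holds, and I would verify this dependence explicitly before fixing $\varepsilon_*$.
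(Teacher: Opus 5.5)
Your proposal is correct and matches the paper's proof: the paper also applies Theorem~\ref{KAM} uniformly over the compact family $\{S_E^0:|E|\leqslant 3\}$, which contains every $E\in\Sigma_{\alpha,V}$ by the bound $|E|\leqslant 2+\|V\|_{C^0}$, and it obtains item (2) from Corollary~\ref{PE}\,(\ref{item:rational}), where spectral energies give non-uniformly-hyperbolic cocycles. The only difference is cosmetic: your logarithm series terminates after one term, because $(S_E^0)^{-1}\begin{pmatrix}V&0\\0&0\end{pmatrix}=\begin{pmatrix}0&0\\-V&0\end{pmatrix}$ is nilpotent, so $F_E=\begin{pmatrix}0&0\\V&0\end{pmatrix}$ exactly (the paper's $F_V$), and hence $\|F_E\|_{\sigma_0}\leqslant\|V\|_{\sigma_0}$ with no $E$-dependent constant.
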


\begin{proof}
    Let $\sigma_{*}=\frac{\sigma_{0}}{2}$. For every $E\in\mathbb{R}$, we define
    \begin{equation*}
        A(E)=\begin{pmatrix}E&-1\\1&0\end{pmatrix}, \qquad F_{V}(x)=\begin{pmatrix}0&0\\V(x)&0\end{pmatrix}.
    \end{equation*}
    Observe that $\|F_{V}\|_{\sigma_{0}}\leqslant \|V\|_{\sigma_{0}}$ and
    \begin{equation*}
        A(E)e^{F_{V}(x)}=\begin{pmatrix}E-V(x)&-1\\1&0\end{pmatrix}=S_{E}^{V}(x).
    \end{equation*}

    For $\varepsilon_{*}>0$ sufficiently small, \hyperref[thm:intro-kam]{Theorem~\ref*{thm:intro-kam}} applies with widths $\sigma_{0}$ and $\sigma_{*}$ to the compact set
    \begin{equation*}
        \mathcal{K}=\bigg\{\begin{pmatrix}E&-1\\1&0\end{pmatrix} : |E|\leqslant 3\bigg\}.
    \end{equation*}
    For $E\in\Sigma_{\alpha,V}$, the spectrum bound yields
    \begin{equation*}
        |E|\leqslant 2+\|V\|_{C^{0}}\leqslant 2+\|V\|_{\sigma_{0}}<3.
    \end{equation*}
    Hence $A(E)\in\mathcal{K}$ for all $E\in\Sigma_{\alpha,V}$. Since $\Sigma_{\alpha,V}$ is compact, $\varepsilon_{*}$ can be chosen uniformly with respect to $E\in\Sigma_{\alpha,V}$. This verifies all the assumptions of \hyperref[KAM]{Theorem~\ref*{KAM}}. Consequently, \hyperref[KAM]{Theorem~\ref*{KAM}} implies that $(\alpha,S_{E}^{V})$ is quantitatively almost reducible for every $E\in\Sigma_{\alpha,V}$, providing the sequence of transformations $B_{j}$, $A_{j}(E)$, and $F_{j}$ satisfying the desired estimates.

    Finally, if $2\varrho(E)=k\alpha \bmod{\mathbb{Z}}$ for some $k\in\mathbb{Z}$, the reducibility to a parabolic cocycle follows directly from \hyperref[item:rational]{Corollary~\ref*{PE} (\ref*{item:rational})}.
\end{proof}

\subsection{Integrated density of states}
Recall that the spectral measure $\mu_{\delta_{i}}, i=0,1$ of $H_{x,\alpha,V}$ is given by
\begin{equation*}
    \langle \delta_{i},(H_{x,\alpha,V}-z)^{-1}\delta_{i}\rangle= \int_{\mathbb{R}} \frac{1}{E-z}\,\mathrm{d}\mu_{\delta_{i}}(E).
\end{equation*}
Let $\mu_{x,\alpha,V}=\mu_{\delta_{0}}+\mu_{\delta_{1}}$ be the universal spectral measure.
The density of states  measure $\mathrm{d}N_{\alpha,V}$ is the half average of the spectral measure,
\begin{equation*}
    \mathrm{d}N_{\alpha,V}=\frac{1}{2}\int_{\mathbb{T}} \mathrm{d}\mu_{x,\alpha,V} \mathrm{d}x.
\end{equation*}
The integrated density of states (IDS) is defined as
\begin{equation*}
    N_{\alpha,V}(E)\coloneqq \int_{-\infty}^{E} \mathrm{d}N_{\alpha,V}.
\end{equation*}
Moreover, IDS is related to the fibered rotation number via
\begin{equation}\label{idsrotat}
    N_{\alpha,V}(E)=1-2\varrho(E).
\end{equation}

\begin{corollary}\label{idshalfholder}
    Under the assumptions of \hyperref[SOKAM]{Theorem~\ref*{SOKAM}}, for any $\eta>0$,
    \begin{equation*}
        N_{\alpha,V}(E+\eta)-N_{\alpha,V}(E-\eta)\leqslant C \eta^{\frac{1}{2}}.
    \end{equation*}
\end{corollary}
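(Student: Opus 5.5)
The plan is to follow the standard Avila--Jitomirskaya argument, which derives $1/2$-H\"older continuity of the IDS from the growth bound of Corollary~\ref{growthA}. The difference here is that we use the quantitative almost reducibility data of Corollary~\ref{NUHKAM} at every scale. We first dispose of trivial cases. If $E\notin\Sigma_{\alpha,V}$ and the interval $(E-\eta,E+\eta)$ misses the spectrum, the left-hand side vanishes. Otherwise we may shift to a point $E_{0}\in\Sigma_{\alpha,V}$ with $(E-\eta,E+\eta)\subseteq(E_{0}-2\eta,E_{0}+2\eta)$, so it suffices to treat $E\in\Sigma_{\alpha,V}$. For $\eta$ bounded below, the bound is trivial since $N_{\alpha,V}\leqslant 1$. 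For $E\in\Sigma_{\alpha,V}$ the cocycle $(\alpha,S_{E}^{V})$ is not uniformly hyperbolic, so Corollary~\ref{NUHKAM} applies uniformly in $E$, with constants depending only on $\|A(E)\|\leqslant 4$.

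The key step is to transfer the $1/2$-H\"older continuity of the Lyapunov exponent (Theorem~\ref{LEholder}) to the rotation number through the Thouless formula. One clean route treats complex energies: for $\widetilde{\mathcal{A}}=S_{E+i\epsilon}^{V}$ we have $\|\mathcal{A}-\widetilde{\mathcal{A}}\|_{C^{0}}=\epsilon$, so Theorem~\ref{LEholder} gives $L(\alpha,S^{V}_{E+i\epsilon})\leqslant C\epsilon^{1/2}$. On the other hand, the Thouless formula $L(E+i\epsilon)=\int\log|E+i\epsilon-E'|\,\mathrm{d}N(E')$ together with $L\geqslant0$ yields
\begin{equation*}
L(E+i\epsilon)-L(E)\;\geqslant\;\int\log\frac{|E+i\epsilon-E'|}{|E-E'|}\,\mathrm{d}N(E')\;\geqslant\;c\,\bigl(N(E+\epsilon)-N(E-\epsilon)\bigr),
\end{equation*}
using that $\log\sqrt{1+\epsilon^{2}/|E-E'|^{2}}\geqslant\frac{1}{2}\log 2$ when $|E-E'|\leqslant\epsilon$ and that the integrand is nonnegative. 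Since $L(E)=0$ by Theorem~\ref{zeroLE}, this gives $N(E+\epsilon)-N(E-\epsilon)\leqslant C'\epsilon^{1/2}$, which is the claim with $\epsilon=\eta$.

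The main obstacle is justifying that Theorem~\ref{LEholder} applies with $\widetilde{\mathcal{A}}$ complex and with a constant $C$ uniform in $E\in\Sigma_{\alpha,V}$. That theorem is stated for $\widetilde{\mathcal{A}}\in C^{0}(\mathbb{T},\mathrm{SL}(2,\mathbb{C}))$, so the complex case is covered. Uniformity follows from its proof: $C$ depends only on $C_{0}(\|A\|)$, $C_{\rho}$, and the intervals $I_{j}$. These intervals depend only on $\varepsilon_{j}$, hence only on $\varepsilon_{0}$, which is uniform over the compact set $\mathcal{K}$ from Theorem~\ref{SOKAM}. For small $\delta$ not yet covered by $\bigcup_{j}I_{j}$ (that is, large $\delta$), the estimate is trivial.

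If one prefers to avoid the Thouless formula, the fallback is to estimate the spectral measure directly by the Avila--Jitomirskaya bound $\mu_{x}(E-\eta,E+\eta)\leqslant C\eta\sup_{0\leqslant s\leqslant C/\eta}\|\mathcal{A}_{s}\|^{2}$. Here Corollary~\ref{growthA} alone only gives $\eta^{-1}$, which is too weak. One must instead rerun the proof of Corollary~\ref{growthA} with $s\approx\eta^{-1/2}$, using the conjugation $W_{j}$ from Theorem~\ref{LEholder} to get $\|\mathcal{A}_{s}\|\leqslant C\eta^{-1/4}$ for $s\leqslant\eta^{-1/2}$. I would attempt the Thouless route first.
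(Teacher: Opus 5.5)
Your proposal is correct and follows the paper's proof. The paper applies Theorem~\ref{LEholder} with $\widetilde{\mathcal{A}}=S_{E+i\eta}^{V}$ to get $L(E+i\eta)-L(E)\leqslant C\eta^{1/2}$, and then uses the Thouless formula to bound this difference from below by $\frac{\log 2}{2}\big(N_{\alpha,V}(E+\eta)-N_{\alpha,V}(E-\eta)\big)$. Your reduction to $E\in\Sigma_{\alpha,V}$ and your check that the constant is uniform over $|E|\leqslant 3$ are correct details that the paper leaves implicit, and the fallback route is not needed.
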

\begin{proof}
    Let $E\in\mathbb{R}$ and $\eta>0$. We apply \hyperref[LEholder]{Theorem~\ref*{LEholder}} to the Schr\"odinger cocycle. Denote $L(E)\coloneqq L(\alpha, S_{E}^{V})$ for short. Let $\mathcal{A}(x)=S_{E}^{V}(x)$ and $\widetilde{\mathcal{A}}(x)=S_{E+i\eta}^{V}$. Thus
    \begin{equation}\label{complexLE}
        0\leqslant L(E+i\eta)-L(E)\leqslant C\eta^{1/2}.
    \end{equation}
    Thouless formula gives that
    \begin{equation*}
        L(E)=\int_{\mathbb{R}}\log |E'-E|\,\mathrm{d}N_{\alpha,V}(E').
    \end{equation*}
    Thus
    \begin{equation*}
        L(E+i\eta)-L(E)=\frac{1}{2} \int_{\mathbb{R}} \log\bigg( 1+\frac{\eta^{2}}{(E-E')^{2}}\bigg) \,\mathrm{d}N_{\alpha,V}(E').
    \end{equation*}
Therefore,
    \begin{equation*}
        L(E+i\eta)-L(E)
        \geqslant
        \frac{\log 2}{2}
        \big(
            N_{\alpha,V}(E+\eta)
            -
            N_{\alpha,V}(E-\eta)
        \big).
    \end{equation*}
    Combining this estimate with \eqref{complexLE}, we complete the proof.
\end{proof}

\subsection{Absolutely continuous spectrum}

\begin{theorem}\label{AC}
    Under the assumptions of \hyperref[SOKAM]{Theorem~\ref*{SOKAM}}, the operator $H_{x,\alpha,V}$ has purely absolutely continuous spectrum for every $x\in\mathbb{T}$.
\end{theorem}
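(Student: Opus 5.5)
The plan is to follow the Avila--Fayad--Krikorian/Avila scheme: quantitative almost reducibility plus the subordinacy-type criterion that bounds spectral measures by growth of transfer matrices. The standard tool (e.g.\ \cite{MR2846380}) is the estimate
\begin{equation*}
    \mu_{x}\big(E-\epsilon,E+\epsilon\big)\leqslant C\,\epsilon\,\sup_{0\leqslant s\leqslant C/\epsilon}\|\mathcal{A}_{s}(E)\|_{C^{0}}^{2},
\end{equation*}
valid uniformly in $x$, where $\mathcal{A}_{s}(E)$ is the $s$-step transfer matrix of $(\alpha,S_{E}^{V})$. By the Last--Simon/Jitomirskaya--Last theory, if $\sup_{s\geqslant 0}\|\mathcal{A}_{s}(E)\|_{C^{0}}<\infty$ for every $E$ in a set $\mathcal{B}$, then $\mu_{x}|_{\mathcal{B}}$ is absolutely continuous. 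Accordingly, I will split $\Sigma_{\alpha,V}=\mathcal{B}\cup\mathcal{R}$. Here $\mathcal{B}$ consists of energies for which only finitely many KAM steps are resonant, and $\mathcal{R}$ consists of those with infinitely many resonant steps.

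\textbf{Step 1 (the set $\mathcal{B}$).} For $E\in\mathcal{B}$, Theorem~\ref{KAM}(\ref{item:RE}) gives a conjugacy $B\in\mathcal{A}_{\sigma_{0}/2}(2\mathbb{T},\mathrm{SL}(2,\mathbb{R}))$ to a constant $\bar{A}$. Since $E\in\Sigma_{\alpha,V}$, $\bar{A}$ is not hyperbolic, so it is elliptic or parabolic.
\begin{enumerate}
    \item If $\bar{A}$ is elliptic (or $\pm\mathrm{I}$), then $\|\mathcal{A}_{s}(E)\|$ is uniformly bounded in $s$ and $x$. Hence $\mu_{x}|_{\mathcal{B}_{\mathrm{ell}}}$ is absolutely continuous.
    \item If $\bar{A}$ is parabolic and nontrivial, then $2\varrho(E)\in\alpha\mathbb{Z}+\mathbb{Z}$. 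There are only countably many such $E$, so I only need to rule out atoms at these energies. Linear growth $\|\mathcal{A}_{s}\|\leqslant C(s+1)$ excludes $\ell^{2}$ eigenvectors, because a polynomially bounded non-decaying cocycle orbit cannot be square summable. More directly, a parabolic reduction yields a bounded generalized eigenfunction, which is not $\ell^{2}$, so there is no point mass.
\end{enumerate}

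\textbf{Step 2 (the set $\mathcal{R}$).} I show $\mu_{x}(\mathcal{R})=0$ for every $x$. For $E\in\mathcal{R}$ and each resonant step $j$, the computation in the proof of Corollary~\ref{PE} gives $n=\deg B_{j+1}$ with $\ell(n)\leqslant 2N_{j}$ and $\|2\varrho(E)-n\alpha\|_{\mathbb{T}}\leqslant 2\varepsilon_{j}^{1/10}$. Hence
\begin{equation*}
    \mathcal{R}\subseteq\bigcap_{J}\bigcup_{j\geqslant J}\mathcal{R}_{j},\qquad \mathcal{R}_{j}=\bigcup_{\ell(n)\leqslant 2N_{j}}\big\{E:\|2\varrho(E)-n\alpha\|_{\mathbb{T}}\leqslant 2\varepsilon_{j}^{1/10}\big\}.
\end{equation*}
Each set in this union is, via \eqref{idsrotat}, a short $N$-interval $K_{j,n}$. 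I then cover $K_{j,n}$ by energy intervals of length about $\varepsilon_{j}^{1/20}$. On each such interval, the growth bound from Corollary~\ref{growthA} and Corollary~\ref{NUHKAM} at scale $j$ controls $\|\mathcal{A}_{s}(E)\|$ up to $s\sim\varepsilon_{j}^{-1/20}$. Specifically,
\begin{equation*}
    \|\mathcal{A}_{s}\|\leqslant\|\widehat{B}_{j}\|^{2}(1+s|\widehat{\zeta}_{j}|)\leqslant \exp\big(C|\log\varepsilon_{j-1}|^{\frac{1+\rho}{2}}\big)+8\|A\|s,
\end{equation*}
and this bound holds uniformly in $E$ near the resonance because the KAM construction is stable in $E$. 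Plugging it into the spectral measure estimate gives $\mu_{x}(\text{interval})\leqslant C\varepsilon_{j}^{1/20}\cdot\varepsilon_{j}^{-c}$ per interval, with $c$ small. The number of intervals is controlled by three factors: $Q_{\ell}(2N_{j})\leqslant C_{\ell}\exp(c_{\ell}(2N_{j})^{\rho})$ labels $n$, the $1/2$-H\"older IDS bound of Corollary~\ref{idshalfholder}, and the total measure. Since $N_{j}^{\rho}\sim|\log\varepsilon_{j}|^{\rho}(j+1)^{2\rho}=o(|\log\varepsilon_{j}|)$, the label count is subexponential in $|\log\varepsilon_{j}|$. This yields $\mu_{x}(\mathcal{R}_{j})\leqslant\varepsilon_{j}^{c'}$, which is summable, and Borel--Cantelli gives $\mu_{x}(\mathcal{R})=0$.

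\textbf{Main obstacle.} The hard step is Step 2. It requires making the growth estimate uniform over a whole energy interval around a resonant energy, not just at the single energy where KAM was run. It also requires matching the interval length $\varepsilon_{j}^{1/20}$ against $\|\widehat{B}_{j}\|^{2}\leqslant\exp(C|\log\varepsilon_{j-1}|^{(1+\rho)/2})$ and the label count $\exp(c_{\ell}N_{j}^{\rho})$. I would first try to use the fact that the conjugacies $B_{j}$ depend only on finitely many steps, so they remain valid for nearby energies with perturbation $\|F_{j}\|\lesssim\varepsilon_{j}+|E-E'|$. This makes the polynomial growth bound hold on an $\varepsilon_{j}$-neighborhood. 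The whole point is that every loss is subexponential in $|\log\varepsilon_{j}|$, which is exactly what the matched arithmetic condition $\mathrm{DC}_{\ell}$ and the ball growth \eqref{eq:abstract-ball} guarantee.
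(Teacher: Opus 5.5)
Your architecture matches the paper's: the reducible energies are handled by boundedness plus countability of the parabolic ones, and the rest are shown $\mu_x$-null by Borel--Cantelli, using the spectral-measure bound and a label count by $Q_\ell(2N_j)$. But Step~2 does not close as written. The growth bound you feed into the spectral-measure estimate,
\begin{equation*}
\|\mathcal{A}_s\|\leqslant\|\widehat{B}_j\|_{C^0}^2(1+s|\widehat{\zeta}_j|)\leqslant\exp\big(C|\log\varepsilon_{j-1}|^{\frac{1+\rho}{2}}\big)+8\|A\|s,
\end{equation*}
has a term linear in $s$. With $\epsilon=\varepsilon_j^{1/20}$ and $s$ up to $C\epsilon^{-1}$, it contributes $C\epsilon\cdot s^2\sim\epsilon^{-1}$ to $\mu_x(E-\epsilon,E+\epsilon)$. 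So the claimed per-interval bound $C\varepsilon_j^{1/20}\varepsilon_j^{-c}$ does not follow; the linear bound of Corollary~\ref{growthA} is useless in this lemma.

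The missing idea is to use the structure of a \emph{resonant} step. Let $K_j$ be the set of $E\in\Sigma_{\alpha,V}$ with $A_j(E)\in\mathcal{RS}_\ell(N_j,\varepsilon_j^{1/10})$. For $E\in K_j$, Theorem~\ref{onestep} gives $\|A_{j+1}-\chi_{j+1}\mathrm{I}\|\leqslant 10\varepsilon_j^{1/10}$ and $\|F_{j+1}\|_{\sigma_{j+1}}\leqslant\varepsilon_j^{100}$. Hence $\|A_{j+1}^s\|\leqslant 2$ for $s\leqslant C\varepsilon_j^{-1/20}$, and
\begin{equation*}
\sup_{0\leqslant s\leqslant C\varepsilon_j^{-1/20}}\|\mathcal{A}_s(E)\|_{C^0}\leqslant 4\|B_{j+1}\|_{C^0}^2\leqslant 4\exp\big(2C_\rho|\log\varepsilon_j|^{\frac{1+\rho}{2}}\big),
\end{equation*}
with no linear term. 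This bound is available only at energies whose $j$-th step is resonant. So you should work with $K_j$ (your set of energies with infinitely many resonant steps is exactly $\limsup_j K_j$), not with the rotation-number sets $\mathcal{R}_j$, whose points need not have a resonant $j$-th step.

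Once you do this, the ``main obstacle'' you identify disappears. The spectral-measure lemma uses transfer matrices only at the center energy, so you just take the centers $E_r\in K_j$, and no stability of the KAM scheme in $E$ is needed. Your suggested fix via an $\varepsilon_j$-neighbourhood would not suffice anyway, since the covering intervals have radius $\varepsilon_j^{1/20}\gg\varepsilon_j$.

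The counting also rests on the wrong ingredient. The $1/2$-H\"older bound of Corollary~\ref{idshalfholder} bounds $N$-increments from above. It cannot limit how many intervals of radius $\varepsilon_j^{1/20}$ are needed to cover $N^{-1}(J_n)\cap\Sigma_{\alpha,V}$ for an $N$-window $J_n$ of length $4\varepsilon_j^{1/10}$. What does this is a lower bound at points of the spectrum: $N(E+\epsilon)-N(E-\epsilon)\geqslant c\epsilon^{3/2}|\log\epsilon|^{-1}$ for $E\in\Sigma_{\alpha,V}$ (Lemma~\ref{idslower}; the H\"older estimate enters only as an input to it). With $\epsilon=\varepsilon_j^{1/20}$ this exceeds $8\varepsilon_j^{1/10}$. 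After refining the cover to bounded overlap, each $J_n$ meets only boundedly many images $N(I_r)$, giving $R\lesssim Q_\ell(2N_j)$. The subexponential losses then combine to $\mu_x(K_j)\lesssim\varepsilon_j^{1/40}$, as you anticipated.

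A smaller point in Step~1: the bound $\|\mathcal{A}_s\|\leqslant C(s+1)$ alone does not exclude $\ell^2$ solutions. It only yields $|U_n|\gtrsim n^{-1}$, which is square summable. The right argument is that a bounded parabolic reduction forces $\inf_n(|u_{n+1}|^2+|u_n|^2)>0$ for every nonzero solution. Your ``bounded generalized eigenfunction'' remark works only if completed by this, or by a Wronskian argument.
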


\begin{proof}
    Let $\Sigma=\Sigma_{\alpha,V}$. Define 
    \begin{equation*}
        \begin{split}
            \mathscr{B}&=\big\{E\in\Sigma: \sup_{s\in \mathbb{Z}}\|(S_{E}^{V})_{s}\|_{C^{0}}<\infty\big\},\\
            \mathscr{R}&=\{E\in\Sigma: (\alpha, S_{E}^{V}) \ \text{is reducible}\}.
        \end{split}
    \end{equation*}
    By the Gilbert and Pearson subordinacy theory \cite{MR915965}, the restricted measure $\mu_{x,\alpha,V}|_{\mathscr{B}}$ is absolutely continuous for every $x\in\mathbb{T}$. Thus, it remains to show that $\mu_{x,\alpha,V}(\Sigma\setminus\mathscr{B})=0$. 

    For any $E\in \mathscr{R}\setminus\mathscr{B}$, the cocycle $(\alpha,S_{E}^{V})$ is reducible to a parabolic one, and according to the gap-labeling theorem \cite{MR667409}, there exists a unique $n\in\mathbb{Z}$ such that $2\varrho(E)=n\alpha\bmod\mathbb{Z}$. Consequently, the set $\mathscr{R}\setminus \mathscr{B}$ is countable. Moreover, there are no eigenvalues in $\mathscr{R}$ because any non-zero solution of $H_{x,\alpha,V}u=Eu$ satisfies $\inf_{n\in\mathbb{Z}} \big(|u(n+1)|^{2}+|u(n)|^{2}\big)>0$, which further implies that $\mu_{x,\alpha,V}(\mathscr{R}\setminus\mathscr{B})=0$. Hence, we only need to show that $\mu_{x,\alpha,V}(\Sigma\setminus \mathscr{R})=0$. 

    For every $j\geqslant 0$, define
    \begin{equation*}
        K_{j}\coloneqq \big\{E\in\Sigma: A_{j}(E)\in \mathcal{RS}_{\ell}(N_{j},\varepsilon_{j}^{\frac{1}{10}})\big\}.
    \end{equation*}
    By the reducibility conclusion of \hyperref[KAM]{Theorem~\ref*{KAM}}, we have $\Sigma\setminus \mathscr{R}\subseteq \limsup_{j\to\infty}K_{j}$. Thus, by the Borel--Cantelli lemma, it suffices to show that 
    \begin{equation}\label{BCA}
        \sum_{j\geqslant 1}\mu_{x,\alpha,V}(K_{j})<\infty.
    \end{equation}

    Recall the following standard estimate.
    \begin{lemma}[{\cite[Lemma 2.5]{avila2008absolutelycontinuousspectrummathieu}}]\label{Avilamu}
        There exists a universal constant $C>0$ such that
        \begin{equation*}
            \mu_{x,\alpha,V}(E-\epsilon,E+\epsilon)\leqslant C\epsilon \sup_{0\leqslant s\leqslant C\epsilon^{-1}} \|(S_{E}^{V})_{s}\|_{C^{0}}^{2}.
        \end{equation*}
    \end{lemma}

    \begin{lemma}\label{rsgrowth}
        Let $C>0$ be the constant in \hyperref[Avilamu]{Lemma~\ref*{Avilamu}}. For any $E\in K_{j}$,
        \begin{equation*}
            \sup_{0\leqslant s\leqslant C\varepsilon_{j}^{-\frac{1}{20}}}\|(S_{E}^{V})_{s}\|_{C^{0}}\leqslant 4\exp\big(2C_{\rho}|\log\varepsilon_{j}|^{\frac{1+\rho}{2}}\big).
        \end{equation*}
    \end{lemma}
    \begin{proof}
        Since $E\in K_{j}$, the KAM step at scale $j$ is resonant. By \hyperref[onestep]{Theorem~\ref*{onestep}} and the iterative estimates in \hyperref[KAM]{Theorem~\ref*{KAM}}, there exists $\chi_{j+1}\in\{\pm 1\}$ such that
        \begin{equation*}
            B_{j+1}(x+\alpha)^{-1}S_{E}^{V}(x)B_{j+1}(x)=A_{j+1}e^{F_{j+1}(x)},
        \end{equation*}
        where
        \begin{equation*}
            \begin{split}
                &\|B_{j+1}\|_{C^{0}}\leqslant\exp\big(C_{\rho}|\log\varepsilon_{j}|^{\frac{1+\rho}{2}}\big),\\
                &\|F_{j+1}\|_{\sigma_{j+1}}\leqslant\varepsilon_{j}^{100},\qquad \|A_{j+1}-\chi_{j+1}\mathrm{I}\|\leqslant 10\varepsilon_{j}^{\frac{1}{10}}.
            \end{split}
        \end{equation*}
        For any $0\leqslant s\leqslant C\varepsilon_{j}^{-\frac{1}{20}}$, we have $\|A_{j+1}^{s}\|\leqslant \exp(10s \varepsilon_{j}^{\frac{1}{10}})\leqslant 2$. Thus by \hyperref[prod]{Lemma~\ref*{prod}}, we write
        \begin{equation*}
            \prod_{k=s-1}^{0}A_{j+1}e^{F_{j+1}(x+k\alpha)}=A_{j+1}^{s} (\mathrm{I}+\Xi_{s}(x)),
        \end{equation*}
        where
        \begin{equation*}
            \|\Xi_{s}(x)\|_{C^{0}}\leqslant \exp(Cs\varepsilon_{j}^{100})-1\leqslant 1.
        \end{equation*}
        Thus
        \begin{equation*}
            \sup_{0\leqslant s\leqslant C\varepsilon_{j}^{-\frac{1}{20}}}\|(S_{E}^{V})_{s}\|_{C^{0}} \leqslant 4 \|B_{j+1}\|_{C^{0}}^{2} \leqslant 4\exp\big(2C_{\rho}|\log\varepsilon_{j}|^{\frac{1+\rho}{2}}\big).
        \end{equation*}
    \end{proof}

    By \hyperref[Avilamu]{Lemma~\ref*{Avilamu}} and \hyperref[rsgrowth]{Lemma~\ref*{rsgrowth}}, we have for any $E\in K_{j}$,
    \begin{equation}\label{single}
        \mu_{x,\alpha,V}\big(E-\varepsilon_{j}^{\frac{1}{20}},E+\varepsilon_{j}^{\frac{1}{20}}\big)\leqslant C\varepsilon_{j}^{\frac{1}{20}}\exp\big(4C_{\rho}|\log\varepsilon_{j}|^{\frac{1+\rho}{2}}\big).
    \end{equation}

    As a consequence of the Thouless formula and the $1/2$-H\"older continuity of the IDS (see \hyperref[idshalfholder]{Corollary~\ref*{idshalfholder}}), we obtain the following lower bound on $N(E)\coloneqq N_{\alpha,V}(E)$. We omit the proof because it closely follows the one in \cite{avila2008absolutelycontinuousspectrummathieu}.

    \begin{lemma}[{\cite[Lemma 3.11]{avila2008absolutelycontinuousspectrummathieu}}]\label{idslower}
        There exists a constant $c=c(\alpha,V)>0$ such that for any $E\in\Sigma$ and $0<\epsilon<1$,
        \begin{equation*}
            N(E+\epsilon)-N(E-\epsilon)\geqslant c\epsilon^{\frac{3}{2}}|\log \epsilon|^{-1}.
        \end{equation*}
    \end{lemma}

    \begin{lemma}\label{coverK}
        There exists a constant $C>0$ such that, for every sufficiently large $j$, there are $\{E_{r}\}_{r=1}^{R}\subseteq K_{j}$ satisfying
        \begin{equation*}
            R\leqslant 12Q_{\ell}(2N_{j})\leqslant 12C_{\ell}\exp\big(c_{\ell}(2N_{j})^{\rho}\big)
        \end{equation*}
        and
        \begin{equation*}
            \overline{K_{j}}\subseteq\bigcup_{r=1}^{R}\big(E_{r}-\varepsilon_{j}^{\frac{1}{20}},E_{r}+\varepsilon_{j}^{\frac{1}{20}}\big).
        \end{equation*}
    \end{lemma}

    \begin{proof}
        Fix $E\in K_{j}$. Then there exists $n_{j}\in\mathbb{Z}$ satisfying
        \begin{equation*}
            0<\ell(n_{j})\leqslant N_{j},\qquad \|2\varrho(A_{j}(E))-n_{j}\alpha\|_{\mathbb{T}}\leqslant\varepsilon_{j}^{\frac{1}{10}}.
        \end{equation*}
        Moreover,
        \begin{equation*}
            B_{j}(x+\alpha)^{-1}S_{E}^{V}(x)B_{j}(x)=A_{j}(E)e^{F_{j}(E,x)}.
        \end{equation*}
        By the inductive construction, one has $\ell(\operatorname{deg}B_{j})\leqslant 2N_{j-1}$.
        Let
        \begin{equation*}
            \widetilde{n}_{j}=n_{j}+\operatorname{deg}B_{j}.
        \end{equation*}
        By the subadditivity of $\ell$,
        \begin{equation*}
            \ell(\widetilde{n}_{j})\leqslant\ell(n_{j})+\ell(\operatorname{deg}B_{j})\leqslant 2N_{j-1}+N_{j}\leqslant 2N_{j}.
        \end{equation*}
        Since $2\varrho(\alpha,A_{j}e^{F_{j}})=2\varrho(E)-(\operatorname{deg}B_{j})\alpha\bmod\mathbb{Z}$, and \hyperref[amorholder]{Lemma~\ref*{amorholder}} gives 
        \begin{equation*}
            |\varrho(\alpha,A_{j}e^{F_{j}})-\varrho(A_{j})|\leqslant C\|F_{j}\|_{C^{0}}^{\frac{1}{2}}\leqslant C\varepsilon_{j}^{\frac{1}{2}},
        \end{equation*}
        we obtain
        \begin{equation}\label{interval}
            \|2\varrho(E)-\widetilde{n}_{j}\alpha\|_{\mathbb{T}}\leqslant 2\varepsilon_{j}^{\frac{1}{10}}.
        \end{equation}
        Since \eqref{idsrotat} gives $N(E)=1-2\varrho(E)$, it follows from \eqref{interval} that there exist intervals $J_{n}\subseteq\mathbb{T}$ of length $|J_{n}|=4\varepsilon_{j}^{\frac{1}{10}}$ such that
        \begin{equation*}
            N(K_{j})\coloneqq \{N(E): E\in K_{j}\} \subseteq \bigcup_{\ell(n)\leqslant 2N_{j}}J_{n},
        \end{equation*}
        whose total number is at most $2Q_{\ell}(2N_{j})$.

        Since $\overline{K_{j}}$ is compact, there exists a finite family $\{E_{r}\}_{r=1}^{R}\subseteq K_{j}$ such that
        \begin{equation*}
            \overline{K_{j}}\subseteq\bigcup_{r=1}^{R}I_{r},\qquad I_{r}=\big(E_{r}-\varepsilon_{j}^{\frac{1}{20}},E_{r}+\varepsilon_{j}^{\frac{1}{20}}\big).
        \end{equation*}
        By refining the cover, we may assume that every point of $\mathbb{R}$ belongs to at most two intervals $I_{r}$.

        By \hyperref[idslower]{Lemma~\ref*{idslower}}, for all sufficiently large $j$, we have
        \begin{equation*}
            |N(I_{r})|=N\big(E_{r}+\varepsilon_{j}^{\frac{1}{20}}\big)-N\big(E_{r}-\varepsilon_{j}^{\frac{1}{20}}\big)\geqslant 8\varepsilon_{j}^{\frac{1}{10}}=2|J_{n}|.
        \end{equation*}
        Therefore, for each $J_{n}$, there are at most $6$ intervals $N(I_{r})$ that can intersect $J_{n}$. Consequently, by \eqref{eq:abstract-ball},
        \begin{equation*}
            R\leqslant 12Q_{\ell}(2N_{j})\leqslant 12C_{\ell}\exp\big(c_{\ell}(2N_{j})^{\rho}\big),
        \end{equation*}
        which proves the lemma.
    \end{proof}

    Combining \hyperref[coverK]{Lemma~\ref*{coverK}} with \eqref{single}, we obtain
    \begin{equation*}
        \begin{split}
            \mu_{x,\alpha,V}(K_{j})&\leqslant CQ_{\ell}(2N_{j})\varepsilon_{j}^{\frac{1}{20}}\exp\big(4C_{\rho}|\log\varepsilon_{j}|^{\frac{1+\rho}{2}}\big)\\
            &\leqslant C\varepsilon_{j}^{\frac{1}{20}}\exp\big(4C_{\rho}|\log\varepsilon_{j}|^{\frac{1+\rho}{2}}+c_{\ell}(2N_{j})^{\rho}\big).
        \end{split}
    \end{equation*}
    By the definition of $N_{j}$ and the fact that $0<\rho<1$,
    \begin{equation*}
        4C_{\rho}|\log\varepsilon_{j}|^{\frac{1+\rho}{2}}+c_{\ell}(2N_{j})^{\rho}\leqslant\frac{1}{40}|\log\varepsilon_{j}|.
    \end{equation*}
    Combining this with $\varepsilon_{j+1}=4\varepsilon_{j}^{3}$ gives
    \begin{equation*}
        \sum_{j\geqslant 1}\mu_{x,\alpha,V}(K_{j})\leqslant C\sum_{j\geqslant 1}\varepsilon_{j}^{\frac{1}{40}}<\infty.
    \end{equation*}
    Thus \eqref{BCA} holds. This finishes the proof.
\end{proof}
\begin{proof}[Proof of {\hyperref[thm:intro-schrodinger]{Theorem~\ref*{thm:intro-schrodinger}}}]
    The almost reducibility for every $E\in\Sigma_{\alpha,V}$ follows from \hyperref[SOKAM]{Theorem~\ref*{SOKAM}}.

    (2) The purely absolutely continuous spectrum follows from \hyperref[AC]{Theorem~\ref*{AC}}.

    (3) Let $\mathscr{P}\subseteq\mathscr{R}$ be the set of energies for which the reducing constant is not elliptic. Thus,
    \begin{equation*}
        \mathscr{P}\subseteq \bigcup_{k\in\mathbb{Z}}\big\{E\in\Sigma_{\alpha,V}: 2\varrho(\alpha,S_{E}^{V})=k\alpha\bmod\mathbb{Z}\big\}.
    \end{equation*}
    Consequently, $\mathscr{P}$ is countable. Since $\mu_{x}$ is absolutely continuous, one has
    \begin{equation*}
        \mu_{x}(\mathscr{P})=0.
    \end{equation*}
    Therefore, $(\alpha,S_{E}^{V})$ is reducible to an elliptic constant for $\mu_{x}$-almost every $E$.

    (4) The $1/2$-H\"older continuity of the IDS follows from \hyperref[idshalfholder]{Corollary~\ref*{idshalfholder}}.
\end{proof}

\subsection{Aubry duality and adapted localization}

For 
\begin{equation*}
    V(x)=\sum_{k\in\mathbb{Z}}\widehat{V}(k)e^{2\pi i kx}\in\mathcal{A}_{\sigma}^{\ell}(\mathbb{T},\mathbb{R}),
\end{equation*}
define the dual operator on $\ell^{2}(\mathbb{Z})$ by
\begin{equation}\label{eq:dual}
    [\widehat{H}_{\alpha,\theta,V}u]_{n} = \sum_{k\in\mathbb{Z}}\widehat{V}(k)u_{n-k} + 2\cos 2\pi(\theta+n\alpha)u_{n}.
\end{equation}
We first prove that the regularity of the potential cannot exceed that of the Bloch waves.

\begin{proposition}
\label{prop:bloch-regularity}
    Let $V\in C^{0}(\mathbb{T},\mathbb{R})$, $\alpha\in\mathbb{R}\setminus\mathbb{Q}$, $E\in\mathbb{R}$, $0<r<1$, and $z\in\mathbb{C}$ with $|z|=1$. Suppose that there exist $\psi_{+},\psi_{-}\in C^{r}(\mathbb{T},\mathbb{C})$ satisfying
    \begin{equation}\label{psi+}
        z\psi_{+}(x+\alpha)+z^{-1}\psi_{+}(x-\alpha)+V(x)\psi_{+}(x)=E\psi_{+}(x)
    \end{equation}
    and
    \begin{equation}\label{psi-}
        z^{-1}\psi_{-}(x+\alpha)+z\psi_{-}(x-\alpha)+V(x)\psi_{-}(x)=E\psi_{-}(x).
    \end{equation}
    Assume that the matrix
    \begin{equation*}
        U(x)=
        \begin{pmatrix}
            \psi_{+}(x)&\psi_{-}(x)\\
            z^{-1}\psi_{+}(x-\alpha)&z\psi_{-}(x-\alpha)
        \end{pmatrix}
    \end{equation*}
    is invertible for some $x\in\mathbb{T}$. Then $V\in C^{r}(\mathbb{T},\mathbb{R})$.
\end{proposition}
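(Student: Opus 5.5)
The plan is to solve the two eigen-equations for $V$ locally, using a Wronskian-type determinant, and then to spread the local conclusion to the whole torus using irrationality of $\alpha$. Set $W(x)=\det U(x)=z\psi_{+}(x)\psi_{-}(x-\alpha)-z^{-1}\psi_{-}(x)\psi_{+}(x-\alpha)$.

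\emph{Step 1: $W$ is invariant under the rotation.} Multiply \eqref{psi+} by $\psi_{-}(x)$ and \eqref{psi-} by $\psi_{+}(x)$ and subtract. The $V$ and $E$ terms cancel, leaving
\begin{equation*}
z\psi_{+}(x+\alpha)\psi_{-}(x)-z^{-1}\psi_{-}(x+\alpha)\psi_{+}(x)=z\psi_{+}(x)\psi_{-}(x-\alpha)-z^{-1}\psi_{-}(x)\psi_{+}(x-\alpha),
\end{equation*}
that is, $W(x+\alpha)=W(x)$. Since $\psi_{\pm}$ are continuous, $W$ is continuous. Because $\alpha$ is irrational, the orbit of any point is dense, so $W$ is constant. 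By hypothesis $W$ is nonzero at some point, hence $W\equiv W_{0}\neq0$ everywhere. This is the step where irrationality and the pointwise invertibility assumption enter. It is also the only genuinely non-routine point: without it one would only get $V\in C^{r}$ near a single point.

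\emph{Step 2: solve for $V-E$.} Equations \eqref{psi+} and \eqref{psi-} form a linear system. Its unknowns are $z\psi_{\pm}(x+\alpha)$-type terms, or alternatively the scalar $V(x)-E$ together with the forward values. The cleanest route is the following. For $x$ with $\psi_{+}(x)\neq0$, divide \eqref{psi+} to get
\begin{equation*}
V(x)=E-\frac{z\psi_{+}(x+\alpha)+z^{-1}\psi_{+}(x-\alpha)}{\psi_{+}(x)}.
\end{equation*}
The analogous formula holds on the set where $\psi_{-}(x)\neq0$. The invertibility of $U(x)$, namely $W_{0}\neq0$, forces $(\psi_{+}(x),\psi_{-}(x))\neq(0,0)$ for every $x$. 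So the open sets $\{\psi_{+}\neq0\}$ and $\{\psi_{-}\neq0\}$ cover $\mathbb{T}$.

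\emph{Step 3: global regularity.} To avoid dividing by small quantities, I would combine the two formulas into a single global one:
\begin{equation*}
(E-V(x))\bigl(|\psi_{+}(x)|^{2}+|\psi_{-}(x)|^{2}\bigr)=\overline{\psi_{+}(x)}\,\Phi_{+}(x)+\overline{\psi_{-}(x)}\,\Phi_{-}(x),
\end{equation*}
where $\Phi_{\pm}$ denote the shifted combinations appearing in \eqref{psi+} and \eqref{psi-}. The right-hand side is a sum of products of $C^{r}$ functions, so it is $C^{r}$ because $C^{r}$ is an algebra. The weight $|\psi_{+}|^{2}+|\psi_{-}|^{2}$ is $C^{r}$ and, by Step 2 and compactness, bounded below by a positive constant. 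Since $t\mapsto1/t$ is smooth on $[c,\infty)$, its reciprocal is $C^{r}$. Hence $V\in C^{r}(\mathbb{T},\mathbb{C})$. Since $V$ is real-valued, $V\in C^{r}(\mathbb{T},\mathbb{R})$.
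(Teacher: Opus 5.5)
Your argument is correct. Its key step is the same as the paper's: $W=\det U$ is invariant under $x\mapsto x+\alpha$, hence a nonzero constant $W_0$. You get the invariance by cross-multiplying the two equations. The paper instead writes them as the matrix identity $S_E^V(x)U(x)=U(x+\alpha)D$ with $D=\operatorname{diag}(z,z^{-1})$, reads the invariance off from $\det S_E^V=\det D=1$, and leaves implicit the minimality argument you spell out.

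The two routes differ in how $V$ is extracted. The paper inverts $U$. Because $\det U\equiv W_0$ is a nonzero \emph{constant}, the entries of $U^{-1}$ are $C^r$ without dividing by a variable function. Hence $S_E^V=U(\cdot+\alpha)DU^{-1}\in C^r$ and $V=E-(S_E^V)_{11}$; explicitly, $E-V(x)=W_0^{-1}\bigl(z^{2}\psi_{+}(x+\alpha)\psi_{-}(x-\alpha)-z^{-2}\psi_{-}(x+\alpha)\psi_{+}(x-\alpha)\bigr)$.

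You use $W_0\neq0$ only to exclude common zeros of $\psi_\pm$, and then divide by the positive weight $|\psi_+|^2+|\psi_-|^2$. This needs the standard fact that the reciprocal of a $C^r$ function bounded away from zero is $C^r$.

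The paper's version is shorter and gives a formula with a constant denominator. Yours shows that all that is really needed is a family of $C^r$ solutions with no common zero; a single nowhere-vanishing $C^r$ solution of the first equation would already suffice. In your version the determinant hypothesis serves only to guarantee this.
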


\begin{proof}
    Let 
    \begin{equation*}
        D=
        \begin{pmatrix}
            z&0\\
            0&z^{-1}
        \end{pmatrix}.
    \end{equation*}
    Equations \eqref{psi+} and \eqref{psi-} imply that the Schr\"odinger cocycle satisfies
    \begin{equation*}
        S_{E}^{V}(x)U(x)=U(x+\alpha)D.
    \end{equation*}
    Since $\det S_{E}^{V}(x)=\det D=1$, one has $\det U(x+\alpha)=\det U(x)$. Thus $\det U$ is constant. The invertibility assumption guarantees that this constant is nonzero, yielding $U^{-1}\in C^{r}(\mathbb{T},\mathrm{GL}(2,\mathbb{C}))$. Therefore,
    \begin{equation*}
        S_{E}^{V}(x)=U(x+\alpha)DU(x)^{-1}\in C^{r}(\mathbb{T},\mathrm{SL}(2,\mathbb{C})).
    \end{equation*}
    Since $V(x)=E-\big(S_{E}^{V}(x)\big)_{11}$, it follows that $V\in C^{r}(\mathbb{T},\mathbb{R})$.
\end{proof}

\subsubsection{Pure point spectrum of Aubry dual operator}

\begin{proof}[Proof of {\hyperref[cor:intro-dual]{Corollary~\ref*{cor:intro-dual}}}]
    Set $\sigma_{*}=\frac{\sigma_{0}}{2}$ and let $\mathrm{d}N_{\alpha,V}$ be the density of states measure. Let $\mathscr{E}\subseteq\Sigma_{\alpha,V}$ be the set of energies for which $(\alpha,S_{E}^{V})$ is reducible to an elliptic constant by a conjugacy in $\mathcal{A}_{\sigma_{*}}^{\ell}$. By \hyperref[thm:intro-schrodinger]{Theorem~\ref*{thm:intro-schrodinger}} and the construction in \hyperref[KAM]{Theorem~\ref*{KAM}}, one has $\mu_{x,\alpha,V}(\Sigma_{\alpha,V}\setminus\mathscr{E})=0$ for every $x\in\mathbb{T}$. It follows that
    \begin{equation}\label{DSzero}
        N_{\alpha,V}(\Sigma_{\alpha,V}\setminus\mathscr{E})=\frac{1}{2}\int_{\mathbb{T}}\mu_{x,\alpha,V}(\Sigma_{\alpha,V}\setminus\mathscr{E})\,\mathrm{d}x=0.
    \end{equation}

    Fix any $E\in\mathscr{E}$. By \hyperref[thm:intro-schrodinger]{Theorem~\ref*{thm:intro-schrodinger}}, there exist $B_{E}\in \mathcal{A}_{\sigma_{*}}(2\mathbb{T}, \mathrm{SL}(2,\mathbb{R}))$ and $\phi_{E}\in \mathbb{T}$ such that 
    \begin{equation*}
        B_{E}(x+\alpha)^{-1}S_{E}^{V}(x)B_{E}(x)=R_{\phi_{E}}\coloneqq M^{-1}
        \begin{pmatrix}
            e^{2\pi i \phi_{E}}&0\\
            0&e^{-2\pi i \phi_{E}}
        \end{pmatrix}
        M.
    \end{equation*}
    Let $m_{E}=\deg B_{E}$. Note that $B_{E}(x+1)=(-1)^{m_{E}}B_{E}(x)$, one has $C_{E}(x)\coloneqq B_{E}(x)R_{-m_{E}x/2}\in \mathcal{A}_{\sigma_{*}}(\mathbb{T},\mathrm{SL}(2,\mathbb{R}))$ and $\deg C_{E}=0$. Moreover, $\phi_{E}= \varrho(E)-m_{E}\alpha/2 \bmod{\mathbb{Z}}$. Consequently,
    \begin{equation*}
        C_{E}(x+\alpha)^{-1}S_{E}^{V}(x)C_{E}(x) =R_{\varrho(E)}.
    \end{equation*}
    Let $\widetilde{C}_{E}=C_{E}M^{-1}\in\mathcal{A}_{\sigma_{*}}^{\ell}(\mathbb{T},\mathrm{SL}(2,\mathbb{C}))$. Then
    \begin{equation}\label{eq:degree-zero-reduction}
        \widetilde{C}_{E}(x+\alpha)^{-1}S_{E}^{V}(x)\widetilde{C}_{E}(x)=
        \begin{pmatrix}
            e^{2\pi i\varrho(E)}&0\\
            0&e^{-2\pi i\varrho(E)}
        \end{pmatrix},
    \end{equation}
    and
    \begin{equation*}
        \|\widetilde{C}_{E}\|_{L^{2}}\leqslant \|\widetilde{C}_{E}\|_{C^{0}}\leqslant \|\widetilde{C}_{E}\|_{\sigma_{*}}<\infty.
    \end{equation*}

    Let $\psi_{E}(x)=\big(\widetilde{C}_{E}(x)\big)_{11}$. Expanding the first column of \eqref{eq:degree-zero-reduction} gives
    \begin{equation}\label{eq:dual-bloch}
        e^{2\pi i\varrho(E)}\psi_{E}(x+\alpha)+e^{-2\pi i\varrho(E)}\psi_{E}(x-\alpha)+V(x)\psi_{E}(x)=E\psi_{E}(x).
    \end{equation}
    Write
    \begin{equation*}
        \psi_{E}(x)=\sum_{n\in\mathbb{Z}}u_{E}(n)e^{2\pi i nx}.
    \end{equation*}
    Since $\psi_{E}\in\mathcal{A}_{\sigma_{*}}^{\ell}(\mathbb{T},\mathbb{C})$, for every $0<\sigma<\sigma_{*}$,
    \begin{equation}\label{eq:adapted-eigenvector}
        \sum_{n\in\mathbb{Z}}|u_{E}(n)|e^{\sigma\ell(n)}<\infty.
    \end{equation}
    Comparing the Fourier coefficients in \eqref{eq:dual-bloch} yields
    \begin{equation*}
        \widehat{H}_{\alpha,\varrho(E),V}u_{E}=Eu_{E}.
    \end{equation*}
    Thus, by \eqref{DSzero}, \eqref{eq:degree-zero-reduction}, and \eqref{eq:adapted-eigenvector}, for $N_{\alpha,V}$-almost every $E\in\Sigma_{\alpha,V}$, the cocycle $(\alpha,S_{E}^{V})$ admits an $L^{2}$ degree zero reduction preserving its fibered rotation number. Therefore, \cite[Theorem 3.1]{MR3512893} implies that there exists a full Lebesgue measure set $\Theta\subseteq\mathbb{T}$ such that $\widehat{H}_{\alpha,\theta,V}$ has pure point spectrum for every $\theta\in\Theta$. 

    Hence, for every $\theta\in\Theta$, there exists an orthonormal eigenbasis $\{u^{(j)}\}_{j\geqslant 1}$ such that, by \eqref{eq:adapted-eigenvector},
    \begin{equation*}
        \sum_{n\in\mathbb{Z}}|u_{n}^{(j)}|e^{\sigma\ell(n)}<\infty
    \end{equation*}
    for every $j\geqslant 1$ and every $0<\sigma<\sigma_{*}$.

    We now prove the last assertion. Let $V\notin C^{r}(\mathbb{T},\mathbb{R})$ for some $0<r<1$. Define
    \begin{equation*}
        \Theta_{0}=\Theta\cap \{\theta\in\mathbb{T}: 2\theta\notin \alpha\mathbb{Z}\oplus\mathbb{Z}\},
    \end{equation*}
    which also has full Lebesgue measure. Fix any $\theta\in\Theta_{0}$ and an eigenfunction $u=u^{(j)}$. Suppose that for some $s>r$,
    \begin{equation*}
        \sum_{n\in\mathbb{Z}} (1+|n|)^{s}|u_{n}| < \infty.
    \end{equation*}
    Then
    \begin{equation*}
        \psi(x)\coloneqq \sum_{n\in\mathbb{Z}}u_{n}e^{2\pi i nx}\in C^{r}(\mathbb{T},\mathbb{C}).
    \end{equation*}
    Aubry duality implies
    \begin{equation*}
        e^{2\pi i\theta}\psi(x+\alpha)+e^{-2\pi i\theta}\psi(x-\alpha)+V(x)\psi(x)=E\psi(x),
    \end{equation*}
    and
    \begin{equation*}
        e^{-2\pi i\theta}\overline{\psi(x+\alpha)}+e^{2\pi i\theta}\overline{\psi(x-\alpha)}+V(x)\overline{\psi(x)}=E\overline{\psi(x)}.
    \end{equation*}
    
    Define $U(x)= \big(U_{+}(x),U_{-}(x)\big)$, where
    \begin{equation*}
        U_{+}(x)=
        \begin{pmatrix}
            \psi(x)\\
            e^{-2\pi i\theta} \psi(x-\alpha)
        \end{pmatrix},
        \qquad 
        U_{-}(x)=
        \begin{pmatrix}
            \overline{\psi(x)}\\
            e^{2\pi i\theta} \overline{\psi(x-\alpha)}
        \end{pmatrix}.
    \end{equation*}
    We claim that $\det U\not\equiv 0$. Otherwise, since $U_{+},U_{-}\not\equiv 0$, there exists a nonzero $c\in C^{0}(\mathbb{T},\mathbb{C})$ such that $U_{-}(x)=c(x)U_{+}(x)$. From
    \begin{equation*}
        S_{E}^{V}(x)U_{-}(x)=c(x)e^{2\pi i\theta} U_{+}(x+\alpha)
    \end{equation*}
    and
    \begin{equation*}
        S_{E}^{V}(x)U_{-}(x)=e^{-2\pi i\theta} U_{-}(x+\alpha)=e^{-2\pi i\theta} c(x+\alpha) U_{+}(x+\alpha),
    \end{equation*}
    we obtain
    \begin{equation*}
        c(x+\alpha)=e^{4\pi i\theta} c(x).
    \end{equation*}
    Comparing Fourier coefficients yields $e^{2\pi i k\alpha}\hat{c}(k)=e^{4\pi i\theta} \hat{c}(k)$ for all $k\in\mathbb{Z}$, which implies $2\theta\equiv k\alpha \bmod{\mathbb{Z}}$. This contradicts $\theta\in \Theta_{0}$. Thus $\det U\not\equiv 0$. Therefore, by \hyperref[prop:bloch-regularity]{Proposition~\ref*{prop:bloch-regularity}}, this implies $V\in C^{r}(\mathbb{T},\mathbb{R})$, contradicting the  assumption $V\notin C^{r}(\mathbb{T},\mathbb{R})$.

    Finally, if $V$ belongs to no positive H\"older class, then for any $s>0$ one may choose $0<r<\min\{s,1\}$ and apply the preceding conclusion.
\end{proof}

\subsubsection{Weierstrass potentials}

For the Weierstrass potential $V(x)=\lambda W_{a,b}(x)$, the dual operator in \eqref{eq:dual} takes the form
\begin{equation}\label{eq:dual-weierstrass}
    [\widehat{H}_{\alpha,\theta,\lambda W_{a,b}}u]_{n}=2\cos 2\pi(\theta+n\alpha)u_{n}+\frac{\lambda}{2}\sum_{m\geqslant 0}a^{m}(u_{n-b^{m}}+u_{n+b^{m}}).
\end{equation}
If $\lambda\neq 0$, the hopping coefficients satisfy
\begin{equation*}
    \widehat{V}(\pm b^{m})=\frac{\lambda}{2}a^{m}, \qquad m\geqslant 0.
\end{equation*}
Consequently, for every $r\geqslant 0$,
\begin{equation*}
    \sum_{k\in\mathbb{Z}}(1+|k|)^{r}|\widehat{V}(k)| \asymp |\lambda|\sum_{m\geqslant 0}(ab^{r})^{m},
\end{equation*}
and hence
\begin{equation*}
    \sum_{k\in\mathbb{Z}}(1+|k|)^{r}|\widehat{V}(k)|<\infty \quad\Longleftrightarrow\quad r<\beta,
\end{equation*}
where $\beta=-\frac{\log a}{\log b}\in(0,1)$ is the H\"older exponent. In particular, the hopping is absolutely summable but has no finite first moment. The adapted localization conclusion should not be confused with localization in the Euclidean lattice distance. Since $\ell_{b}(b^{m})\leqslant m+1$,
\begin{equation*}
    e^{\sigma\ell_{b}(b^{m})}\leqslant e^{\sigma}|b^{m}|^{\sigma/\log b}.
\end{equation*}
Thus the adapted exponential weight grows only polynomially along the hopping sites $b^{m}$. The divergence statement in \hyperref[cor:intro-weierstrass]{Corollary~\ref*{cor:intro-weierstrass}} shows that this distinction is genuine.

\begin{proof}[Proof of {\hyperref[cor:intro-weierstrass]{Corollary~\ref*{cor:intro-weierstrass}}}]
    Choose $\gamma>0$ such that $\alpha\in\mathrm{DC}_{b}(\gamma,\tau)$ and fix $\sigma_{0}=-\frac{1}{2}\log a$.
    By \hyperref[prop:W]{Proposition~\ref*{prop:W}}, one has $W_{a,b}\in\mathcal{A}_{\sigma_{0}}^{(b)}(\mathbb{T},\mathbb{R})$.
    Let $\varepsilon_{*}$ be the smallness constant in \hyperref[thm:intro-schrodinger]{Theorem~\ref*{thm:intro-schrodinger}} and choose
    \begin{equation*}
        \lambda_{0} = \frac{\varepsilon_{*}}{\|W_{a,b}\|_{\sigma_{0}}}.
    \end{equation*}
    Then $\|\lambda W_{a,b}\|_{\sigma_{0}}<\varepsilon_{*}$
    whenever $|\lambda|<\lambda_{0}$. The spectral conclusions of $H_{\alpha,x,\lambda W_{a,b}}$ follow from \hyperref[thm:intro-schrodinger]{Theorem~\ref*{thm:intro-schrodinger}}, while the adapted localization conclusion for the dual operator $\widehat{H}_{\alpha,\theta,\lambda W_{a,b}}$ follows from \hyperref[cor:intro-dual]{Corollary~\ref*{cor:intro-dual}}.

    Assume now that $\lambda\neq 0$ and let $s>\beta$. Choose  $\beta<r<\min\{s,1\}$.
    Since
    \begin{equation*}
        (b^{m})^{r} \big|\lambda\widehat{ W}_{a,b}(b^{m})\big| = \frac{|\lambda|}{2} (ab^{r})^{m} \to \infty,\quad \text{as } m\to\infty,
    \end{equation*}
    one has $\lambda W_{a,b}\notin C^{r}(\mathbb{T},\mathbb{R})$.
    Therefore, by \hyperref[cor:intro-dual]{Corollary~\ref*{cor:intro-dual}} again, no eigenfunction  can satisfy
    \begin{equation*}
        \sum_{n\in\mathbb{Z}} (1+|n|)^{s}|u_{n}|<\infty.
    \end{equation*}
    This proves the last assertion.
\end{proof}

\section{Cantor spectrum via the Moser--P\"oschel construction}

\subsection{Moser--P\"oschel construction}
We first provide an expansion for perturbation of parabolic constant.
\begin{lemma}\label{paraperturbation}
    Let $\bar{A}=\begin{pmatrix}
        1&\zeta\\
        0&1
    \end{pmatrix}$ and $D_{0}=\begin{pmatrix}
        0&\zeta\\
        0&0
    \end{pmatrix}$ with $\zeta\in \mathbb{R}$. Let $\alpha\in\mathrm{DC}(\gamma,\tau)$ and $P\in\mathcal{A}_{\sigma}(\mathbb{T},\mathrm{sl}(2,\mathbb{R}))$. For any $0<\sigma'<\sigma$, there exist $Y\in\mathcal{A}_{\sigma'}(\mathbb{T},\mathrm{sl}(2,\mathbb{R}))$, $D_{1}\in\mathrm{sl}(2,\mathbb{R})$, and $R_{t}\in\mathcal{A}_{\sigma'}(\mathbb{T},\mathrm{sl}(2,\mathbb{R}))$ such that, for every sufficiently small $|t|\leqslant t_{0}$,
    \begin{equation*}
        e^{-t Y(x+\alpha)} (\bar{A}e^{tP(x)})e^{tY(x)}=\exp \big(D_{0}+t D_{1}+t^{2} R_{t}(x)\big),
    \end{equation*}
    where 
    \begin{equation*}
        \begin{split}
            &\|Y\|_{\sigma'}
        \leqslant
        C(\zeta)\gamma^{-3}
        \exp\Big(
            C(\rho,\tau)(\sigma-\sigma')^{-\frac{\rho}{1-\rho}}
        \Big)
        \|P\|_{\sigma},\\
        &\sup_{|t|\leqslant t_{0}}\|R_{t}\|_{\sigma'}
        \leqslant
        C(\zeta,\gamma,\rho,\tau,\sigma,\sigma')\|P\|_{\sigma}^{2}.
        \end{split}
    \end{equation*}
\end{lemma}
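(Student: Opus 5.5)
The plan is a first-order perturbation argument around the parabolic constant. Write $\bar A=e^{D_0}$, and look for $Y$ solving a linearized cohomological equation so that the $O(t)$ part of the conjugated cocycle becomes a constant $D_1$. Expanding,
\begin{equation*}
e^{-tY(x+\alpha)}\bar A e^{tP(x)}e^{tY(x)}=\bar A\big(\mathrm{I}+t(P(x)+Y(x)-\bar A^{-1}Y(x+\alpha)\bar A)+O(t^{2})\big),
\end{equation*}
so the first-order term is $P-L_{\bar A}Y$ with $L_{\bar A}$ as in the resonant decomposition subsection. Since $\bar A$ is unipotent, I would not invert $L_{\bar A}$ on all of $\mathrm{sl}(2,\mathbb R)$ at once. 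Instead I would work in the basis $\{E_{12},H,E_{21}\}$, where $\mathrm{Ad}_{\bar A^{-1}}$ acts as a unipotent upper triangular map $\mathrm{I}+\mathcal N$ with $\mathcal N^{3}=0$ and entries polynomial in $\zeta$. Mode by mode, for $n\neq0$, the equation $(e^{2\pi in\alpha}(\mathrm{I}+\mathcal N)-\mathrm{I})\widehat Y(n)=\widehat P(n)$ is solved by back substitution. This uses at most three divisions by $e^{2\pi in\alpha}-1$, so the cost is $C(\zeta)\|n\alpha\|_{\mathbb T}^{-3}\le C(\zeta)\gamma^{-3}e^{3\tau\ell(n)^{\rho}}$ by $\alpha\in\mathrm{DC}(\gamma,\tau)$. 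This is the source of the $\gamma^{-3}$.

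For $n=0$ the operator $\mathrm{Ad}_{\bar A^{-1}}-\mathrm{I}=\mathcal N$ is not invertible. I would solve only the part of $\widehat P(0)$ lying in the range of $\mathcal N$, choosing $\widehat Y(0)$ with $\|\widehat Y(0)\|\le C(\zeta)\|\widehat P(0)\|$ when $\zeta\neq0$. The remaining constant part, together with the constant correction coming from rewriting $\bar A(\mathrm I+tG)$ as $\exp(D_0+tD_1+\dots)$ via the BCH formula, defines $D_1$. If $\zeta=0$ one simply takes $\widehat Y(0)=0$ and $D_1=\widehat P(0)$. Reality of $Y$ follows from the symmetry $\widehat Y(-n)=\overline{\widehat Y(n)}$ of the construction.

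Next comes the width-loss estimate, summing $e^{\sigma'\ell(n)}$ against $e^{\sigma\ell(n)}$:
\begin{equation*}
\|Y\|_{\sigma'}\le C(\zeta)\gamma^{-3}\sup_{n}\exp\big(3\tau\ell(n)^{\rho}-(\sigma-\sigma')\ell(n)\big)\|P\|_{\sigma}.
\end{equation*}
Maximizing $3\tau u^{\rho}-\delta u$ over $u\ge0$ gives $C(\rho,\tau)\delta^{-\rho/(1-\rho)}$, which is exactly the claimed bound. The ball growth \eqref{eq:abstract-ball} is not even needed here, since we take a supremum rather than a sum.

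Finally, for the remainder, $\log$ of the conjugated product is an analytic function of $t$ with values in the Banach algebra $\mathcal A_{\sigma'}$ (Proposition~\ref{banachalgebra}) for $|t|\le t_0$ small compared to $\|Y\|_{\sigma'}^{-1}$ and $\|P\|_\sigma^{-1}$. Its value at $t=0$ is $D_0$ and its $t$-derivative at $0$ is $D_1$ by construction, so Taylor's formula with the Cauchy estimate gives $t^2R_t$ with $\sup_{|t|\le t_0}\|R_t\|_{\sigma'}\lesssim(\|Y\|_{\sigma'}+\|P\|_\sigma)^2$, which is $\lesssim C\|P\|_\sigma^2$ after inserting the bound on $Y$. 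The main obstacle I expect is the zero mode and the BCH bookkeeping around the non-semisimple $D_0$. The issue is making the identification $\exp(D_0+tD_1+t^2R_t)$ hold exactly, so that the first-order term of $\log(e^{D_0}e^{tG})$ is $d\exp^{-1}_{D_0}(G)$ rather than $G$. I would handle this by defining $D_1$ directly as the constant part of $d\exp_{D_0}^{-1}$ applied to the first-order term, which is legitimate since $\mathrm{ad}_{D_0}$ is nilpotent and $d\exp_{D_0}$ is invertible. Equivalently, I would solve the cohomological equation for $d\exp^{-1}_{D_0}$ of the first-order term instead of the term itself.
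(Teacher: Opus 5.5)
Your proposal is correct and follows essentially the paper's route: mode-by-mode inversion of $L_{\bar A}$ with the $C(\zeta)\|n\alpha\|_{\mathbb T}^{-3}$ loss, the supremum of $3\tau r^{\rho}-(\sigma-\sigma')r$ to control the width loss, and $D_1=d\exp_{D_0}^{-1}$ applied to the averaged first-order term via BCH. Your Cauchy-estimate treatment of $R_t$ is an equivalent packaging of the paper's two-step BCH argument.

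The one real difference is the zero mode. The paper simply takes $\widehat Y(0)=0$, so that $\bar A^{-1}Y(x+\alpha)\bar A-Y(x)=P(x)-\widehat P(0)$ and $D_1=\widehat P(0)+\frac12[D_0,\widehat P(0)]+\frac1{12}[D_0,[D_0,\widehat P(0)]]$. This is exactly the formula that Proposition~\ref{MP} later relies on. Your partial normalization of $\widehat P(0)$ into the range of $\mathcal N$ is therefore unnecessary, though harmless for the lemma as stated.
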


\begin{proof}
    We first solve the linearized equation
    \begin{equation}\label{linear}
        \bar{A}^{-1}Y(x+\alpha)\bar{A}-Y(x)
        =
        P(x)-\widehat{P}(0),
    \end{equation}
    with $\widehat{Y}(0)=0$. Comparing the Fourier coefficients on both sides, a direct calculation gives, for every $n\neq0$,
    \begin{equation*}
        \|\widehat{Y}(n)\|
        \leqslant
        C(\zeta)\|n\alpha\|_{\mathbb{T}}^{-3}\|\widehat{P}(n)\|.
    \end{equation*}
    Since $\alpha\in\mathrm{DC}(\gamma,\tau)$,
    \begin{equation*}
        \|\widehat{Y}(n)\|
        \leqslant
        C(\zeta)\gamma^{-3}
        \exp\big(3\tau\ell(n)^{\rho}\big)
        \|\widehat{P}(n)\|.
    \end{equation*}
    Therefore,
    \begin{equation*}
        \begin{split}
            \|Y\|_{\sigma'}
            &\leqslant
            C(\zeta)\gamma^{-3}
            \sum_{n\neq0}
            \|\widehat{P}(n)\|
            \exp\big(\sigma'\ell(n)+3\tau\ell(n)^{\rho}\big)\\
            &\leqslant
            C(\zeta)\gamma^{-3}
            \|P\|_{\sigma}
            \sup_{r\geqslant0}
            \exp\big(-(\sigma-\sigma')r+3\tau r^{\rho}\big).
        \end{split}
    \end{equation*}
    Since
    \begin{equation*}
        \sup_{r\geqslant0}
        \big(-(\sigma-\sigma')r+3\tau r^{\rho}\big)
        \leqslant
        C(\rho,\tau)(\sigma-\sigma')^{-\frac{\rho}{1-\rho}},
    \end{equation*}
    we obtain
    \begin{equation}\label{Yestimate}
        \|Y\|_{\sigma'}
        \leqslant
        C(\zeta)\gamma^{-3}
        \exp\Big(
            C(\rho,\tau)(\sigma-\sigma')^{-\frac{\rho}{1-\rho}}
        \Big)
        \|P\|_{\sigma}.
    \end{equation}

    Since $e^{D_{0}}=\bar{A}$, equation \eqref{linear} gives
    \begin{equation*}
        \begin{split}
            e^{-tY(x+\alpha)}\bar{A}e^{tP(x)}e^{tY(x)}
            &=
            \bar{A}
            e^{-t\bar{A}^{-1}Y(x+\alpha)\bar{A}}
            e^{tP(x)}
            e^{tY(x)}\\
            &=
            e^{D_{0}}
            e^{-t(Y(x)+P(x)-\widehat{P}(0))}
            e^{tP(x)}
            e^{tY(x)}.
        \end{split}
    \end{equation*}
    By the BCH formula, there exists a uniformly bounded family $Z_{t}\in\mathcal{A}_{\sigma'}(\mathbb{T},\mathrm{sl}(2,\mathbb{R}))$ such that
    \begin{equation*}
        \log\big(
            e^{-t(Y(x)+P(x)-\widehat{P}(0))}
            e^{tP(x)}
            e^{tY(x)}
        \big)
        =
        t\widehat{P}(0)+t^{2}Z_{t}(x),
    \end{equation*}
    with
    \begin{equation*}
        \sup_{|t|\leqslant t_{0}}\|Z_{t}\|_{\sigma'}
        \leqslant
        C\big(\|Y\|_{\sigma'}+\|P\|_{\sigma'}\big)^{2}.
    \end{equation*}
    Applying the local BCH formula at $D_{0}$ and using $[D_{0},[D_{0},[D_{0},D]]]=0$ for every $D\in\mathrm{sl}(2,\mathbb{R})$, we obtain
    \begin{equation*}
        \log\big(
            e^{D_{0}}e^{t\widehat{P}(0)+t^{2}Z_{t}(x)}
        \big)
        =
        D_{0}+tD_{1}+t^{2}R_{t}(x),
    \end{equation*}
    where
    \begin{equation*}
        D_{1}
        =
        \widehat{P}(0)
        +
        \frac{1}{2}[D_{0},\widehat{P}(0)]
        +
        \frac{1}{12}[D_{0},[D_{0},\widehat{P}(0)]]
        \in\mathrm{sl}(2,\mathbb{R}),
    \end{equation*}
    and
    \begin{equation*}
        \sup_{|t|\leqslant t_{0}}\|R_{t}\|_{\sigma'}
        \leqslant
        C(\zeta)\big(\|Y\|_{\sigma'}+\|P\|_{\sigma'}\big)^{2}.
    \end{equation*}
    The estimate for $R_{t}$ now follows from \eqref{Yestimate}.
\end{proof}

We give a criterion for uniform hyperbolicity. Denote $[f]\coloneqq\widehat{f}(0)$ for simplicity. 

\begin{proposition}\label{MP}
    Under the assumptions of \hyperref[SOKAM]{Theorem~\ref*{SOKAM}}, let $W\in\mathcal{A}_{\sigma}(\mathbb{T},\mathbb{R})$, $0<\sigma'<\sigma$, and assume that there exist
    \begin{equation*}
        B(x)=
        \begin{pmatrix}
            b_{11}(x)&b_{12}(x)\\
            b_{21}(x)&b_{22}(x)
        \end{pmatrix}
        \in
        \mathcal{A}_{\sigma'}(2\mathbb{T},\mathrm{SL}(2,\mathbb{R})),
    \end{equation*}
    $\chi\in\{\pm1\}$, and $\zeta\in\mathbb{R}$ such that
    \begin{equation*}
        B(x+\alpha)^{-1}S_{E}^{V}(x)B(x)=\chi \begin{pmatrix}
            1&\zeta\\
            0&1
        \end{pmatrix}.
    \end{equation*}
    Then for sufficiently small $|t|>0$, the following holds.
    \begin{enumerate}
        \item \label{item:eq0} If $\zeta\neq 0$ and $[Wb_{11}^{2}]\neq 0$, then $(\alpha, S_{E}^{V+tW})\in\mathcal{UH}$ if  $t\zeta[Wb_{11}^{2}]>0$;

        \item \label{item:neq0}If $\zeta =0$ and $-[W b_{11}b_{12}]^{2}+[Wb_{12}^{2}] [Wb_{11}^{2}]<0$, then $(\alpha, S_{E}^{V+tW})\in\mathcal{UH}$.
    \end{enumerate}
\end{proposition}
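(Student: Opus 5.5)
The plan is to conjugate the perturbed cocycle by the given $B$, reduce to a small perturbation of the parabolic constant $\chi\bar A$ with $\bar A=\begin{pmatrix}1&\zeta\\0&1\end{pmatrix}$, apply \hyperref[paraperturbation]{Lemma~\ref*{paraperturbation}} to average the perturbation to second order, and then read off hyperbolicity from the constant part $D_{0}+tD_{1}$.

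First, since $S_{E}^{V+tW}(x)=S_{E}^{V}(x)\,e^{tF_{W}(x)}$ with $F_{W}=\begin{pmatrix}0&0\\-W&0\end{pmatrix}$ (nilpotent, so the exponential is exact), we get
\begin{equation*}
B(x+\alpha)^{-1}S_{E}^{V+tW}(x)B(x)=\chi\bar A\,e^{tP(x)},\qquad P(x)=B(x)^{-1}F_{W}(x)B(x).
\end{equation*}
Because $B(x+1)=\pm B(x)$, the matrix $P$ is one-periodic and belongs to $\mathcal{A}_{\sigma'}(\mathbb{T},\mathrm{sl}(2,\mathbb{R}))$. A direct computation, using $\det B=1$, gives
\begin{equation*}
P=-W\begin{pmatrix}-b_{11}b_{12}&-b_{12}^{2}\\ b_{11}^{2}&b_{11}b_{12}\end{pmatrix},
\end{equation*}
so that $[P]=\begin{pmatrix}[Wb_{11}b_{12}]&[Wb_{12}^{2}]\\-[Wb_{11}^{2}]&-[Wb_{11}b_{12}]\end{pmatrix}$. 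The sign $\chi$ plays no role, since hyperbolicity of $(\alpha,\chi M)$ is equivalent to that of $(\alpha,M)$.

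Next, I will apply \hyperref[paraperturbation]{Lemma~\ref*{paraperturbation}} with widths $\sigma''<\sigma'$. This yields a conjugacy $e^{tY}$ that is $C^{0}$-close to the identity and reduces the cocycle to $\exp(D_{0}+tD_{1}+t^{2}R_{t})$, with $R_{t}$ uniformly bounded. Hyperbolicity is then decided by the determinant $\det(D_{0}+tD_{1})=-\tfrac12\operatorname{tr}\big((D_{0}+tD_{1})^{2}\big)$: if this equals $-c$ with $c>0$, the constant $\exp(D_{0}+tD_{1})$ is hyperbolic with Lyapunov exponent $\sqrt{c}$. Since the brackets in $D_{1}$ have vanishing trace against $D_{0}$, we have $\operatorname{tr}(D_{0}D_{1})=\operatorname{tr}(D_{0}[P])=-\zeta[Wb_{11}^{2}]$. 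Hence
\begin{equation*}
\det(D_{0}+tD_{1})=t\zeta[Wb_{11}^{2}]+O(t^{2}).
\end{equation*}
Two cases follow.
\begin{enumerate}
\item If $\zeta\neq0$, then $\det(D_{0}+tD_{1})=t\zeta[Wb_{11}^{2}]+O(t^{2})$, so hyperbolicity occurs exactly when the sign of this leading term gives a negative determinant. Here I expect to have to check the sign convention carefully: with the convention above the condition reads $t\zeta[Wb_{11}^{2}]<0$, and the paper's $F_{W}$ sign convention (the $-W$ in the lower-left entry) should flip this to match the stated condition $t\zeta[Wb_{11}^{2}]>0$.
\item If $\zeta=0$, then $D_{0}=0$ and $D_{1}=[P]$, so $\det(tD_{1})=t^{2}\det[P]=t^{2}\big(-[Wb_{11}b_{12}]^{2}+[Wb_{12}^{2}][Wb_{11}^{2}]\big)<0$. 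This holds for all small $t\neq0$, in agreement with the hypothesis of the second item.
\end{enumerate}

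The main obstacle is that the remainder $t^{2}R_{t}$ is comparable to the hyperbolicity rate in the first case, where the rate is only $\sqrt{|t|}$ while the error is $O(t^{2})$, and both are small. To handle this, I will diagonalize $D_{0}+tD_{1}$ with a constant conjugacy $Q_{t}$ whose norm is bounded by $|t|^{-1/4}$ in the first case and $O(1)$ in the second. The perturbation then becomes of size $O(t^{2}|t|^{-1/2})=o(\sqrt{|t|})$ in the first case and $O(t^{2})=o(|t|)$ in the second. I will then invoke the cone-field stability criterion \cite[Theorem 5.2]{MR4348670}, in the form already used in \eqref{cone}, to conclude uniform hyperbolicity. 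Uniform hyperbolicity is preserved under the continuous conjugacies $B$ and $e^{tY}$, which transfers the conclusion back to $(\alpha,S_{E}^{V+tW})$.
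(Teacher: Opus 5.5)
\textbf{The strategy matches the paper's, but the first step has a sign error that reverses item (1).} Your route is the paper's: conjugate by $B$ to $\chi\bar A e^{tP}$, apply Lemma~\ref{paraperturbation}, read off hyperbolicity from the sign of $\det(D_0+tD_1)$, and absorb $t^2R_t$ by a constant diagonalization. When $\zeta\neq0$ this uses $\|Q_t\|\lesssim|t|^{-1/4}$, which gives a remainder $O(|t|^{3/2})$ against a rate $\asymp|t|^{1/2}$; when $\zeta=0$ it uses $\|Q\|=O(1)$. Using $\operatorname{tr}(D_0[D_0,\cdot\,])=0$ to get the leading term of the determinant is a tidy shortcut for the paper's explicit computation of $D_1$. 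You noticed the sign mismatch but resolved it incorrectly, so as written your argument proves the opposite condition in item (1).

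\textbf{The sign error.} The factorization you start from is false:
\[
\begin{pmatrix}E-V&-1\\1&0\end{pmatrix}\begin{pmatrix}1&0\\-tW&1\end{pmatrix}=\begin{pmatrix}E-V+tW&-1\\1&0\end{pmatrix}=S_E^{V-tW}.
\]
So your $F_W$ describes $V-tW$, not $V+tW$. This is why your case (1) yields $t\zeta[Wb_{11}^2]<0$. There is no sign convention to appeal to: $S_E^V$ is fixed, and the correct factor is $F_W=\begin{pmatrix}0&0\\W&0\end{pmatrix}$, exactly as $F_V$ in the proof of Theorem~\ref{SOKAM}.

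With the correct factor:
\begin{itemize}
\item $P=W\begin{pmatrix}-b_{11}b_{12}&-b_{12}^2\\b_{11}^2&b_{11}b_{12}\end{pmatrix}$;
\item your trace argument gives $\operatorname{tr}(D_0D_1)=\operatorname{tr}(D_0[P])=\zeta[Wb_{11}^2]$;
\item hence $\det(D_0+tD_1)=-t\zeta[Wb_{11}^2]+O(t^2)$, which is negative exactly when $t\zeta[Wb_{11}^2]>0$, as stated.
\end{itemize}
Item (2) is unaffected, because $\det(-X)=\det X$ for $2\times2$ matrices. With this correction your proof is complete and coincides with the paper's.

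\textbf{A smaller point.} The paper also uses this implicitly. $B(x+1)=\pm B(x)$ is not among the hypotheses, but it follows from the reducibility equation. Set $G=B^{-1}B(\cdot+1)$ and let $C$ be the parabolic constant. Then $G(x+\alpha)=CG(x)C^{-1}$ and $G(x+1)=G(x)^{-1}$. The map $X\mapsto CXC^{-1}$ has only the eigenvalue $1$ and $\alpha$ is irrational, so comparing Fourier coefficients on $2\mathbb{T}$ shows that $G$ is constant. Then $G^2=\mathrm{I}$ in $\mathrm{SL}(2,\mathbb{R})$ forces $G=\pm\mathrm{I}$. This is what makes $P$ one-periodic, so that Lemma~\ref{paraperturbation} applies.
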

\begin{proof}
Write
\begin{equation*}
    S_{E}^{V+tW}(x)=S_{E}^{V}(x) \exp \begin{pmatrix}
        0&0\\
        tW(x)&0
    \end{pmatrix},\quad B(x)=\begin{pmatrix}
    b_{11}(x)&b_{12}(x)\\
    b_{21}(x)&b_{22}(x)
    \end{pmatrix}.
\end{equation*}
    By \hyperref[item:rational]{Corollary~\ref*{PE}\,\textup{(\ref*{item:rational})}}, there exists $\chi\in \{\pm 1\}$, such that
    \begin{equation*}
        B(x+\alpha)^{-1}S_{E}^{V+tW}(x)B(x)=\chi \begin{pmatrix}
        1&\zeta\\
        0&1
    \end{pmatrix}e^{t P(x)} =\chi \bar{A}e^{t P(x)},
    \end{equation*}
    where $P\in\mathcal{A}_{\sigma'}(\mathbb{T},\mathrm{sl}(2,\mathbb{R}))$ satisfies
    \begin{equation*}
        P(x)=B(x)^{-1}\begin{pmatrix}
            0&0\\
            W(x)&0
        \end{pmatrix} B(x)=W(x)\begin{pmatrix}
            -b_{11}(x)b_{12}(x)&-b_{12}(x)^{2}\\
            b_{11}(x)^{2}&b_{11}(x)b_{12}(x)
        \end{pmatrix}.
    \end{equation*}
    By \hyperref[paraperturbation]{Lemma~\ref*{paraperturbation}}, for any $0<\sigma''<\sigma'$, there exist $Y\in\mathcal{A}_{\sigma''}(\mathbb{T},\mathrm{sl}(2,\mathbb{R}))$, $D_{1}\in\mathrm{sl}(2,\mathbb{R})$, and uniformly bounded $R_{t}\in\mathcal{A}_{\sigma''}(\mathbb{T},\mathrm{sl}(2,\mathbb{R}))$ such that
    \begin{equation*}
        e^{-tY(x+\alpha)}\bar{A}e^{t P(x)} e^{tY(x)} =\exp(D_{0}+tD_{1}+t^{2}R_{t}(x)),
    \end{equation*}
    where 
    \begin{equation}\label{D1formula}
        D_{1}=\widehat{P}(0)+\frac{1}{2}[D_{0},\widehat{P}(0)]+\frac{1}{12}[D_{0},[D_{0},\widehat{P}(0)]].
    \end{equation}
    Let $B_{t}(x)=B(x)e^{tY(x)}$. Then we have
    \begin{equation*}
        B_{t}(x+\alpha)^{-1}S_{E}^{V+tW}(x)B_{t}(x)=\chi \exp(D_{0}+tD_{1}+t^{2}R_{t}(x)).
    \end{equation*}
    Denote $a=[Wb_{11}b_{12}]$, $b=[W b_{12}^{2}]$, $c=[W b_{11}^{2}]$. Then $\widehat{P}(0)=\begin{pmatrix}
        -a&-b\\
        c&a
    \end{pmatrix}$. 
    By \eqref{D1formula}, the direct calculation shows
    \begin{equation*}
        D_{1}=\begin{pmatrix}
            -a+\zeta c/2&-b+\zeta a-\zeta^{2}c/6\\
            c&a-\zeta c/2
        \end{pmatrix}.
    \end{equation*}

Now we consider two cases.

If $\zeta\neq 0$ and $c\neq 0$, then for sufficiently small $|t|>0$ and $t \zeta c>0$, 
    \begin{equation*}
        \det (D_{0}+tD_{1})=-t \zeta c+O(|t|^{2})<0,
    \end{equation*}
    which means that $D_{0}+tD_{1}$ has two real eigenvalues $\{\pm \lambda_{t}\}$ with $\lambda_{t}\asymp |t|^{1/2}$. By the hyperbolic diagonalization in \cite[Proposition 18]{MR2199393}, there exists $Q_{t}\in\mathrm{SL}(2,\mathbb{R})$ with $\|Q_{t}\|\leqslant C(\zeta)\lambda_{t}^{-1/2}$ such that
    \begin{equation*}
        Q_{t}^{-1} (D_{0}+tD_{1}+t^{2}R_{t})Q_{t}=\begin{pmatrix}
            \lambda_{t}&0\\
            0&-\lambda_{t}
        \end{pmatrix}+O(|t|^{3/2}).
    \end{equation*}
    Thus $(\alpha, S_{E}^{V+tW})\in\mathcal{UH}$. 

    If $\zeta=0$ and $-a^{2}+bc<0$, then $D_{0}=0$ and $D_{1}=\widehat{P}(0)=\begin{pmatrix}
        -a&-b\\
        c&a
    \end{pmatrix}$. The direct calculation shows
    \begin{equation*}
        \det(D_{0}+tD_{1})=t^{2}\det D_{1}= t^{2}(-a^{2}+bc)<0.
    \end{equation*}
    There exist $\lambda=\sqrt{a^{2}-bc}>0$ and $Q\in \mathrm{SL}(2,\mathbb{R})$ such that
    \begin{equation*}
        Q^{-1}(D_{0}+tD_{1}+t^{2}R_{t})Q=t\begin{pmatrix}
            \lambda&0\\
            0&-\lambda
        \end{pmatrix}+ O(|t|^{2}).
    \end{equation*}
    Thus $(\alpha, S_{E}^{V+tW})\in\mathcal{UH}$.
\end{proof}

\subsection{Cantor spectrum for H\"older continuous potential}
\begin{proof}[Proof of {\hyperref[cor:intro-cantor]{Corollary~\ref*{cor:intro-cantor}}}]
    Define a Banach space as
    \begin{equation*}
        \mathcal{X}_{\sigma_{0}}^{1}=C^{1}(\mathbb{T},\mathbb{R})\cap\mathcal{A}_{\sigma_{0}}^{(b)}(\mathbb{T},\mathbb{R}),
    \end{equation*}
    with norm
    \begin{equation*}
        \|g\|_{\mathcal{X}_{\sigma_{0}}^{1}}=\|g\|_{\sigma_{0}}+\|g\|_{C^{1}}\quad \text{for every }g\in \mathcal{X}_{\sigma_{0}}^{1}.
    \end{equation*}
 
    Let $\varepsilon_{*}$ be the smallness constant in \hyperref[SOKAM]{Theorem~\ref*{SOKAM}}. Since $\lambda$ is sufficiently small, choose $\delta_{*}=\frac{1}{2}(\varepsilon_{*}-\|\lambda W_{a,b}\|_{\sigma_{0}})$. For any $0<\delta<\delta_{*}$, define
    \begin{equation*}
        \mathcal{U}_{\delta}=\big\{g\in\mathcal{X}_{\sigma_{0}}^{1}: \|g\|_{\mathcal{X}_{\sigma_{0}}^{1}}<\delta\big\}.
    \end{equation*}
    Note that $\mathcal{U}_{\delta}$ is an open subset of the Banach space $\mathcal{X}_{\sigma_{0}}^{1}$. For every gap label $k\in\mathbb{Z}\setminus\{0\}$, denote by $G_{k}(V)$ the gap for label $k$ for the potential $V$. For every $k\in\mathbb{Z}\setminus\{0\}$, define
    \begin{equation*}
        \mathcal{O}_{k,\delta}=\big\{g\in\mathcal{U}_{\delta}: |G_{k}(\lambda W_{a,b}+g)|>0\big\}.
    \end{equation*}

    We first show that $\mathcal{O}_{k,\delta}$ is open in $\mathcal{U}_{\delta}$. Let $g_{0}\in \mathcal{O}_{k,\delta}$ and $V_{0}=\lambda W_{a,b}+g_{0}$ and fix any $E_{0}\in G_{k}(V_{0})$. Johnson's theorem implies $(\alpha,S_{E_{0}}^{V_{0}})\in\mathcal{UH}$. Since uniform hyperbolicity is an open condition in the $C^{0}$ topology, there exists $\eta>0$ such that if $\|g_{1}-g_{0}\|_{\mathcal{X}_{\sigma_{0}}^{1}}<\eta$, then $(\alpha, S_{E_{0}}^{V_{1}})\in\mathcal{UH}$ where $V_{1}=\lambda W_{a,b}+g_{1}$. Moreover, for $s\in [0,1]$, let $V_{s}=\lambda W_{a,b}+(1-s)g_{0}+sg_{1}$. By \hyperref[banachalgebra]{Proposition~\ref*{banachalgebra}}, we have
    \begin{equation*}
        \|V_{s}-V_{0}\|_{\mathcal{X}_{\sigma_{0}}^{1}}\leqslant s\|g_{1}-g_{0}\|_{\mathcal{X}_{\sigma_{0}}^{1}}<\eta,
    \end{equation*}
    which implies $(\alpha,S_{E_{0}}^{V_{s}})\in\mathcal{UH}$ for every $s\in [0,1]$. Thus, for every $s\in [0,1]$, there exists $m\in\mathbb{Z}$ such that $E_{0}\in G_{m}(V_{s})$ with $|G_{m}(V_{s})|>0$. By the gap-labeling theorem, $2\varrho(\alpha,S_{E_{0}}^{V_{s}}) \in \{m\alpha \bmod \mathbb{Z}: m\in\mathbb{Z}\}$. On the other hand, since the rotation number is continuous with respect to the cocycle, the map $s\mapsto 2\varrho(\alpha,S_{E_{0}}^{V_{s}})$ is continuous. This implies $2\varrho(\alpha,S_{E_{0}}^{V_{1}})=2\varrho(\alpha,S_{E_{0}}^{V_{0}})=k\alpha\bmod\mathbb{Z}$. Therefore, $E_{0}\in G_{k}(V_{1})$, and thus $g_{1}\in\mathcal{O}_{k,\delta}$. Hence, $\mathcal{O}_{k,\delta}$ is open.

    We next show that $\mathcal{O}_{k,\delta}$ is dense in $\mathcal{U}_{\delta}$. Fix $g_{0}\in\mathcal{U}_{\delta}$ and $\eta>0$, and let
    \begin{equation*}
        V_{0}=\lambda W_{a,b}+g_{0}.
    \end{equation*}
    If $g_{0}\in\mathcal{O}_{k,\delta}$, there is nothing to prove. Assume that $|G_{k}(V_{0})|=0$, and let $E$ be the collapsed gap energy satisfying
    \begin{equation*}
        2\varrho(\alpha,S_{E}^{V_{0}})=k\alpha\bmod\mathbb{Z}.
    \end{equation*}
    By \hyperref[SOKAM]{Theorem~\ref*{SOKAM}} and \hyperref[item:eq0]{Proposition~\ref*{MP}\,\textup{(\ref*{item:eq0})}}, there exists $B(x)=\begin{pmatrix}
            b_{11}(x)&b_{12}(x)\\
            b_{21}(x)&b_{22}(x)
        \end{pmatrix}$
    such that
    \begin{equation*}
        B(x+\alpha)^{-1}S_{E}^{V_{0}}(x)B(x)=\pm\mathrm{I}.
    \end{equation*}
    Denote
    \begin{equation*}
        y_{1}=\frac{b_{11}^{2}+b_{12}^{2}}{2},\qquad y_{2}=\frac{b_{11}^{2}-b_{12}^{2}}{2},\qquad y_{3}=b_{11}b_{12}.
    \end{equation*}
    Since $\det B=1$, we have $[y_{1}]>0$.  Since the real trigonometric polynomials are dense in $\mathcal{A}_{\sigma_{0}}^{(b)}$, there exists a real trigonometric polynomial $W_{0}$ such that
    \begin{equation}\label{C1genericor}
        -[W_{0}y_{1}][y_{2}]+[W_{0}y_{2}][y_{1}]\neq 0 \quad\text{or}\quad -[W_{0}y_{1}][y_{3}]+[W_{0}y_{3}][y_{1}]\neq 0.
    \end{equation}
    In particular, $W_{0}\in\mathcal{X}_{\sigma_{0}}^{1}$. Let
    \begin{equation*}
        e_{0}=\frac{[W_{0}y_{1}]}{[y_{1}]},\qquad W=W_{0}-e_{0}.
    \end{equation*}
    Then $[Wy_{1}]=0$, and \eqref{C1genericor} implies that
    \begin{equation*}
        [Wy_{2}]\neq 0 \quad\text{or}\quad [Wy_{3}]\neq 0.
    \end{equation*}
    Consequently,
    \begin{equation*}
        \begin{split}
            -[Wb_{11}b_{12}]^{2}+[Wb_{12}^{2}][Wb_{11}^{2}]&=[Wy_{1}]^{2}-[Wy_{2}]^{2}-[Wy_{3}]^{2}\\
            &=-[Wy_{2}]^{2}-[Wy_{3}]^{2}<0.
        \end{split}
    \end{equation*}
    By \hyperref[item:neq0]{Proposition~\ref*{MP}\,\textup{(\ref*{item:neq0})}}, one has $(\alpha,S_{E}^{V_{0}+tW})\in\mathcal{UH}$ for every sufficiently small $|t|>0$. Since $S_{E}^{V_{0}+tW}=S_{E+te_{0}}^{V_{0}+tW_{0}}$, it follows that $(\alpha,S_{E+te_{0}}^{V_{0}+tW_{0}})\in\mathcal{UH}$. By the continuity of the rotation number and the gap-labeling theorem, for every sufficiently small $|t|>0$, $2\varrho(\alpha,S_{E+te_{0}}^{V_{0}+tW_{0}})=k\alpha\bmod\mathbb{Z}$. Hence the gap with label $k$ is open for $V_{0}+tW_{0}$. Choosing $|t|>0$ sufficiently small such that
    \begin{equation*}
        |t|\|W_{0}\|_{\mathcal{X}_{\sigma_{0}}^{1}}<\min\{\eta,\,
        \delta-\|g_{0}\|_{\mathcal{X}_{\sigma_{0}}^{1}}\},\qquad \|g_{0}+tW_{0}\|_{\sigma_{0}}<\delta,
    \end{equation*}
    we obtain $g_{0}+tW_{0}\in\mathcal{O}_{k,\delta}$ and $\|g_{0}+tW_{0}-g_{0}\|_{\mathcal{X}_{\sigma_{0}}^{1}}<\eta$. Thus $\mathcal{O}_{k,\delta}$ is dense in $\mathcal{U}_{\delta}$.

    Define
    \begin{equation*}
        \mathcal{G}_{\delta}=\bigcap_{k\in\mathbb{Z}\setminus\{0\}}\mathcal{O}_{k,\delta}.
    \end{equation*}
    By the Baire category theorem, $\mathcal{G}_{\delta}$ is dense in $\mathcal{U}_{\delta}$. Choose $g\in\mathcal{G}_{\delta}$ and set $V=\lambda W_{a,b}+g$.
    Then $V\in\mathcal{A}_{\sigma_{0}}^{(b)}(\mathbb{T},\mathbb{R})$ and
    \begin{equation*}
        \|V-\lambda W_{a,b}\|_{\sigma_{0}}=\|g\|_{\sigma_{0}}<\delta,
    \end{equation*}
    and every bounded gap is open for $V$. Moreover, since $\Sigma_{\alpha,V}$ is compact and has empty interior, it is a Cantor set.

    Since $g\in C^{1}(\mathbb{T},\mathbb{R})$ and $\lambda\neq 0$, the function $V$ is nowhere differentiable. Since the Weierstrass function is H\"older continuous, $V$ is H\"older continuous with exponent $\beta=-\frac{\log a}{\log b}$. Finally, since
    \begin{equation*}
        \|V\|_{\sigma_{0}}\leqslant \|\lambda W_{a,b}\|_{\sigma_{0}}+\|g\|_{\sigma_{0}}<\varepsilon_{*},
    \end{equation*}
    by \hyperref[thm:intro-schrodinger]{Theorem~\ref*{thm:intro-schrodinger}}, $H_{x,\alpha,V}$ has purely absolutely continuous spectrum for every $x\in\mathbb{T}$.
\end{proof}

\section{Extensions and an all-frequency problem}\label{sec:scope}

The adapted Fourier framework is developed here for one-frequency $\mathrm{SL}(2,\mathbb{R})$ cocycles. We conclude with three natural directions and questions suggested by the present theory.

\subsection{Multi-frequencies and higher-dimensional cocycles}

At the level of the weighted Fourier estimates, the passage to several frequencies is formal, but a complete KAM theorem is not automatic. One replaces $\mathbb{Z}$ by $\mathbb{Z}^{d}$, chooses a proper symmetric subadditive length $\ell$ with power-subexponential ball growth, and assumes the matched arithmetic condition
\begin{equation*}
    \|\langle n,\alpha\rangle\|_{\mathbb{T}}
    \geqslant
    \gamma\exp\big(-\tau\ell(n)^{\rho}\big),
    \qquad
    n\in\mathbb{Z}^{d}\setminus\{0\}.
\end{equation*}
The Banach algebra estimates, Fourier truncation bounds, and linear homological estimates retain the same form. The main additional issue is the organization of lattice resonances in the nonlinear iteration. We do not pursue this extension here, since it would require substantial extra notation without changing the basic adapted Fourier mechanism.

The weighted estimates extend to matrix-valued functions in any fixed finite dimension. For cocycles taking values in a fixed finite-dimensional linear Lie group, one expects the same framework to combine with the usual normal form or block-resonance decompositions from perturbative analytic theory; see \cite{MR1168974,MR1858550,MR4942811}. The new work would lie in the finite-dimensional resonance and normal-form analysis. We restrict the paper to $\mathrm{SL}(2,\mathbb{R})$ in order to keep the main mechanism transparent.

\subsection{Quasiperiodic linear differential equations}
For quasiperiodic linear differential equations, the weighted algebra remains compatible with the nonlinear matrix operations, while the discrete homological operator is replaced by
\begin{equation*}
\omega\cdot\partial_{x}X-[A,X].
\end{equation*}
Accordingly, the small divisors are of the form $i\langle n,\omega\rangle+\lambda_{k}-\lambda_{j}$. The nonlinear algebraic operations cause no additional loss of Fourier radius. At the matrix level they are products and commutators, which preserve the adapted Fourier algebra at the same radius. Equivalently, in the Hamiltonian formulation of a linear system, the relevant Hamiltonians are quadratic in the fiber variables, and their Poisson bracket reduces to the corresponding matrix commutator. Thus
\begin{equation*}
\|\{F,G\}\|_{\sigma}\leqslant C\|F\|_{\sigma}\|G\|_{\sigma}
\end{equation*}
with no reduction of $\sigma$. The passage from $\sigma$ to a smaller radius $\sigma'<\sigma$ is required only when inverting the homological operator, in order to compensate for the adapted small divisors. These estimates show that the adapted Fourier scale is compatible with the continuous-time problem. A complete differential-equation theorem would still require its own normal-form and resonance analysis, as well as a separate iteration statement; it is not proved here. The absence of derivative loss in the nonlinear algebraic operations is specific to linear systems, or equivalently quadratic Hamiltonian systems. For general nonlinear Hamiltonians, the derivatives in the Poisson bracket normally cause an additional loss of radius.

\subsection{The all-frequency problem}

The main theorem is perturbative for each fixed adapted Diophantine class, and its smallness threshold depends on the corresponding arithmetic constant $\gamma$, as is natural in the multifrequency setting. In one-frequency case, however, a natural problem is to remove this dependence and obtain a threshold that is uniform over irrational frequencies. This would be the adapted Fourier-algebraic analogue of nonperturbative almost reducibility in the analytic one-frequency setting \cite{MR2969277,MR2846380,AvilaARC}.

\begin{problem}[All-frequency almost reducibility]
Fix an adapted Fourier algebra scale $\mathcal{A}^{\ell}$ satisfying the assumptions of the main theorem. Given
$A\in\mathrm{SL}(2,\mathbb{R})$ and $\sigma>0$, does there exist
\begin{equation*}
    \varepsilon_{*}=\varepsilon_{*}(A,\sigma)>0
\end{equation*}
such that, for every irrational $\alpha$ and every $F\in\mathcal{A}_{\sigma}^{\ell}(\mathbb{T},\mathrm{sl}(2,\mathbb{R}))$ satisfying $\|F\|_{\sigma,\ell}<\varepsilon_{*}$, there exists $\sigma_{*}=\sigma_{*}(\alpha,A,F)>0$ for which the cocycle $(\alpha,Ae^{F})$ is almost reducible in $\mathcal{A}_{\sigma_{*}}^{\ell}$?
\end{problem}

\section*{Acknowledgments}
Jiangong You is supported by NSFC grant (12531006, 12526201) and Nankai Zhide Foundation.

\bibliographystyle{alpha}
\bibliography{main}
\end{document}